\documentclass[12pt]{article}
\usepackage{amsmath,amsthm,amssymb,amsfonts,mathrsfs}
\usepackage[hypertex, bookmarks, colorlinks=true, plainpages = false,
citecolor = green, urlcolor = blue, filecolor = blue]{hyperref}
\newtheorem{theorem}{Theorem}[section]
\newtheorem{lemma}[theorem]{Lemma}
\newtheorem{proposition}[theorem]{Proposition}
\newtheorem{corollary}[theorem]{Corollary}
\newtheorem{definition}[theorem]{Definition}
\newtheorem{example}[theorem]{Example}

\newtheorem{remark}[theorem]{Remark}

=22cm\headheight=0cm\topskip=0cm\topmargin=0cm
\def\a{\bar {a}}\def\b{\bar {b}}
\def\x{\bar {x}}\def\y{\bar {y}}
\def\z{\bar {z}}

\def\Nn{\mathbb N}

\def\Qn{\mathbb Q}
\def\Rn{\mathbb R}

\def\Sc{\mathcal S}
\def\0{\sf 0}

\makeatletter
\def\dotminussym#1#2{%
  \setbox0=\hbox{$\m@th#1-$}%
  \kern.5\wd0%
  \hbox to 0pt{\hss\hbox{$\m@th#1-$}\hss}%
  \raise.8\ht0\hbox to 0pt{\hss$\m@th#1.$\hss}%
  \kern.5\wd0}

\begin{document}
\begin{center}
{\Large\sc Affine logic with\\ the integration operator}
\bigskip

{{\bf Seyed-Mohammad Bagheri}}
\vspace{5mm}

{\footnotesize {\footnotesize Department of Pure Mathematics,
Faculty of Mathematical Sciences,\\
Tarbiat Modares University, Tehran, Iran, P.O. Box 14115-134}\\
e-mail: bagheri@modares.ac.ir}
\end{center}

\begin{abstract}
Affine continuous logic is extended to affine integration logic. Affine compactness theorem is proved by both the ultramean
construction and Henkin's method. Also, a proof system and a completeness theorem are given.
An appropriate variant of the Keisler-Shelah isomorphism theorem holds in this setting.
This helps us to characterize non-forking extensions in affine stable theories
by means of the notion of elementary embedding in the expanded logic.
\end{abstract}

{\sc Keywords}: Affine logic, integral, compactness, completeness

{\small {\sc AMS subject classification:} 03C20, 03C66, 03C80}

\section{Introduction}
The interaction between logic and probability is a fascinating subject studied by several authors using different technics.
A related approach in this respect is integration logic originally introduced by Hoover and Keisler \cite{Hoover,Keisler}.
The main object has been to expand first order logic with the integration operator (see also \cite{Bagheri-Pourmahd1}).
In this paper we expand continuous logic with the integration operator.
A tentative study has been done in \cite{Bagheri-Pourmahd2}, where sigma-additive integration theory
is combined with continuous logic. A technical problem in this respect is that sigma-additivity
does not match well with the finitary logical arguments such as Henkin's construction.
Using the ultraproduct method, on the other hand, leads to additional set theoretic requirements.
In the present text, we replace sigma-additive measures with finitely additive measures and concentrate more on the
affine part of continuous logic \cite{ACMT}. The full continuous case has a parallel argument.
ZFC is then sufficient and Heknin's construction works using the mean value property.
Among the significant properties of sigma-additive integration theory are the Fubini property and Fatou's lemma.
The first one is easily stated by affine axioms and the second one is again an infinitary property.
In contrast, an advantage of this relaxation is that types in affine theories are brought into the discussion
by the Riesz representation theorem. More precisely, if $p(x)$ is a type over a continuous structure $M$,
the pair $(M,p)$ can be regarded as a probability metric structure.
The primary objective of continuous integration logic is of course to formulate a logical framework
for stating and studying properties of metric measure structures coming from ergodic theory
or other parts of mathematics.

To sum up, continuous structures are assumed to be equipped with a finitely additive probability measure
on their Borel algebras. It must be however noted that, smaller subalgebras of the Borel algebras
are often sufficient for this purpose. This is because we only need to integrate continuous formulas.
Assuming such an additional information is harmless and does not affect the intended semantics.
Since, every finitely additive probability measure on such a subalgebra is always extended to the
whole Borel algebra without changing integrals.
In this manner, we can have a unified discussion for such probability structures.
An unpleasant feature of this assumption is of course that structures equipped with
distinct measures may be model theoretically isomorphic, hence regarded as identical.

Let $\mathcal A$ be a Boolean algebra of subsets of $X$. A \emph{probability charge} on $\mathcal A$
is any finitely additive function $\mu:\mathcal A\rightarrow[0,1]$ such that $\mu(X)=1$.
We often use the word \emph{charge} to mean a probability charge. If $\mathcal A$ is the power set of $X$,
we call it an \emph{ultracharge}.
Let $(X,\mathcal A,\mu)$ be a charge space and $\mathcal H(\Rn)$ be the Boolean algebra of subsets of $\Rn$
generated by the half-intervals $[r,s)$.
A function $f:X\rightarrow\Rn$ is called $(\mathcal A,\mathcal H(\Rn))$-measurable
if $f^{-1}(A)\in\mathcal A$ for every $A\in\mathcal H(\Rn)$.
Bounded $(\mathcal A,\mathcal H(\Rn))$-measurable functions are always integrable (\cite{Aliprantis-Inf} Th. 11.8).
This fact remains true if we replace $\mathcal{H}(\Rn)$ with the Boolean algebra $\mathcal{H}_{\Qn}(\Rn)$
generated by rational half-intervals, i.e. intervals $[r,s)$ where $r,s\in \Qn$.

Let $M$ be a metric space. $\mathcal A(M)$ denotes the \emph{Borel algebra}\index{Borel algebra}
of $M$, i.e. the Boolean algebra generated by the open subsets of $M$ and $\mathcal B(M)$ denotes the \emph{Borel sigma-algebra} of $M$.
A charge defined on $\mathcal A(M)$ is called a \emph{Borel charge}\index{Borel charge}.
A Borel charge $\mu$ is called \emph{regular} if it is both inner and outer regular,
i.e. for every $A\in\mathcal{A}(M)$
$$\mu(A)=\sup\{\mu(F)|\ A\supseteq F\in\mathcal A(M),\ F\ \mbox{is\ closed}\}$$
$$\mu(A)=\inf\{\mu(G)|\ A\subseteq G\in\mathcal A(M),\ G\ \mbox{is\ open}\}.$$
In fact, since $\mu(M)$ is finite, any one of these properties implies the other one.
If $\mu$ is a Borel charge on $M$, every bounded continuous real valued function on $M$ is $\mu$-integrable.

In this paper, we use integration for two completely different purposes.
First, if $M$ is a metric structure and $\mathcal A$ is its Borel algebra, then we integrate
continuous formulas in order to create new continuous formulas.
Second, if $X=I$ is an index set and $\mathcal A$ is its power set, we integrate functions
$f:I\rightarrow\Rn$ in order to construct \emph{ultrameans}. We will use the notation $\int_I$ in the second case.

\section{Syntax and semantics}
Pseudometric spaces are denoted by $X, M, N$ etc. If there is no risk of confusion,
their metrics as well as the corresponding quotient metrics are all denoted by the symbol $\rho$.
If $M$ is a pseudometric space, we put the pseudometric
$\sum_i \rho(a_i,b_i)$ on $M^n$. A function $f:M\rightarrow N$ is $\lambda$-Lipschitz if
$$\hspace{10mm} \rho(f(x),f(y))\leqslant\lambda \rho(x,y)\ \ \ \ \ \ \ \forall x,y\in M.$$

We use $\Rn$ as the \emph{space of truth values} and $1$, addition $+$ and scalar multiplications $r\cdot$
(for any $r\in\Rn$) as connectives in affine logic.
Also, $\inf$, $\sup$, $\int$ are quantifiers and $x,y,z,...$ is a list of individual variables.
In full continuous logic (CL), one further uses the connectives $\wedge$ and $\vee$.
This family of symbols generates continuous integration logic.

A {\em Lipschitz language} is a first order language $L=\{F,...; R,...; c,...\}$
such that to each function symbol $F$ is assigned a Lipschitz constant $\lambda_F\geqslant0$
and to each relation symbol $R$ is assigned a Lipschitz constant $\lambda_R\geqslant0$ .
We have also an additional symbol $\rho$ for the metric (with $\lambda_{\rho}=1$)
which is traditionally put in the list of logical symbols as in first order logic.
A charged language consists of a Lipschitz language $L$ augmented with a probability charge symbol $\mu$
which is put again in the list of logical symbols. We may generally allow several such symbols which
are to be interpreted by Borel charges on the intended structure $M$.
Even, we may allow n-dimensional symbols which can be interpreted by Borel charges on $M^n$.
The arguments which will follow are then similar.
In this paper, we only use one unary charge symbol $\mu$. Sometimes, to emphasize that the language is charged we denote it by $L_\mu$.

\begin{definition} \label{prestructure}
{\em A {\em prestructure} in $L$ is a pseudo-metric space $(M,\rho)$ equipped with:

- for each $c\in L$, an element $c^{M}\in M$

- for each $n$-ary function symbol $F$ a $\lambda_F$-Lipschitz function
$F^{M}:M^n\rightarrow M$

- for $n$-ary relation symbol $R$ (including $\rho$) a $\lambda_R$-Lipschitz function
$R^{M}:M^n\rightarrow[0,1]$

- a regular Borel probability charge $\mu$ on $M$.}
\end{definition}
\vspace{2mm}

It is well-known that if $M$ is a compact Hausdorff space, every regular Borel charge on
$M$ is uniquely extended to a regular Borel measure on $M$ (see \cite{Aliprantis-Inf} \S 14).
So, for compact prestructures, $\mu$ may be assumed to be a regular Borel measure.
\vspace{1mm}

\emph{$L$-terms} and their Lipschitz constants are inductively defined as follows:

\begin{quote}
- Every $c\in L$ and variable $x$ is a term with Lipschitz constants resp $0$ and $1$.

- If $F$ is a $n$-ary function symbol and $t_1,...,t_n$ are terms, then $t=F(t_1,...,t_n)$ is a
term with Lipschitz constant $\lambda_t=\lambda_F\cdot\sum_i\lambda_{t_i}$.
\end{quote}
One checks that for any $M$ and term $t(\x)$,\ \ $t^M:M^n\rightarrow M$ is $\lambda_t$-Lipschitz.
$L$-formulas in \emph{affine integration logic} (AL$_{\int}$-formulas) and their Lipschitz constants and bounds
are inductively defined as follows:

\begin{quote}
- $1$ is an atomic formula with Lipschitz constant $0$ and bound $1$.

- If $R$ is a $n$-ary relation symbol (including $\rho$ with $n=2$) and $t_1,...,t_n$ are terms,
then $R(t_1,...,t_n)$ is an atomic formula with Lipschitz constant
$\lambda_R\cdot\sum\lambda_{t_i}$ and bound ${\sf{b}}_R=1$.

- If $\phi,\psi$ are formulas and $r\in\Rn$, then $\phi+\psi$ and $r\phi$ are formulas
with Lipschitz constants $\lambda_\phi+\lambda_\psi$,\ $|r|\lambda_\phi$ and bounds
${\sf{b}}_{\phi}+{\sf{b}}_\psi$,\ $|r|{\sf{b}}_\phi$ respectively.

- If $\phi$ is a formula and $x$ is a variable, then $\inf_x\phi$, $\sup_x\phi$ and $\int\phi\ dx$
are formulas with the same Lipschitz constant and bound as $\phi$.
\end{quote}
\vspace{2mm}

Restricting AL$_{\int}$ to integration-free formulas, one obtains affine logic AL \cite{ACMT}.
In this text, by formula we mean an AL$_{\int}$-formula.
The notion of free variable is defined in the usual way
regarding the symbols $\sup$, $\inf$ and $\int$ as unary quantifiers.
As usual, one writes $\phi(\x)$ to indicate that $\x$ contains the free variables of $\phi$.
A sentence is a formula without free variables.

\begin{definition}
{\em Let $M$ be a $L$-prestructure. The value of a formula $\phi(\x)$ in $\a\in M$ is inductively defined as follows:

- $(R(t_1,...,t_n))^M(\a)=R^M(t^M_1(\a),...,t_n^M(\a))$

- $(r\phi+s\psi)^M(\a)=r\phi^M(\a)+s\psi^M(\a)$

- $(\inf_y\phi)^M(\a)=\inf_{b\in M}\phi^M(\a,b)$\hspace{11mm} (where $\phi=\phi(\x,y)$)

- $(\sup_y\phi)^M(\a)=\sup_{b\in M}\phi^M(\a,b)$ \hspace{25mm} ...

- $(\int\phi\ dy)^M(\a)=\int\phi^M(\a,y)\ d\mu$ \hspace{28mm} ...}
\end{definition}

One proves easily that for every $L$-prestructure $M$,\ \ $\phi^M(\x)$ is a
$\lambda_{\phi}$-Lipschitz function on $M^n$ and $|\phi^M(\a)|\leqslant{\sf{b}}_{\phi}$ for all $\a$.
The following two theorems are essential in our discussion.

\begin{theorem} \emph{(Kantorovich, \cite{Aliprantis-Inf} Th. 8.32)} \label{ext1}
Let $G$ be a majorizing vector subspace of a Riesz space $E$.
Let $\Lambda:G\rightarrow\Rn$ be a positive linear function.
Then $\Lambda$ has an extension to a positive linear function on $E$.
\end{theorem}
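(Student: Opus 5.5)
The plan is the standard Zorn's lemma argument combined with a one-step extension lemma of Hahn--Banach type, where positivity replaces the sublinear domination. Recall that $G$ is majorizing in $E$ if for every $x\in E$ there is $y\in G$ with $x\leqslant y$. Consider the family $\mathcal P$ of pairs $(H,\Phi)$, where $H$ is a vector subspace with $G\subseteq H\subseteq E$ and $\Phi:H\rightarrow\Rn$ is a positive linear extension of $\Lambda$. Order $\mathcal P$ by extension. The union of a chain is again such a pair, since both positivity and linearity are checked on finitely many elements at a time. Zorn's lemma therefore gives a maximal element $(H,\Phi)$, and any subspace containing $G$ is automatically majorizing. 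It remains to show $H=E$.

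\textbf{One-step extension.} Suppose $x\in E\setminus H$. For $h\in H$ with $h\leqslant x$ and $k\in H$ with $x\leqslant k$ we have $h\leqslant k$, so positivity gives $\Phi(h)\leqslant\Phi(k)$. Now set
$$\alpha=\sup\{\Phi(h):\ h\in H,\ h\leqslant x\},\qquad \beta=\inf\{\Phi(k):\ k\in H,\ x\leqslant k\}.$$
Since $H$ is majorizing, some $k\geqslant x$ exists, so $\beta<+\infty$. Applying the majorizing property to $-x$ gives some $h\in H$ with $-x\leqslant -h$, i.e. $h\leqslant x$, so $\alpha>-\infty$. Hence $\alpha\leqslant\beta$ are real numbers. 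Choose $c\in[\alpha,\beta]$ and define $\Phi'(h+tx)=\Phi(h)+tc$ on $H\oplus\Rn x$. This is well defined and linear because $x\notin H$.

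\textbf{Positivity of $\Phi'$.} This is the only real check. Let $h+tx\geqslant0$.
\begin{itemize}
\item If $t>0$, then $-h/t\leqslant x$, so $\Phi(-h/t)\leqslant\alpha\leqslant c$, which gives $\Phi(h)+tc\geqslant0$.
\item If $t<0$, then $x\leqslant h/(-t)$, so $c\leqslant\beta\leqslant\Phi(h)/(-t)$, which again gives $\Phi(h)+tc\geqslant0$.
\item If $t=0$, positivity follows from that of $\Phi$.
\end{itemize}
So $\Phi'$ is a positive linear extension on a strictly larger subspace, contradicting maximality. Hence $H=E$.

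The main obstacle is the finiteness of $\alpha$ and $\beta$. This is exactly where the majorizing hypothesis is needed, applied both to $x$ and to $-x$. Everything else is routine. An alternative route is to define the sublinear map $p(x)=\inf\{\Lambda(y): y\in G, y\geqslant x\}$, check that it is finite, and apply the Hahn--Banach theorem to get a linear $\Phi\leqslant p$ extending $\Lambda$. Positivity then follows from $\Phi(-x)\leqslant p(-x)\leqslant\Lambda(0)=0$ for $x\geqslant0$. I would use whichever is shorter; the Hahn--Banach route is likely the cleanest.
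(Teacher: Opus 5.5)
Your proof is correct. The chain-union step works, $\alpha$ and $\beta$ are finite because the majorizing property is applied to both $x$ and $-x$, and all three cases of the positivity check go through. You never use the lattice operations, so you have in fact proved the result for an arbitrary ordered vector space.

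The paper gives no proof of its own; it quotes the theorem from Aliprantis--Border. The standard argument there is the route you sketch at the end. One sets $p(x)=\inf\{\Lambda(y):\ y\in G,\ x\leqslant y\}$ and observes that $p$ is real-valued and sublinear, with $p=\Lambda$ on $G$. A Hahn--Banach extension $\Phi\leqslant p$ is then positive, since $\Phi(-x)\leqslant p(-x)\leqslant\Lambda(0)=0$ for $x\geqslant0$.

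Your main route is the same proof with Hahn--Banach unrolled. Let $p_H$ be the analogous functional built from $(H,\Phi)$. A linear extension of $\Phi$ to $H\oplus\Rn x$ is positive exactly when it is dominated by $p_H$. Since $p_H(h+y)=\Phi(h)+p_H(y)$ for $h\in H$, the Hahn--Banach interval of admissible values at $x$ is precisely your $[\alpha,\beta]$, with $\beta=p_H(x)$ and $\alpha=-p_H(-x)$.

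The two routes buy different things:
\begin{itemize}
\item Your direct route is self-contained, needing only Zorn's lemma. It shows exactly where the majorizing hypothesis enters and how much freedom the extension has at each step.
\item The Hahn--Banach route is shorter once that theorem is taken as given. It also carries over verbatim to operators with values in a Dedekind complete Riesz space, the vector-valued form of Kantorovich's theorem, because Hahn--Banach holds for such targets.
\end{itemize}
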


\begin{theorem} (Riesz representation theorem \cite{Aliprantis-Inf}, Th.14.9)\label{Riesz representation} 
Let $X$ be a normal Hausdorff topological space and $\Lambda:{\mathbf C}_b(X)\rightarrow\Rn$ be a positive linear functional.
Then there is a unique finite regular Borel charge $\mu$ such that
$\mu(X)=\|\Lambda\|=\Lambda(1)$ and $$\ \ \ \ \Lambda(f)=\int fd\mu\hspace{16mm} \forall f\in\mathbf{C}_b(X).$$
\end{theorem}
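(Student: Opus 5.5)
This is the Riesz representation theorem for positive functionals on $\mathbf{C}_b(X)$ with $X$ normal Hausdorff. The plan is to construct a finitely additive set function on open sets from $\Lambda$, extend it to the Borel algebra $\mathcal A(X)$ by regularity, and then check that integration against it recovers $\Lambda$.

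\textbf{Step 1 (open and closed sets).} For an open set $G$ put
$$\mu(G)=\sup\{\Lambda(f)\mid f\in\mathbf C_b(X),\ 0\leqslant f\leqslant 1,\ \mathrm{supp}\text{-}\mathrm{type}\ f\prec G\},$$
where $f\prec G$ means that $f$ vanishes off some closed set contained in $G$. For a closed set $F$ put
$$\mu(F)=\inf\{\Lambda(f)\mid f\in\mathbf C_b(X),\ 1_F\leqslant f\leqslant 1\}.$$
Normality, through Urysohn's lemma, supplies enough functions separating disjoint closed sets. With it I will show:
\begin{itemize}
\item $\mu$ is monotone;
\item $\mu(X)=\Lambda(1)=\|\Lambda\|$, using positivity and $|f|\leqslant\|f\|\cdot 1$;
\item $\mu(F)+\mu(X\setminus F)=\Lambda(1)$;
\item $\mu$ is finitely additive on disjoint closed sets and finitely subadditive on open sets. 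This uses Urysohn functions and finite partitions of unity, which exist for finite open covers of a normal space.
\end{itemize}

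\textbf{Step 2 (extension to $\mathcal A(X)$).} For arbitrary $A$ define inner and outer values
$$\mu_*(A)=\sup\{\mu(F)\mid F\subseteq A \text{ closed}\},\qquad \mu^*(A)=\inf\{\mu(G)\mid G\supseteq A \text{ open}\}.$$
Call $A$ regular when $\mu_*(A)=\mu^*(A)$. I will show that the regular sets form an algebra containing all open sets and that $\mu$ is finitely additive on it. Closure under complements follows from $\mu(F)+\mu(X\setminus F)=\Lambda(1)$. Closure under finite unions and intersections follows by approximating $A$ and $B$ from inside by closed sets and from outside by open sets, then using sub- and superadditivity from Step 1.

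The main obstacle is showing that every open set $G$ is regular, i.e. $\mu(G)=\sup\{\mu(F)\mid F\subseteq G\text{ closed}\}$. This is exactly where normality is needed. Given $f\prec G$ with $\Lambda(f)$ close to $\mu(G)$, let $F\subseteq G$ be closed with $f=0$ off $F$. Then $f\leqslant g$ for every $g$ with $1_F\leqslant g\leqslant 1$, so $\Lambda(f)\leqslant\mu(F)$. This yields inner regularity. The two definitions are consistent: $\mu(F)\leqslant\mu(G)$ whenever $F\subseteq G$, since Urysohn gives $1_F\leqslant h\prec G$.

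\textbf{Step 3 (representation).} To show $\Lambda(f)=\int f\,d\mu$, it suffices by linearity to treat $0\leqslant f<1$. Partition the range into $n$ levels $t_k=k/n$. Write $f=\frac1n\sum_{k=1}^n f_k$, where $f_k=\min(\max(nf-(k-1),0),1)$. Each $f_k$ satisfies
$$1_{\{f\geqslant t_k\}}\leqslant f_k\leqslant 1_{\{f>t_{k-1}\}},$$
so $\mu(\{f\geqslant t_k\})\leqslant\Lambda(f_k)\leqslant\mu(\{f>t_{k-1}\})$, using the definitions of $\mu$ on closed and open sets. The same bounds hold for $\int f_k\,d\mu$. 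Summing over $k$ gives $|\Lambda(f)-\int f\,d\mu|\leqslant 2/n$, and letting $n\to\infty$ gives equality.

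\textbf{Uniqueness.} Any regular Borel charge $\nu$ representing $\Lambda$ must satisfy $\nu(F)=\inf\{\int g\,d\nu\mid 1_F\leqslant g\leqslant 1\}$ for closed $F$. This again uses Urysohn together with outer regularity of $\nu$ at $F$. Hence $\nu$ agrees with $\mu$ on closed sets, and by regularity on all of $\mathcal A(X)$.
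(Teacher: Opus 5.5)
The paper gives no proof of this theorem; it quotes it from Aliprantis--Border, so there is no internal argument to compare with. Your proof is the standard direct construction of the representing charge: an infimum over majorants on closed sets, a supremum on open sets, the regular sets forming an algebra, and a level-set sandwich. It is correct in outline, but a few steps need repair or sharper justification.

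\textbf{Step 3 upper bound.} The inequality $\Lambda(f_k)\leqslant\mu(\{f>t_{k-1}\})$ is not literally an instance of your definition. The smallest closed set off which $f_k$ vanishes is $\overline{\{f>t_{k-1}\}}$, which may meet $\{f=t_{k-1}\}$. So $f_k\prec\{f>t_{k-1}\}$ can fail. Use $(f_k-\delta)^+$ instead. It vanishes off the closed set $\{f_k\geqslant\delta\}\subseteq\{f>t_{k-1}\}$ and satisfies $0\leqslant f_k-(f_k-\delta)^+\leqslant\delta$. This gives $\Lambda(f_k)\leqslant\mu(\{f>t_{k-1}\})+\delta\Lambda(1)$, and you let $\delta\to0$.

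\textbf{Error constant.} The two sandwich bounds differ by $\frac1n\sum_k\mu(\{t_{k-1}<f<t_k\})\leqslant\Lambda(1)/n$. So the error is $\Lambda(1)/n$, not $2/n$; this is harmless.

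\textbf{Where normality enters.} Normality is not what gives inner regularity of open sets; that is immediate from your definition of $\mu(G)$. Normality, via Urysohn and finite partitions of unity, is used in four places:
\begin{itemize}
\item the mixed monotonicity $\mu(F)\leqslant\mu(G)$ for closed $F\subseteq G$ open;
\item additivity on disjoint closed sets;
\item subadditivity on open sets;
\item uniqueness.
\end{itemize}

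\textbf{Closure of regular sets under unions.} This step needs more than plain sub- and superadditivity. You need two mixed estimates:
\begin{itemize}
\item $\mu(G\setminus F)\leqslant\mu(G)-\mu(F)$ for closed $F\subseteq G$ open;
\item $\mu(G)\leqslant\mu(F)+\mu(U)$ whenever $G\subseteq F\cup U$, with $F$ closed and $G,U$ open.
\end{itemize}
Both follow from four facts you already have or can derive:
\begin{itemize}
\item inner regularity of open sets;
\item outer regularity of closed sets, which follows from $\mu(F)+\mu(X\setminus F)=\Lambda(1)$ together with inner regularity of opens;
\item additivity on disjoint closed sets;
\item subadditivity on open sets.
\end{itemize}
Apply them with $F=F_A\cup F_B$ and $U=(G_A\setminus F_A)\cup(G_B\setminus F_B)$. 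This yields $\mu(G_A\cup G_B)-\mu(F_A\cup F_B)\leqslant2\epsilon$, which is the estimate you need.
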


We will then use the following corollary frequently in order to find Borel charges on structures.

\begin{corollary}\label{KR}
Let $X$ be a metric space and $G\subseteq{\mathbf C}_b(X)$ be a majorizing vector subspace.
Then, for every positive linear $\Lambda:G\rightarrow\Rn$, there is a regular Borel charge $\mu$ on $X$
such that $\mu(X)=\Lambda(1)$ and $$\Lambda(f)=\int fd\mu\hspace{18mm} \forall f\in G.$$
\end{corollary}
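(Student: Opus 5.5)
The plan is to combine the two theorems just quoted. First extend $\Lambda$ from $G$ to all of $\mathbf{C}_b(X)$ by Theorem \ref{ext1}, and then represent the extension by a charge using Theorem \ref{Riesz representation}.

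For the first step, take $E=\mathbf{C}_b(X)$ with the pointwise order. This is a Riesz space, since the pointwise maximum and minimum of bounded continuous functions are again bounded and continuous. By hypothesis $G$ is a majorizing vector subspace of $E$: every $f\in\mathbf{C}_b(X)$ is dominated by some $g\in G$. So Theorem \ref{ext1} applies and gives a positive linear functional $\tilde\Lambda:\mathbf{C}_b(X)\rightarrow\Rn$ with $\tilde\Lambda|_G=\Lambda$.

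For the second step, observe that every metric space is a normal Hausdorff topological space. Hausdorffness is clear. For normality, given disjoint closed sets $A,B$, the function $x\mapsto \rho(x,A)/(\rho(x,A)+\rho(x,B))$ is continuous and separates them, so the sets where it is $<1/2$ and $>1/2$ are disjoint open neighbourhoods of $A$ and $B$. Theorem \ref{Riesz representation} then yields a regular Borel charge $\mu$ with $\mu(X)=\tilde\Lambda(1)$ and $\tilde\Lambda(f)=\int f\,d\mu$ for all $f\in\mathbf{C}_b(X)$. Restricting to $f\in G$ gives $\Lambda(f)=\int f\,d\mu$.

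It remains to check that $\mu(X)=\Lambda(1)$. This needs $1\in G$, so that $\tilde\Lambda(1)=\Lambda(1)$. Here I read the statement as implicitly assuming that $1\in G$; otherwise $\Lambda(1)$ would be undefined. In the intended applications $G$ is a space of formulas containing the constant formula $1$, so this holds. If one insisted on dropping this assumption, one would set $\Lambda(1):=\tilde\Lambda(1)$ for the chosen extension, and the equality would again be immediate.

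I do not expect a genuine obstacle, since all the weight is carried by the two cited theorems. The only point needing care is matching hypotheses: metric spaces are normal, $\mathbf{C}_b(X)$ is a Riesz space, and $1\in G$ so that the normalisation $\mu(X)=\Lambda(1)$ transfers from $\tilde\Lambda$ to $\Lambda$.
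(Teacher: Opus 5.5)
Your proposal is correct and is the argument the paper intends: the paper states the corollary without a written proof, directly after Theorems \ref{ext1} and \ref{Riesz representation}. The implied proof is exactly yours: extend $\Lambda$ to $\mathbf{C}_b(X)$ by Kantorovich, then represent the extension by Riesz, using that metric spaces are normal Hausdorff. Your point that $1\in G$ must be tacitly assumed for $\Lambda(1)$ to make sense also matches the paper, which says ``we may further assume $G$ contains constant maps'' when it later applies the corollary.
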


An $L$-structure is a prestructure $(M,\rho,\mu)$ for which $\rho$ is a complete metric.
Every prestructure is transformed to a $L$-structure in the natural way.
We briefly review the argument. 
Let $M$ be a prestructure in $L$ and $N$ be the corresponding quotient metric space.
Let $\pi:M\rightarrow N$ be the quotient map. It is easily verified that
$$\mathcal A(N)=\{X\subseteq N|\ \pi^{-1}(X)\in\mathcal A(M)\}.$$
Interpretations of function and relation symbols on $M$ induce well-defined functions and relations on $N$.
Also, $\pi$ induces a Borel charge $\nu$ on
$N$ by
$$\nu(X)=\mu(\pi^{-1}(X)), \ \ \ \ \ \ \ X\in\mathcal A(N).$$
Then, $(N,\nu)$ is a $L$-prestructure and
$$\ \ \int f\ d\nu=\int f\circ\pi\ d\mu\hspace{16mm} \forall f\in\mathbf{C}_b(N).$$
We must now extend this prestructure to the completion $\bar{N}$ of $N$.
Clearly, the interpretation of every function (resp. relation) symbol induces
a unique well-defined function (resp. relation) on $\bar{N}$.
Also, $\nu$ induces a positive linear functional $\bar\Lambda$ on $\mathbf{C}_b(\bar{N})$ by $$\bar\Lambda(\bar f)=\int fd\nu$$
where $\bar{f}$ is the natural extension of $f$ to $\bar{N}$.
So, by Corollary \ref{KR}, there is a regular Borel charge $\bar\nu$ on $\bar{N}$
such that
$$\int f\ d\nu=\bar\Lambda(\bar f)=\int \bar f\ d\bar\nu\hspace{14mm} \forall f\in\mathbf{C}_b(N).\hspace{10mm}(*)$$
We can now state the main result on the completion of Lipschitz prestructures.

\begin{proposition}\label{exist1}
For each $L$-prestructure $M$ there is a $L$-structure $K$ and a dense map
$\pi:M\rightarrow K$ such that for every formula $\phi(\x)$ and $a_1,...,a_k\in M$
$$\phi^{M}(a_1,...,a_k)=\phi^{K}(\pi a_1,...,\pi a_k).$$
\end{proposition}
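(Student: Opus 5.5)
We take $K=\bar N$ with the induced interpretations and the regular Borel charge $\bar\nu$ from $(*)$, and $\pi$ the composite of the quotient map $M\to N$ with the inclusion $N\subseteq\bar N$. This map is dense by construction, and $K$ is an $L$-structure because $\bar N$ is complete and the extended symbols keep their Lipschitz constants. It then suffices to prove, by induction on the complexity of $\phi(\x)$, that $\phi^M(\a)=\phi^K(\pi\a)$ for all tuples $\a$ in $M$. We split this into the two steps $M\to N$ and $N\to\bar N$.

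For the step $M\to N$, atomic formulas are preserved because the quotient interpretations are defined through $\pi$. The connectives $+$ and $r\cdot$ are trivial. For $\sup_y$ and $\inf_y$ we use that $\pi$ is surjective onto $N$. For $\int\phi\,dy$, the induction hypothesis gives $\phi^M(\a,y)=\phi^N(\pi\a,\pi y)$, and $\phi^N(\pi\a,\cdot)$ is a bounded Lipschitz function in $\mathbf C_b(N)$. The displayed identity $\int f\,d\nu=\int f\circ\pi\,d\mu$ then yields equality of the integrals.

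For the step $N\to\bar N$, we prove a slightly stronger statement by induction: for each formula $\phi(\x)$, the function $\phi^{\bar N}$ restricted to $N$ equals $\phi^N$. Since both are $\lambda_\phi$-Lipschitz, $\phi^{\bar N}$ is then the unique continuous extension $\overline{\phi^N}$. The atomic case holds by definition of the extended symbols, and the connectives are immediate. For $\sup_y$, take $\a\in N$. The function $\phi^{\bar N}(\a,\cdot)$ is continuous on $\bar N$ and $N$ is dense, so the supremum over $\bar N$ equals the supremum over $N$, which equals $(\sup_y\phi)^N(\a)$. The $\inf_y$ case is the same.

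The integral case is the main point. For $\a\in N$, the function $f=\phi^N(\a,\cdot)$ lies in $\mathbf C_b(N)$, and by induction its extension $\bar f$ is exactly $\phi^{\bar N}(\a,\cdot)$. Property $(*)$ then gives $\int\phi^{\bar N}(\a,y)\,d\bar\nu=\int\bar f\,d\bar\nu=\int f\,d\nu=(\int\phi\,dy)^N(\a)$.

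The obstacle I expect is making sure $(*)$ is legitimately available. This needs $\bar\Lambda$ to be a well-defined positive linear functional on all of $\mathbf C_b(\bar N)$. Every bounded continuous function on $\bar N$ is the unique extension of its restriction to the dense set $N$, and positivity and $\bar\Lambda(1)=1$ are clear. So Corollary \ref{KR}, applied with $G=\mathbf C_b(\bar N)$, which trivially majorizes itself, gives a regular Borel probability charge $\bar\nu$, and $K$ satisfies Definition \ref{prestructure}. A second point to check is that only functions of the form $\phi^{\bar N}(\a,\cdot)$ with parameters $\a$ in $N$ are ever integrated in the induction. This is why the induction is run only on tuples from $N$, which is all that the statement, with parameters $\pi a_i$, requires.
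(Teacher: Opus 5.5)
Your proof is correct and follows the paper's argument. You use the same $K=\bar N$ and the same chain $\int\phi^{K}\,d\bar\nu=\int\bar f\,d\bar\nu=\int f\,d\nu=\int f\circ\pi\,d\mu$ in the integral step, split into the stages $M\to N$ and $N\to\bar N$, with the parameter, $\sup$ and $\inf$ cases written out. You also define $\bar\Lambda$ by restricting $g\in\mathbf{C}_b(\bar N)$ to $N$ and apply $(*)$ only to the Lipschitz functions $\phi^N(\a,\cdot)$. That is a correct tightening of the paper's phrasing of $(*)$ "for all $f\in\mathbf{C}_b(N)$", since a general bounded continuous function on $N$ need not extend to $\bar N$.
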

\begin{proof} With the above notations, set $K=\bar{N}$. The claim is proved by induction on the complexity of $\phi$.
We verify the integral case assuming it is proved for $\phi(x)$. Let $f$ be the map induced by $\phi^M(x)$ on $K$
(via $M\stackrel{\pi}{\rightarrow}N\stackrel{id}{\rightarrow}K$).
Then, by the induction hypothesis, for all $x\in M$, we have that $f(\pi x)=\phi^M(x)=\phi^{K}(\pi x)$
and hence $\bar f=\phi^{K}$. So, by $(*)$ above
$$\Big(\int\phi(x) dx\Big)^{K}=\int\phi^{K}(x)d\bar\nu=\int\bar{f}d\bar{\nu}$$
$$=\int f\ d\nu=\int f\circ\pi\ d\mu=\Big(\int\phi(x) dx\Big)^M.$$
\end{proof}

\begin{example}
\em{1. Every compact metric space $(M,\rho)$ equipped with a Borel probability measure $\mu$
is a charged metric structure in the empty language.
If $(M,\cdot)$ is a group and $\rho$ is left invariant, then, $(M,\rho,\cdot,\mu)$ is a
charged metric structure. If $f:M\rightarrow M$ is Lipschitz, then $(M,\rho,f,\mu)$ is a charged metric structure.
\vspace{2mm}

\noindent 2. Let $M$ be a first order model. Then, the Borel algebra of $M$ coincides the power set
of $M$ and a Borel charge $\mu$ on $M$ is nothing else than an ultracharge (which is necessarily regular).
So, $(M,\mu)$ is a charged metric structure.
An interesting example in this respect is $(\Nn,+,\cdot)$ equipped with any ultracharge $\mu$.
For example, if $\mathcal F$ is an ultrafilter on $\Nn$, then
$$\ \ \mu(A)=\lim_{n,\mathcal F}\frac{|A\cap n|}{n}\hspace{15mm} (A\subseteq\Nn)$$
defines an ultracharge which extends the natural density.
First order models equipped with an ultrafilter are studied in \cite{Veloso} under the name of \emph{ultrafilter logic}.
So, continuous integration logic extends ultrafilter logic.
Every first order structure $(M,a)$ can be regarded as a charged metric structure and
$\int\phi^M(x)dx=\phi^M(a)$ for every $\phi(x)$.
A more natural way of viewing a metric structure $M$ as a charged one, is to allow $0\leqslant\mu(M)\leqslant1$.
Then, $\mu(M)=0$ corresponds to the uncharged case.}
\end{example}

Two $L$-structures $M$, $N$ are equivalent, denoted by $M\equiv N$, if $\sigma^M=\sigma^N$ for every sentence $\sigma$.
A map $f:M\rightarrow N$ is an \emph{embedding} if $\phi^M(\a)=\phi^N(f(\a))$ for every quantifier-free formula $\phi(\x)$ and $\a\in M$.
It is an \emph{elementary embedding} if $\phi^M(\a)=\phi^N(f(\a))$ for every formula $\phi(\x)$ and $\a\in M$.
An \emph{isomorphism} is a surjective elementary embedding. So, an isomorphism is not necessarily measure preserving.
This is because we have usually a collection of measurable subsets irrelevant to the semantics.
Meanwhile, every surjective measure preserving embedding is an isomorphism. 
If $f=id$ is an embedding (resp. elementary embedding), we say $M$ is a \emph{substructure}
(resp. an \emph{elementary substructure}) of $N$ and denoted this by $M\subseteq N$ (resp $M\preccurlyeq N$).
Again, for being a substructure, we do not require any organic relation between the charges of $M$ and $N$.
It is however natural to induce every pure metric substructure $M$ of $(N,\mathcal{B},\mu)$
with the \emph{subspace charge} defined by
$$\mathcal A=\{Y\cap M:\ Y\in\mathcal B\}\ \ \ \ $$
$$\ \ \ \ \ \ \ \ \ \ \ \nu_M(X)=\inf\{\nu(Y):\ X\subseteq Y\in\mathcal{B}\}\ \ \ \ \ \ \ \ \ \ \mbox{for\ any}\ X\in\mathcal A.$$
Then, $\nu_M\leqslant1$. This is an other reason why me must generally allow $\mu(N)\leqslant1$.
In this case, usually a constant coefficient appears in computations.
The following lemma shows that the regularity assumption for $\mu$ is not a serious restriction.

\begin{lemma}
Let $M$ be a metric space equipped with a charge $\mu$ on $\mathcal A\subseteq\mathcal{A}(M)$.
Let $G\subseteq\mathbf{C}_b$ be a vector space of $\mu$-integrable functions.
Then, there is a regular Borel charge $\nu$ on $M$ such that $\int fd\mu=\int fd\nu$ for all $f\in G$.
\end{lemma}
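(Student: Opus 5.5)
The plan is to reduce the statement to Corollary \ref{KR}. Define $\Lambda:G\rightarrow\Rn$ by $\Lambda(f)=\int f\,d\mu$. Since $\mu$ is finitely additive and $G$ consists of $\mu$-integrable functions, the integral is linear on $G$, so $\Lambda$ is linear. Moreover $f\geqslant0$ implies $\int f\,d\mu\geqslant0$, so $\Lambda$ is positive.

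The only hypothesis of Corollary \ref{KR} that may fail is that $G$ be majorizing in $\mathbf{C}_b(M)$, and this is the main point to handle. I would enlarge $G$ to $G'=G+\Rn\cdot1$. Every $h\in\mathbf{C}_b(M)$ satisfies $h\leqslant\|h\|_\infty\cdot1$, so $G'$ is majorizing. The constant $1$ is $\mu$-integrable with $\int1\,d\mu=\mu(M)=1$, because $M\in\mathcal A$. Hence every element of $G'$ is still $\mu$-integrable, and I extend by $\Lambda'(f+r)=\int f\,d\mu+r$, i.e. $\Lambda'(g)=\int g\,d\mu$ on $G'$.

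This extension is well defined: if $f+r=f'+r'$, then $f-f'$ is a constant function and the integral respects this. It is linear by linearity of the integral over the integrable functions. It is positive because $g\geqslant0$ pointwise gives $\int g\,d\mu\geqslant0$, which is monotonicity of the finitely additive integral for bounded integrable functions.

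Applying Corollary \ref{KR} to $G'$ and $\Lambda'$ then gives a regular Borel charge $\nu$ on $M$. It satisfies $\nu(M)=\Lambda'(1)=1$, so $\nu$ is a probability charge, and $\int g\,d\nu=\Lambda'(g)=\int g\,d\mu$ for all $g\in G'$, in particular for all $f\in G$. If the paper allows $\mu(M)\leqslant1$, the same argument yields $\nu(M)=\mu(M)$.

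The main obstacle is only the bookkeeping in the enlargement step: checking that constants are $\mu$-integrable on the subalgebra $\mathcal A$, which holds since $M\in\mathcal A$, and that positivity survives passing from $G$ to $G'$. Once this is settled, the rest is a direct invocation of Corollary \ref{KR}, which itself rests on Kantorovich's extension (Theorem \ref{ext1}) and the Riesz representation theorem (Theorem \ref{Riesz representation}).
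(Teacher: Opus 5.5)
Your proposal is correct and matches the paper's proof: the paper likewise assumes $G$ contains the constants, which makes $G$ majorizing, sets $\Lambda(f)=\int f\,d\mu$, and applies Corollary \ref{KR}. You additionally supply the routine checks the paper leaves implicit: $M\in\mathcal A$, so constants are $\mu$-integrable, and the charge integral is linear and positive on $G+\Rn\cdot 1$.
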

\begin{proof}
We may further assume $G$ contains constant maps. Let $\Lambda_0(f)=\int fd\mu$ for every $f\in G$. By Corollary \ref{KR},
$\Lambda$ is represented by a regular Borel charge $\nu$ on $M$. We have therefore that $\int fd\mu=\Lambda(f)=\int fd\nu$ for every $f\in G$.
\end{proof}

In particular, it is quite possible that $(M,\mu)\simeq(M,\nu)$ via the identity map
for two distinct Borel charges $\mu$ and $\nu$.

\begin{proposition}\label{chain2}
Let $(M_i)_{i<\kappa}$ be an elementary chain of $L$-structures.
Then there is a $L$-structure on the completion $\bar M$ of $M=\bigcup_i{M_i}$ such that $M_j\preccurlyeq\bar M$ for each $j$.
\end{proposition}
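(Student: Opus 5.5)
We build the union structure on $M=\bigcup_i M_i$ symbol by symbol, define its charge by a limiting functional, pass to the completion with Proposition \ref{exist1}, and prove elementarity by induction on formulas. The integral case is the delicate step.

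\textbf{Non-charge part.} Since the chain is elementary, in particular a chain of embeddings, the constants, the function symbols, the relation symbols and $\rho$ are well defined on $M$. They keep their Lipschitz constants, because each value is computed inside some $M_i$.

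\textbf{The charge.} We need a charge on $M$ that reproduces the integrals of the $M_j$. Let $G$ be the vector space of functions on $M$ of the form $\phi^{M}(x,\a)$, where $\phi(x,\y)$ is a formula and $\a$ lies in some $M_j$; we will only know the value of $\phi^M$ a posteriori, so first define $G$ as the span of the symbols $[\phi(x,\a)]$. Define
$$\Lambda([\phi(x,\a)])=\Big(\int\phi(x,\a)\,dx\Big)^{M_j},$$
which does not depend on $j$ by elementarity of the chain.

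The natural plan is to define everything by induction on formulas at once. Set $\phi^{M}(b,\a)=\phi^{M_k}(b,\a)$ for any $k$ with $b,\a\in M_k$; this is well defined by elementarity. This definition gives a bounded Lipschitz function on $M$. The $\inf$ and $\sup$ clauses are handled by the usual Tarski--Vaught argument: for $\a\in M_j$ and $b\in M_k$, $k\ge j$, we have $\phi^{M_k}(b,\a)\ge\inf_y\phi^{M_k}(y,\a)=\inf_y\phi^{M_j}(y,\a)$, and witnesses in $M_j$ give the reverse inequality.

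\textbf{Choosing $\mu$ (the main obstacle).} One must pick a single charge $\mu$ on $M$ with $\int\phi^M(x,\a)\,d\mu=(\int\phi\,dx)^{M_j}$ for all $\phi$ and all $\a\in M_j$. Let $G\subseteq\mathbf C_b(M)$ be the span of all functions $\phi^M(\cdot,\a)$ together with the constants, and set $\Lambda(\sum r_i\phi_i^M(\cdot,\a_i))=\sum r_i(\int\phi_i\,dx)^{M_j}$, where $j$ is large enough to contain all the $\a_i$.

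Well-definedness and positivity both reduce to the following fact. If $\psi(x,\a)=\sum r_i\phi_i(x,\a_i)$ is itself a formula (the connectives are affine) and $\psi^M\ge 0$ on $M$, then in particular $\psi^{M_j}\ge0$. Hence $(\int\psi\,dx)^{M_j}=\int\psi^{M_j}d\mu_j\ge0$. Applying this to both $\psi$ and $-\psi$ gives well-definedness.

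$G$ is majorizing because it contains the constants and all its members are bounded. So Corollary \ref{KR} yields a regular Borel charge $\mu$ on $M$ representing $\Lambda$ on $G$, with $\mu(M)=\Lambda(1)=1$.

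\textbf{Completion and elementarity.} With $\mu$ in hand, $M$ is an $L$-prestructure. An induction on formulas shows $\phi^M(\a)=\phi^{M_j}(\a)$ for $\a\in M_j$. The integral step is exactly the defining property of $\Lambda$; the quantifier steps are the Tarski--Vaught argument above. Hence each $M_j\preccurlyeq M$.

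Finally, apply Proposition \ref{exist1} to get the $L$-structure $\bar M=K$. The map $\pi$ is injective because $\rho$ is a genuine metric on $M$, since it is one on each $M_j$. Therefore $M_j\preccurlyeq\bar M$.

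To avoid circularity, in the construction of $\mu$ the values $\phi^M$ should be taken as already defined by the pointwise clause $\phi^M(b,\a):=\phi^{M_k}(b,\a)$. The induction then only verifies that these values agree with the semantic clauses computed in $(M,\mu)$, which is precisely what the Tarski--Vaught argument and the choice of $\Lambda$ give.
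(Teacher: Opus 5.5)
Your proposal is correct and follows essentially the same route as the paper: you define the union functions $\phi^M(\cdot,\bar a)=\bigcup_i\phi^{M_i}(\cdot,\bar a)$, represent the functional $\Lambda$ given by the integrals in the $M_j$ via Corollary \ref{KR}, prove elementarity by induction on formulas (the integral step being exactly the defining property of $\Lambda$), and then complete using Proposition \ref{exist1}. You also spell out details the paper leaves implicit, namely the well-definedness and positivity of $\Lambda$, the $\sup$/$\inf$ step, and the injectivity of $\pi$.
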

\begin{proof} The metric structure of $M$ is defined in the natural way,
e.g. $R^M=\cup_i R^{M_i}$. Let $\phi(\x,y)$ be a formula and $\a\in M_i$.
Then, $\phi^{M_j}(\a,y)$ extends $\phi^{M_i}(\a,y)$ whenever $i\leqslant j$.
Let $G$ be the family of all functions $\cup_i\phi^{M_i}(\a,y)$ where $\a\in\cup_{i} M_i$.
This is a majorating subspace of $\mathbf{C}_b(M)$.
If $\a\in M_j$, set $$\Lambda(\cup_i\phi^{M_i}(\a,y))=\int \phi^{M_j}(\a,y)\ d\mu_j.$$
By the assumptions, $\Lambda_0$ is a well-defined positive linear functional on $G$. Apply Corollary \ref{KR}
to find a regular Borel charge $\mu$ on $\mathcal A(M)$ representing $\Lambda$.
So, for every formula $\phi(\x,y)$ and $\a\in M_j$, we have that $$\int \phi^{M_j}(\a,y)\ d\mu_j=\int\cup_i\phi^{M_i}(\a,y)\ d\mu.$$
We can now prove that $M_i\preccurlyeq M$ for each $i$. For this purpose, we show by induction
on the complexity of $\phi$ that for each $i$, whenever $\a\in M_i$, one has that $\phi^{M_i}(\a)=\phi^M(\a)$.
We check the integral case. Suppose the claim holds for $\phi(\x,y)$. Then, if $\a\in M_j$,
$$\int\phi^M(\a,x)d\mu=\int(\cup_i\phi^{M_i}(\a,x))d\mu=\int\phi^{M_j}(\a,x)d\mu_j.$$
Now, use Proposition \ref{exist1}
\end{proof}

Let $(N,\mathcal B,\nu)$ be a structure and $M$ be a metric substructure of $N$.
Let $\nu_M$ be the subspace measure defined above.
If every $Y\in\mathcal B$ with positive measure has non-empty intersection with $M$,
then $\nu(Y)=\nu_M(Y\cap M)$ for every such $Y$.
Also, in this case, if $f:N\rightarrow\Rn$ is bounded and $(\mathcal B,\mathcal H(\Rn))$-measurable,
then $$\int fd\nu=\int f|_M\ d\nu_M.$$

\begin{proposition}\label{downward2} \emph{(Downward)}
Let $|L|+\aleph_0\leqslant\kappa$. Assume $(N,\nu)$ is a $L$-structure and $X\subseteq N$
has density character $\leqslant\kappa$.
Then, there exists $X\subseteq M\preccurlyeq N$ such that $\mathsf{dc}(M)\leqslant\kappa$.
\end{proposition}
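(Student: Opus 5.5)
The plan is to adapt the classical downward L\"owenheim--Skolem argument to the charged setting. There are two tasks: close a small set under witnesses for the quantifiers $\sup$ and $\inf$, and then equip the resulting subspace with a charge that reproduces the integrals computed in $N$. I will use the Tarski--Vaught style criterion, proved by induction on formulas, rather than quantifier elimination.

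First, fix a dense subset $X_0\subseteq X$ of size $\leqslant\kappa$. Let $\Phi$ be the set of formulas whose real coefficients are rational. By Lipschitz continuity and the bounds ${\sf b}_\phi$, every formula is uniformly approximated by formulas in $\Phi$, and $|\Phi|\leqslant|L|+\aleph_0\leqslant\kappa$.

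Next, build a countable chain $X_0\subseteq X_1\subseteq\cdots$ of subsets of $N$, each of size $\leqslant\kappa$. Given $X_n$, for every $\phi(\x,y)\in\Phi$, every tuple $\a\in X_n$ and every $k$, choose $b\in N$ with $\phi^N(\a,b)>\sup_y\phi^N(\a,y)-1/k$, and similarly for $\inf$. Put all these witnesses into $X_{n+1}$ together with the values of the terms on $X_n$. Let $M_0=\bigcup_n X_n$ and let $M$ be its closure in $N$. Then $M$ is closed under the function symbols and $\mathsf{dc}(M)\leqslant\kappa$.

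Now I define the charge on $M$. Let $G$ be the vector space of restrictions to $M$ of the functions $\phi^N(\a,y)$, where $\phi\in\Phi$ (or all formulas) and $\a\in M_0$. It contains the constants, so it is majorizing in $\mathbf C_b(M)$. Set $\Lambda(\phi^N(\a,y)|_M)=\int\phi^N(\a,y)\,d\nu$.

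The main obstacle is showing that $\Lambda$ is well defined and positive. Both follow from one implication: if $\phi^N(\a,y)\geqslant0$ on $M$, then $\int\phi^N(\a,y)\,d\nu\geqslant0$. The construction gives this. Since $\inf_y\phi(\a,y)$ is witnessed in $M$ up to any $1/k$, we get $\inf_{y\in N}\phi^N(\a,y)=\inf_{y\in M}\phi^N(\a,y)\geqslant0$. So $\phi^N(\a,y)\geqslant0$ on all of $N$, and its integral is nonnegative. Applying the same argument to $\phi-\psi$ handles well-definedness.

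Corollary~\ref{KR} then gives a regular Borel charge $\mu$ on $M$ with $\int f\,d\mu=\Lambda(f)$ for $f\in G$. Here $M$ is complete as a closed subset of the complete space $N$, so $(M,\mu)$ is an $L$-structure.

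Finally, I prove $\phi^M(\a)=\phi^N(\a)$ by induction on $\phi$, first for $\a\in M_0$ and then for all of $M$ by density and Lipschitz continuity. The atomic and connective cases are immediate. The $\sup$ and $\inf$ cases use the witnesses, and irrational coefficients are handled by approximating with formulas in $\Phi$. For the integral case,
$$\Big(\int\phi\,dy\Big)^M(\a)=\int\phi^M(\a,y)\,d\mu=\int\phi^N(\a,y)|_M\,d\mu=\int\phi^N(\a,y)\,d\nu,$$
using the induction hypothesis and the choice of $\mu$.

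One caveat concerns how the pieces fit together. The witness property is needed for every formula, including those containing $\int$, and the charge on $M$ is needed before the integral case of the induction. There is no circularity, though. The witnessing and the definition of $\Lambda$ refer only to values computed in $N$. The induction afterwards only needs the equalities given by Corollary~\ref{KR} together with the witnesses.
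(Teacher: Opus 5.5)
Your proof is correct, but it takes a different route from the paper's.

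\textbf{The paper's route.} It builds two chains side by side: metric substructures $M_k$ and Boolean subalgebras $\mathcal C_k\subseteq\mathcal A(N)$. The algebras are generated by the sets $\phi^N(\a,x)^{-1}([0,\infty))$. Besides the $\sup$-witnesses, it adds a point $a_X$ from every $X\in\mathcal C_k$ of positive measure, so that every such set meets $M$. The charge on $M$ is then an extension of the trace charge $(\nu|_{\mathcal C})_M$. The integral step of the induction compares the measures of the level sets $(\phi^M)^{-1}([r,s))$ and $(\phi^N)^{-1}([r,s))$.

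\textbf{Your route.} You notice that the $\inf$-witnesses alone already give: $\phi^N(\a,\cdot)\geqslant0$ on $M$ implies $\phi^N(\a,\cdot)\geqslant0$ on $N$. Hence $f\mapsto\int f\,d\nu$ induces a well-defined positive linear functional on the restrictions to $M$. Corollary~\ref{KR} then produces the charge directly. This is the same device the paper uses for Proposition~\ref{chain2} and in the Henkin construction.

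\textbf{What each buys.} Your approach needs no Boolean algebras and no extra points $a_X$. It also needs no argument that the trace charge is well defined and extends to the Borel algebra. Your $\mu$ is regular by construction, whereas the paper's ``any extension'' would still have to be regularized. It also shows that condition (i) of Proposition~\ref{tarski} alone suffices, provided one is free to choose the charge on $M$. In exchange, the paper's approach gives a more concrete charge: it agrees with $\nu$ on the traces of the definable level sets.

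\textbf{One small point.} Your witnesses were chosen only for formulas in $\Phi$. So the positivity step for elements of $G$ with irrational coefficients needs the uniform approximation by $\Phi$ that you set up, not the witnesses directly.
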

\begin{proof}
We construct chains $$M_0\subseteq M_1\subseteq\ \cdots\ \subseteq N$$
$$\mathcal{C}_0\subseteq\mathcal{C}_1\subseteq\ \cdots\ \subseteq\mathcal{A}(N)$$
of incomplete metric substructures and Boolean subalgebras such that $|M_k|\leqslant\kappa$ and $|\mathcal{C}_k|\leqslant\kappa$.
We may assume without loss that $X\subseteq M_0$.
Also, set $\mathcal{C}_0=\{\emptyset,N\}$. Suppose that $M_k$ and $\mathcal{C}_k$ are constructed.
We construct $M_{k+1}$ and $\mathcal{C}_{k+1}$ as follows.
For each $X\in\mathcal{C}_k$ of positive measure take $a_X\in N$. Also, for each
$n\geqslant1$ and $f(x)=\phi^N(\a,x)$ where $\phi$ is a rational formula and $\a\in M_k$
take $b_{nf}\in N$ such that $$\sup_x f(x)-\frac{1}{n}\leqslant f(b_{nf}).$$
Let $M_{k+1}$ be the metric prestructure of $N$ generated by $M_k$ and the points $a_X$, $b_{f,n}$ defined so.
Also, let $\mathcal{C}_{k+1}\subseteq\mathcal{A}(N)$ be the Boolean subalgebra
generated by sets of the form $f^{-1}([0,\infty))$ where $f$ is as above.
Set $$M=\overline{\bigcup_k M_k},\ \ \ \ \ \ \ \ \ \mathcal{C}=\bigcup_k\mathcal{C}_k.$$
Note that every $A\in\mathcal{C}$ of positive measure has non-empty intersection with $M$.
Let $\mu$ be any extension of $(\nu|_{\mathcal C})_M$ to a Borel charge on $M$.
Then, $(M,\mu)$ is a structure in $L$. We show that for every $L$-formula $\phi$ and $\a\in M$,
one has that $\phi^M(\a)=\phi^N(\a)$. This is done by induction on the complexity of $\phi$.
We verify the integral case. Suppose that the claim holds for $\phi(y)$.
Then, for every $r<s$, the sets $(\phi^M)^{-1}([r,s))$ and $(\phi^N)^{-1}([r,s))$
have the same measure. So, by the definition of integral, we have that $\int\phi^M(y)d\mu=\int\phi^N(y)d\nu$.
\end{proof}

\begin{proposition}\label{tarski} \emph{(Tarski-Vaught's test)}
Assume $M\subseteq N$ and for every formula $\phi(\a,y)$ where $\a\in M$ one has that\\
(i) $\inf\{\phi^N(\bar{a},c)\ | \ c\in N\}=\inf\{\phi^N(\bar{a},b) \ | \ b\in M\}$\\
(ii) For any rational $r_i,s_i$ and $\a\in M$, if $\bigcap_{i=1}^n\phi_i^N(\a,y)^{-1}([r_i,s_i))$
is nonempty, it has nonempty intersection with $M$.\\ Then, $M\preccurlyeq N$.
\end{proposition}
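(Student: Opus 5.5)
We argue by induction on the complexity of $\phi(\x)$ that $\phi^M(\a)=\phi^N(\a)$ for all $\a\in M$. Here $M$ carries its own charge $\mu$ and $N$ carries $\nu$. The atomic case holds because $M\subseteq N$ is an embedding, and the connective cases are immediate. The $\inf$ case follows from (i): $(\inf_y\phi)^M(\a)=\inf_{b\in M}\phi^M(\a,b)=\inf_{b\in M}\phi^N(\a,b)=\inf_{c\in N}\phi^N(\a,c)$. The $\sup$ case follows by applying (i) to $-\phi$. The whole difficulty is the integral case, where the charges of $M$ and $N$ are a priori unrelated.

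For the integral case I first have to fix the reading of the statement. As stated, with $\mu$ arbitrary, the conclusion fails: for a first order $M=N$ with two different ultracharges, (i) and (ii) hold trivially but integrals differ. So I will read the proposition as in the Downward Löwenheim--Skolem argument, namely that $M$ is equipped with the subspace charge $\nu_M$ induced from $\nu$, possibly extended to a Borel charge on $M$. The proof should then make this explicit.

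The key step is to show that $\nu_M(Y\cap M)=\nu(Y)$ for every set $Y$ in the Boolean algebra $\mathcal C$ generated by the sets $\phi^N(\a,y)^{-1}([r,s))$ with $\a\in M$ and $r,s$ rational. Every element of $\mathcal C$ is a finite disjoint union of finite intersections of such basic sets and complements of them, and each complement $\phi^{-1}([r,s))^c$ is a finite union of sets $\phi^{-1}([r',s'))$ by boundedness of $\phi$. Hence every nonempty $Y\in\mathcal C$ is a finite disjoint union of sets of the form in (ii), and condition (ii) gives $Y\cap M\neq\emptyset$. Thus every $Y\in\mathcal C$ of positive measure meets $M$. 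This is exactly the hypothesis of the remark preceding Proposition \ref{downward2}, applied with $\mathcal C$ in place of $\mathcal B$. It yields $\nu(Y)=\nu_M(Y\cap M)$ and $\int f\,d\nu=\int f|_M\,d\nu_M$ for bounded $(\mathcal C,\mathcal H_{\Qn}(\Rn))$-measurable $f$.

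Now take $f=\phi^N(\a,y)$. This function is $(\mathcal C,\mathcal H_{\Qn}(\Rn))$-measurable by construction of $\mathcal C$, and by the induction hypothesis $f|_M=\phi^M(\a,y)$. So for all rational $r<s$, the level sets of $\phi^M(\a,\cdot)$ and $\phi^N(\a,\cdot)$ have the same measure. Since bounded measurable integrals are limits of rational simple-function sums, this gives $\int\phi^M(\a,y)\,d\mu=\int\phi^N(\a,y)\,d\nu$. If $\mu$ is only some Borel extension of $\nu_M|_{\mathcal C\cap M}$, the integral of $\phi^M(\a,\cdot)$ depends only on the values on $\mathcal C\cap M$, so the conclusion is unaffected. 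I expect the main obstacle to be the bookkeeping of this step: checking that complements and finite Boolean combinations reduce to the intersections covered by (ii), and stating precisely which charge $M$ carries.
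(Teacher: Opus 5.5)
Your proposal is correct and follows the paper's proof: induction on formulas, with the $\sup$/$\inf$ cases from (i), and the integral case from the fact that, by (ii), every set of positive measure in the Boolean algebra generated by the level sets $\phi^N(\a,y)^{-1}([r,s))$ meets $M$, so the integrals computed with the induced subspace charge coincide. Your explicit stipulation that $M$ carries the charge induced from $\nu|_{\mathcal C}$ (as in the proof of Proposition \ref{downward2}), and your use of parameters from $M$ rather than $N$ when generating $\mathcal C$, are exactly the assumptions the paper's terse proof needs but leaves implicit; your example of $M=N$ with two different charges correctly shows that the statement cannot be read literally.
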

\begin{proof}
We prove by induction on the complexity of $\phi(\x)$ that for each $\a\in M$, $\phi^M(\a)=\phi^N(\a)$.
Let us verify the integral case. Assume the claim holds for $\psi(\x,y)$.
Let $\mathcal B$ be the Boolean algebra generated by set of the form $\phi^N(\a,y)^{-1}([r_i,s_i))$
where $\phi$ is a formula and $\a\in N$. The condition (ii) implies that any $Y\in\mathcal B$
with positive measure has non-empty intersection with $M$. Hence, $\int\psi^M(\a,y)dy=\int\psi^N(\a,y)dy$ for any $\a$.
\end{proof}

\section{Affine compactness}\label{section compactness}
Let $L$ be a language. A condition is an expression of the form $\phi\leqslant\psi$ where $\phi,\psi$ are formulas.
A condition $\phi\leqslant\psi$ where $\phi$, $\psi$ are sentences is satisfiable if there is a structure $M$
such that $\phi^M\leqslant\psi^M$. Any set of such conditions is called a theory. A theory $T$ is satisfiable if
there is a structure satisfying every condition in $T$. The notion $T\vDash\phi\leqslant\psi$ is defined as usual.
We give some easy examples.

\begin{example}
\em{\noindent 1. Let $L=\{+,\cdot,0,1\}$ and $\mu$ be an extension of the natural density to an ultracharge on $\Nn$.
Then, the condition $$\int(1-\inf_{yz}\rho(x,y^2z))dx=\frac{6}{\pi^2}$$
states that the density of the set of square-free numbers is $\frac{6}{\pi^2}$.
Similarly, that the density of prime numbers is zero is expressible.

\noindent2. Let $L=\{\wedge,\vee,0,1\}$ and $M=\{0,1\}$ equipped with the discrete metric and charge
$$\mu(0)=\mu(1)=\frac{1}{2}.$$
The metric theory of $M$ eliminates the quantifiers $\sup$-$\inf$ and every metric formula $\phi(\x,y)$
is equivalent to a linear combination of formulas $\rho(t(\x),y)$ where $t(\x)$ is term.
So, since $$\int\rho(x,y)dy=\frac{1}{2},$$ the integral theory of $M$ eliminates the quantifier $\int$.
We conclude that the AL$_{\int}$-theory of $M$ has the same definable relations as the AL-theory of $M$.
\bigskip

\noindent3. Consider the unit interval $[0,1]$ equipped with the Euclidean metric and Lebesgue
measure. Then $\int \rho(x,y)dy=x^2-x+\frac{1}{2}$. So, $x(1-x)$ is a definable relation here.
\bigskip

\noindent4. The condition $\inf_x\int\rho(x,y)dy=0$ states that $\mu$ is a filter containing
the family of open balls around some $a$.
To see this, for each $n$, take $a_n$ such that $\int\rho(a_n,y)dy<\frac{1}{2^{n+1}}$. Then,
$$\mu(B(a_n,\frac{1}{2^n}))\geqslant1-\frac{1}{2^n}.$$
So, we must necessarily have that $d(a_n,a_{n+1})<\frac{1}{2^{n-1}}$. If $a_n$ converges to $a$,
every open neighborhood of $a$ has charge $1$.}
\end{example}
\bigskip

A theory $T$ is affinely satisfiable if for every conditions $\phi_i\leqslant\psi_i$ in $T$ where $i=1,...,k$
and every $r_1,...,r_k\geqslant0$, the condition $\sum_ir_i\phi_i\leqslant\sum_i\ r_i\psi_i$ is satisfiable.
We prove affine compactness by both ultramean and Henkin's method.
\bigskip

\noindent{\bf Ultramean construction}\\
Let $I$ be a nonempty index set and $\wp$ an ultracharge on $I$.
For each $i\in I$, let $(M_i,\rho_i,\mu_i)$ be a $L$-structure.
For $a,b\in \prod_{i\in I} M_i$ set $$\rho(a,b)=\int_I \rho_i(a_i,b_i)\ d\wp.$$
Recall that $\int_I$ denotes integration with respect to $\wp$ and $\int$ integration with respect $\mu$.
Clearly, $\rho$ is a pseudometric and $\rho(a,b)=0$ is an equivalence relation.
We denote the resulting quotient space by $M=\prod_{\wp} M_i$ and
the induced metric on it again by $\rho$.
Also, the equivalence class of $(a_i)$ is denoted by $[a_i]$.
Define a $L$-prestructure on $M$ as follows.
For each constant symbol $c$, $n$-ary function symbol $F$ and $n$-ary relation symbol $R$ set
$$c^M=[c^{M_i}]$$
$$F^M([a_i^1],...,[a_i^n])=[F^{M_i}(a_i^1,...,a)i^n)]$$
$$R^M([a_i^1],...,[a_i^n])=\int_I R^{M_i}(a_i^1,...,a_i^n)\ d\wp.$$
Then, $M$ is a (incomplete) metric structure in $L$. Now, we define a Borel charge on $M$.

Assume $f_i:M_i\rightarrow\Rn$ is $\lambda$-Lipschitz and $\|f_i\|_{\infty}\leqslant \mathbf b$ for every $i\in I$.
Then the function $$f([x_i])=\int_I f_i(x_i)\ d\wp$$
is a well-defined $\lambda$-Lipschitz function on $M$ and $\|f\|_{\infty}\leqslant\mathbf b$.
Denote this function by $[f_i]$. The family of all functions of the form $[f_i]$
(for some $\lambda$ and $\mathbf b$), forms a majorizing linear subspace $G$ of $\mathbf{C}_b(M)$.
In particular, $$r[f_i]+s[g_i]=[rf_i+sg_i].$$ Define a functional $\Lambda$ by setting for each $[f_i]\in G$
$$\Lambda([f_i])=\int_I\int f_id\mu_i d\wp.$$
We show that $\Lambda$ is well-defined. 
Equivalently, if $[f_i]=0$ then $\int_I\int f_id\mu_id\wp=0$.
For this purpose, it is sufficient to prove that $\Lambda$ is positive, i.e.
$$\Big[\forall(x_i)\ \ \ \ \ 0\leqslant\int_I f_i(x_i)d\wp\Big]\ \ \ \Longrightarrow\ \ \ 0\leqslant\int_I\int f_id\mu_i d\wp.$$
Assume $r_i=\inf_xf_i(x)$. Given $\epsilon>0$, choose $x_i\in M_i$ such that $f_i(x_i)\leqslant r_i+\epsilon$.
Then $$0\leqslant\int_I f_i(x_i)d\wp\leqslant\int_I r_id\wp+\epsilon.$$
This shows that $0\leqslant\int_I r_id\wp$.
Now, since $r_i\leqslant f_i(x_i)$ for any $x_i\in M_i$, we conclude that for each $i$,
$$r_i\leqslant\int f_i d\mu_i$$ and hence $$0\leqslant\int_I\int f_id\mu_i d\wp.$$
The linearity of $\Lambda$ is obvious.

By Corollary \ref{KR}, 
we get a regular Borel charge $\mu$ on $\mathcal A(M)$
such that for every $f=[f_i]$ $$\int f\ d\mu=\Lambda(f)=\int_I\int f_id\mu_i d\wp.$$
To summarize, writing $x=[x_i]$, we have that
$$\int \int_I f_i(x_i)d\wp \ d\mu=\int_I\int f_i(x_i)d\mu_i  d\wp. \ \ \ \ \ \ \ \ \ (\dag)$$

\begin{theorem} \emph{(Ultramean)}
For every AL$_{\int}$-formula $\phi(x_1,...x_n)$ and $[a^1_i],...,[a^n_i]$,
$$\phi^M([a^1_i],...,[a^n_i])=\int_I\phi^{M_i}(a^1_i,...,a^n_i)\ d\wp.$$
\end{theorem}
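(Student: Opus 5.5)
We prove the statement by induction on the complexity of $\phi$, simultaneously for all tuples of elements. Atomic formulas are handled by the definitions of $c^M$, $F^M$, $R^M$ and $\rho$. For terms we first check, by an easy induction, that $t^M([a^1_i],\dots,[a^n_i])=[t^{M_i}(a^1_i,\dots,a^n_i)]$. The atomic case then reads $R^M$ evaluated at classes of terms, which is by definition $\int_I R^{M_i}(t_1^{M_i}(\a_i),\dots)\,d\wp$. The formula $1$ is trivial since $\wp(I)=1$. The connectives $+$ and $r\cdot$ follow from the linearity of $\int_I$, which is available because all functions involved are bounded. Here $\phi^{M_i}$ is bounded by ${\sf b}_\phi$ uniformly in $i$, so every function of $i$ that appears is integrable with respect to the ultracharge $\wp$ on the power set.

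\emph{The integral case.} Suppose the claim holds for $\phi(\x,y)$ and fix $\a=[\a_i]$. Put $f_i=\phi^{M_i}(\a_i,\cdot)$. These functions are uniformly $\lambda_\phi$-Lipschitz and uniformly bounded by ${\sf b}_\phi$. By the induction hypothesis, $\phi^M(\a,[x_i])=\int_I f_i(x_i)\,d\wp=[f_i]([x_i])$, so $\phi^M(\a,\cdot)$ is exactly the element $[f_i]$ of $G$. Applying $(\dag)$ gives
$$\Big(\int\phi\,dy\Big)^M(\a)=\int [f_i]\,d\mu=\int_I\int f_i\,d\mu_i\,d\wp=\int_I\Big(\int\phi\,dy\Big)^{M_i}(\a_i)\,d\wp.$$
This case is therefore immediate, and the construction of $\mu$ was designed for precisely this step.

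\emph{The quantifier case.} This is the main obstacle, because $\int_I$ need not commute with $\sup$. It suffices to treat $\sup_y$; $\inf_y$ follows by applying the same argument to $-\phi$, using the $r\cdot$ case.

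For ``$\leqslant$'': for every $[b_i]$ we have $\phi^M(\a,[b_i])=\int_I\phi^{M_i}(\a_i,b_i)\,d\wp\leqslant\int_I\sup_y\phi^{M_i}(\a_i,y)\,d\wp$, by monotonicity of $\int_I$.

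For ``$\geqslant$'': given $\epsilon>0$, choose for each $i$ (using choice) some $b_i\in M_i$ with $\phi^{M_i}(\a_i,b_i)\geqslant\sup_y\phi^{M_i}(\a_i,y)-\epsilon$. Integrating over $I$ gives
$$\phi^M(\a,[b_i])\geqslant\int_I\sup_y\phi^{M_i}(\a_i,y)\,d\wp-\epsilon.$$
The same device was used in the positivity proof for $\Lambda$. It works because a bounded function on $I$ is always $\wp$-integrable, as $\wp$ is an ultracharge. Since $\epsilon>0$ is arbitrary, equality follows.

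Finally, $M$ as constructed is only a prestructure, possibly incomplete. If the theorem is meant for its completion, we transfer it by Proposition \ref{exist1}, which preserves the values of all formulas at points of $M$.
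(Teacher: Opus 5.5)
Your proposal is correct and follows the same route as the paper. It argues by induction on formulas: the $\sup$ case uses coordinatewise $\epsilon$-witnesses $b_i$ and monotonicity of $\int_I$, and the integral case observes that $\phi^M(\a,\cdot)$ is the element $[f_i]$ of $G$ and applies $(\dag)$. Your explicit handling of the parameters $\a$ (which the paper suppresses by taking $|\x|=0$) and your remark about passing to the completion via Proposition \ref{exist1} are harmless refinements of the same argument.
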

\begin{proof} We prove the claim by induction on the complexity of formulas.
The atomic as well as the connective cases are obvious.
Assume the claim is proved for $\psi(\x,y)$ and $\phi=\sup_y\psi(\x,y)$. We assume $|\x|=0$.
Given $\epsilon>0$, take $b_i$ such that $\phi^{M_i}-\epsilon\leqslant\psi^{M_i}(b_i)$.
Then $$\int_I\phi^{M_i}d\wp-\epsilon\leqslant\int_I\psi^{M_i}(b_i)d\wp=\psi^M(b)\leqslant\phi^{M}.$$
Conversely, given $\epsilon>0$, take $b$ such that $\phi^M-\epsilon\leqslant\psi^M(b)$.
Then $$\phi^{M}-\epsilon\leqslant\psi^M(b)=\int_I\psi^{M_i}(b_i)d\wp\leqslant\int_I\phi^{M_i}d\wp.$$
We conclude that $\phi^M=\int_I\phi^{M_i}d\wp$.
Similarly, we prove the claim for $\phi=\int\psi(y)dy$ assuming it is proved for $\psi(y)$.
By the induction hypothesis, for each $y=[y_i]\in M$
$$\psi^M(y)=\int_I\psi^{M_i}(y_i)d\wp=[\psi^{M_i}]([y_i]).$$
Therefore, by the equality $(\dag)$,
$$\phi^M=\int\psi^M(y)\ d\mu=\int\int_I\psi^{M_i}(y_i)d\wp\ d\mu$$
$$=\int_I\int \psi^{M_i}(y_i)d\mu_i\ d\wp=\int_I\phi^{M_i}d\wp.$$
\end{proof}

\begin{remark}
\emph{If $\mathcal F$ is an ultrafilter, one proves in a similar way that for every CL$_{\int}$-formula $\phi(\x)$
one has that $$\phi^M([a^1_i],...,[a^n_i])=\lim_{i,\mathcal F}\phi^{M_i}(a^1_i,...,a^n_i).$$}
\end{remark}

If $M_i=N$ for every $i$, then $N^\wp=\prod_\wp M_i$ is the \emph{powermean} of $N$ denoted by $N^\mu$.

\begin{corollary}
The diagonal map $a\mapsto[a]$ is an elementary embedding of $N$ into $N^\wp$.
\end{corollary}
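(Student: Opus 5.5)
This follows directly from the Ultramean theorem applied to the constant family $M_i=N$, together with the fact that $\wp$ is a probability charge on $I$. For $a\in N$, write $[a]$ for the class of the constant sequence $(a)_{i\in I}$. I would first check that the map $a\mapsto[a]$ is well defined and isometric: by the definition of the metric on $N^\wp$,
$$\rho([a],[b])=\int_I\rho(a,b)\,d\wp=\rho(a,b),$$
since the integral of a constant function with respect to $\wp$ equals that constant ($\wp(I)=1$). In particular the map is injective into the quotient $N^\wp$.

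Next, let $\phi(x_1,\dots,x_n)$ be any AL$_{\int}$-formula and $a_1,\dots,a_n\in N$. Apply the Ultramean theorem to the elements $[a_1],\dots,[a_n]$, whose $i$-th coordinates are the constant sequences $a^k_i=a_k$. This gives
$$\phi^{N^\wp}([a_1],\dots,[a_n])=\int_I\phi^N(a_1,\dots,a_n)\,d\wp=\phi^N(a_1,\dots,a_n),$$
again because the integrand does not depend on $i$ and $\wp$ is a probability charge. This is exactly the condition for an elementary embedding. It covers the quantifier-free formulas as well, so the map is in particular an embedding.

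There is one subtlety, which I expect to be the only real obstacle. The ultramean $M=\prod_\wp M_i$ was built as an incomplete prestructure, and the charge on it was obtained from Corollary \ref{KR}. If $N^\wp$ denotes the completed structure, I would compose with the dense map of Proposition \ref{exist1}. That proposition preserves the values of all formulas, so the diagonal map followed by the completion map is still elementary. Everything else is immediate from the Ultramean theorem, so no new induction on formulas is needed; in particular the integral case is already handled there via $(\dag)$.
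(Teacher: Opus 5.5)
Your proposal is correct and is the argument the paper intends; the paper states the corollary without proof right after the Ultramean theorem. You specialize that theorem to constant sequences $M_i=N$ and use $\wp(I)=1$ to get $\int_I\phi^N(\bar a)\,d\wp=\phi^N(\bar a)$, and your extra step of composing with the dense map of Proposition \ref{exist1} is a sensible precaution, since the paper explicitly treats the ultramean as a possibly incomplete prestructure.
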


As in the AL case, one has that

\begin{theorem} \label{affine compactness}
(Affine compactness) Every affinely satisfiable theory $\Sigma$ is satisfiable.
\end{theorem}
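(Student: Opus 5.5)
We will prove Theorem \ref{affine compactness} with the ultramean construction, following the standard argument for AL. Let $\Sigma$ be affinely satisfiable. Since each condition $\phi\leqslant\psi$ is equivalent to $0\leqslant\psi-\phi$, we may assume every condition in $\Sigma$ has the form $0\leqslant\sigma$ with $\sigma$ a sentence. Let $C$ be the set of these sentences and let $V$ be the real vector space spanned by all $L$-sentences. The \emph{cone} generated by $C$ is
$$D=\Big\{\sum_{k} r_k\sigma_k:\ r_k\geqslant0,\ \sigma_k\in C\Big\}.$$
Affine satisfiability means that for every $\theta\in D$ there is a structure $M_\theta$ with $\theta^{M_\theta}\geqslant0$.

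The key step is to find a single ultracharge $\wp$ on the index set $I=D$ such that
$$\int_I \sigma^{M_\theta}\,d\wp\ \geqslant0\qquad\mbox{for every }\sigma\in C.$$
Once this holds, the Ultramean Theorem gives $\sigma^{M}=\int_I\sigma^{M_\theta}d\wp\geqslant 0$ for $M=\prod_\wp M_\theta$. Proposition \ref{exist1} then passes to the completion, which is a genuine $L$-structure satisfying $\Sigma$.

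To get $\wp$, consider the space $\ell^\infty(I)$. Each sentence $\sigma$ gives a function $\hat\sigma(\theta)=\sigma^{M_\theta}$, which is bounded by ${\sf b}_\sigma$. An ultracharge on $I$ is the same thing as a positive linear functional $\Lambda$ on $\ell^\infty(I)$ with $\Lambda(1)=1$, since integration is then defined. So it suffices to find such a $\Lambda$ with $\Lambda(\hat\sigma)\geqslant0$ for all $\sigma\in C$.

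Let $P$ be the convex cone in $\ell^\infty(I)$ generated by the nonnegative functions together with $\{\hat\sigma:\sigma\in C\}$. We must show that $-1$ lies at positive distance from $P$ in sup norm. Suppose instead that $g+\sum_k r_k\hat\sigma_k\leqslant -1+\epsilon$ everywhere, with $g\geqslant0$ and $\epsilon<1$. Evaluating at the index $\theta=\sum_k r_k\sigma_k\in I$ gives $\theta^{M_\theta}\leqslant-1+\epsilon<0$. This contradicts the choice of $M_\theta$. (When all $r_k=0$ the same evaluation gives $0\leqslant -1+\epsilon$, again a contradiction.)

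Hahn–Banach separation in the normed space $\ell^\infty(I)$ then yields a continuous linear functional $\Lambda$ that is nonnegative on $P$ and strictly negative at $-1$. Nonnegativity on $P$ makes $\Lambda$ positive, and $\Lambda(1)>0$, so we normalize to $\Lambda(1)=1$. Alternatively, one can apply Theorem \ref{ext1} on a suitable majorizing subspace after first defining $\Lambda$ on the span of $1$ and the $\hat\sigma$'s.

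The main obstacle is this separation step: positivity must be arranged together with $\Lambda(\hat\sigma)\geqslant0$. Evaluating at the single index $\theta=\sum r_k\sigma_k$, which is why we index by $D$, is exactly what makes the cone argument go through. Everything after that is automatic from the Ultramean Theorem.
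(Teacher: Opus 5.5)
Your proof is correct, but it is organized differently from the paper's.

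\textbf{The paper's route.} It first passes to a maximal affinely satisfiable $\Sigma$ and forms the sublinear functional $q(\sigma)=\inf\{r:\ \sigma\leqslant r\in\Sigma\}$ on the space $\mathbb{D}(L)$ of sentences. The dominated Hahn--Banach theorem then extends the identity on the constants to a linear $T\leqslant q$; in effect $T$ is an affine complete theory containing $\Sigma$. Only after that does it choose an index set: $I$ is a set of representatives of \emph{all} complete theories. This choice makes $\sigma\mapsto(\sigma^{M_i})_i$ embed $\mathbb{D}(L)$ into $\mathbf{C}_b(I)$ compatibly with $T$. Corollary~\ref{KR} (Kantorovich plus Riesz) then turns $T$ into a charge on $I$, whose ultramean realizes $T$.

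\textbf{Your route.} You skip both the maximality step and the intermediate functional on sentences. You index over the cone $D$ itself, with one witness model per nonnegative combination, so a single separation of $-1$ from the cone $P$ in $\ell^\infty(D)$ yields the ultracharge directly. The evaluation at $\theta=\sum_k r_k\sigma_k$ does the work that two things do in the paper. One is the positivity of $T$, which needs maximality and $q$. The other is the covering of all complete theories, which is what makes the map into $\mathbf{C}_b(I)$ well defined.

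\textbf{What each buys.} Your argument is shorter and more self-contained, needing only one Hahn--Banach separation and the identification of ultracharges with normalized positive functionals on $\ell^\infty$. The paper's argument isolates a reusable fact: every positive normalized linear functional on sentences is the theory of an ultramean of models, over a canonical index set that does not depend on $\Sigma$. That fact fits its later treatment of types as positive functionals.

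Your closing alternative (``define $\Lambda$ on the span of $1$ and the $\hat\sigma$'s, then apply Theorem~\ref{ext1}'') leaves open how to choose those values positively. Supplying exactly those values is the job of the paper's $q$-dominated extension. Your main separation argument does not need it.
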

\begin{proof}
We may assume $\Sigma$ is maximal. For each $\sigma$ let $$q(\sigma)=\inf\{r:\sigma\leqslant r\in\Sigma\}.$$
Then, $p$ is total and sublinear, i.e.
$$q(\sigma+\eta)\leqslant q(\sigma)+p(\eta), \hspace{15mm} q(r\sigma)=rq(\sigma)\ \ \ \ (r\geqslant0).$$
Let $T_0$ be the identity map on $\Rn\subseteq\mathbb{D}(L)$ (the normed vector space of formulas).
Then, $T_0\leqslant q$ on $\Rn$ and by the Hahn-Banach extension theorem (\cite{Aliprantis-Inf}, Th 8.30),
$T_0$ extends to a linear map $T:\mathbb{D}(L)\rightarrow\Rn$ such that $T(\sigma)\leqslant q(\sigma)$ for every $\sigma$.
Note that $T$ is positive.

Let $\{M_i\}_{i\in I}$ be a set containing a model from each equivalence class of the relation $M\equiv N$.
We put the discrete topology on $I$. So, we may write $\Rn\subseteq\mathbb{D}(L)\subseteq\mathbf{C}_b(I)$
if we identify $\sigma\in\mathbb{D}(L)$ with the map $i\mapsto\sigma^{M_i}$.
Clearly, $\mathbb{D}(L)$ majorizes $\mathbf{C}_b(I)$. So, by Corollary{KR},
there is a (maximal) charge $\mu$ on $I$ such that for every $\sigma$ $$T(\sigma)=\int_I\sigma^{M_i}d\mu.$$
Let $N=\prod_{\mu}M_i$. Then, for every $\sigma$ $$\sigma^N=\int_I\sigma^{M_i}d\mu=T(\sigma)\leqslant p(\sigma).$$
Finally, if $\sigma\leqslant\eta\in\Sigma$, then $\sigma^N-\eta^N\leqslant q(0)=0$ and hence $N$ is a model of $\Sigma$.
\end{proof}

A class $\mathcal K$ of $L$-structures is elementary if there is a theory $T$ such that $$\mathcal K=\{M:\ M\vDash T\}.$$
The following propositions are proved as in affine logic.

\begin{proposition} \emph{(Axiomatizability)}
A class $\mathcal K$ is elementary if and only if it is closed under
the ultramean construction and affine equivalence.
\end{proposition}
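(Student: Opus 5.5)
One direction is immediate. If $\mathcal K=\{M:\ M\vDash T\}$, then $\mathcal K$ is closed under ultrameans by the Ultramean theorem: a condition $\phi\leqslant\psi$ between sentences that holds in every $M_i$ satisfies
$$\phi^M=\int_I\phi^{M_i}d\wp\leqslant\int_I\psi^{M_i}d\wp=\psi^M,$$
by monotonicity of the integral. Here ``affine equivalence'' means that $M$ and $N$ satisfy the same conditions, i.e.\ $\sigma^M=\sigma^N$ for all sentences, so $\mathcal K$ is trivially closed under it.

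For the converse, assume $\mathcal K$ is closed under ultrameans and under $\equiv$. Let $T$ be the set of all conditions $\phi\leqslant\psi$ (with $\phi,\psi$ sentences) true in every member of $\mathcal K$. Clearly $\mathcal K\subseteq \mathrm{Mod}(T)$. Now fix $N\vDash T$; the goal is to show $N\in\mathcal K$. By closure under $\equiv$ it suffices to produce an ultramean of members of $\mathcal K$ that is equivalent to $N$. Let $\Sigma$ be the complete theory of $N$, i.e.\ the set of conditions $\sigma\leqslant r$ and $r\leqslant\sigma$ with $r=\sigma^N$, for all sentences $\sigma$. I will show that $\Sigma$ is satisfied by an ultramean of members of $\mathcal K$.

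I would redo the proof of Theorem \ref{affine compactness}, with the index set restricted to $\mathcal K$. Choose a set $\{M_i\}_{i\in I}$ of representatives of the $\equiv$-classes of $\mathcal K$; this is a set because each class is determined by the values $\sigma^{M}$ in $\Rn^{\mathbb D(L)}$. Identify each sentence $\sigma$ with the function $i\mapsto\sigma^{M_i}$ in $\mathbf C_b(I)$, where $I$ carries the discrete topology. Define $\Lambda(\sigma)=\sigma^N$ on this image of $\mathbb D(L)$, which is a majorizing subspace since it contains the constants and each $\sigma$ is bounded. The key step is that $\Lambda$ is well defined and positive. Suppose $\sigma^{M_i}\geqslant 0$ for all $i$. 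Then the condition $0\leqslant\sigma$ holds throughout $\mathcal K$ (every member is equivalent to some $M_i$), so it lies in $T$, and hence $\sigma^N\geqslant0$. Applying this to both $\sigma$ and $-\sigma$ also shows that sentences agreeing on $I$ have the same value in $N$, which gives well-definedness; linearity is then clear.

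By Corollary \ref{KR} there is a charge $\wp$ on $I$, a charge on the power set since $I$ is discrete, with $\wp(I)=\Lambda(1)=1$ and $\sigma^N=\int_I\sigma^{M_i}d\wp$. The ultramean $\prod_\wp M_i$ then satisfies $\sigma^{\prod_\wp M_i}=\sigma^N$ for every sentence $\sigma$, by the Ultramean theorem. This ultramean lies in $\mathcal K$ by hypothesis, and $N$ is equivalent to it, so $N\in\mathcal K$. The main obstacle is the positivity and well-definedness check for $\Lambda$. It hinges on the fact that $T$ consists of \emph{affine} conditions, so it already contains all the information needed to pass inequalities from $\mathcal K$ to $N$, with no finite-combination argument required. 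A smaller point is confirming that Corollary \ref{KR} applies on a discrete $I$ to produce a full ultracharge.
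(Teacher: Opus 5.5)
Your proof is correct and is essentially the argument the paper intends. The paper gives no separate proof and defers to affine logic; your converse direction is exactly its proof of affine compactness: representatives of the $\equiv$-classes indexed by a discrete set, Corollary~\ref{KR} to obtain an ultracharge $\wp$ with $\sigma^N=\int_I\sigma^{M_i}\,d\wp$, then the Ultramean theorem, with positivity of $\Lambda$ coming directly from $N\vDash\mathrm{Th}(\mathcal K)$ rather than from Hahn--Banach.
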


\begin{proposition}
If $|L|+\aleph_0\leqslant\kappa$, every theory which has a model of cardinality
at least $2$ has a model of density character $\kappa$.
\end{proposition}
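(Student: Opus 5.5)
The idea is to combine an upward step (make the model at least $\kappa$-dense) with the downward result, Proposition \ref{downward2}. Fix a theory $T$ with a model $N_0$ in which there are $a\neq b$, say $\rho(a,b)=\delta>0$. We first build a model of $T$ containing $\kappa$ points that are pairwise at distance bounded below by a fixed positive constant. Then the downward theorem cuts it to density character exactly $\kappa$.

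For the upward step I would use the powermean, as in the AL case, rather than new constants: new constants only give conditions that are affinely rather than strictly satisfiable. Let $I$ be a set and $\wp$ an ultracharge on $I$ chosen so that there are $\kappa$ subsets $A_\alpha\subseteq I$ ($\alpha<\kappa$) with
$$\wp(A_\alpha\triangle A_\beta)\geqslant\tfrac12\qquad (\alpha\neq\beta).$$
One way to obtain this is to take $I=\{0,1\}^\kappa$ and let $\wp$ extend the product (coin-tossing) charge on the algebra of cylinder sets. This extension exists by Corollary \ref{KR}, using the discrete topology on $I$, or directly by Theorem \ref{ext1}. Then take $A_\alpha=\{i:\ i(\alpha)=1\}$, so that $\wp(A_\alpha\triangle A_\beta)=\frac12$ for $\alpha\neq\beta$.

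Now define $c_\alpha\in N_0^\wp$ by $c_\alpha(i)=a$ if $i\in A_\alpha$ and $c_\alpha(i)=b$ otherwise. By the definition of the metric on the ultramean,
$$\rho(c_\alpha,c_\beta)=\int_I\rho(c_\alpha(i),c_\beta(i))\,d\wp=\delta\,\wp(A_\alpha\triangle A_\beta)=\delta/2 .$$
So $N_0^\wp$ (or its completion, via Proposition \ref{exist1}) contains a $\delta/2$-separated set of size $\kappa$, and hence has density character $\geqslant\kappa$. By the corollary on the diagonal map, $N_0\preccurlyeq N_0^\wp$, so $N_0^\wp\equiv N_0$ and it is a model of $T$. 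If $N_0$ already has density character $\geqslant\kappa$, this step is unnecessary.

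Next, let $X=\{c_\alpha:\alpha<\kappa\}$, which has density character $\kappa$. Proposition \ref{downward2}, applied with $|L|+\aleph_0\leqslant\kappa$, gives $X\subseteq M\preccurlyeq N_0^\wp$ with $\mathsf{dc}(M)\leqslant\kappa$. Since $M$ contains the $\delta/2$-separated set $X$, we also have $\mathsf{dc}(M)\geqslant\kappa$, so $\mathsf{dc}(M)=\kappa$. As $M\preccurlyeq N_0^\wp\equiv N_0$, $M$ is a model of $T$.

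The main obstacle is the construction of the ultracharge $\wp$ with $\kappa$ "independent" sets. The key points are that the coin-tossing functional on the cylinder algebra is finitely additive and positive, and that its extension to $\wp(I)$ is a charge. This amounts to checking that Corollary \ref{KR} applies to the subspace of cylinder-measurable simple functions, which majorizes $\mathbf C_b(I)$ because it contains the constants. A further, minor point is whether the quotient identifies any of the $c_\alpha$; it does not, because their mutual distances are $\delta/2>0$.
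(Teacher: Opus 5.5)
Your argument is correct and is the natural affine-logic proof that the paper alludes to; the paper gives no details beyond ``proved as in affine logic''. Averaging the two points $a,b$ through the powermean over an ultracharge with $\kappa$ independent sets gives $\kappa$ points pairwise $\delta/2$ apart in an elementary extension, and Proposition~\ref{downward2} then cuts down to density character exactly $\kappa$. One small correction to your motivation: the new-constants route also works, because the same averaging (assign the finitely many constants involved independently and uniformly at random in $\{a,b\}$) shows that $T\cup\{\delta/2\leqslant\rho(c_\alpha,c_\beta):\ \alpha\neq\beta\}$ is affinely satisfiable, and that is exactly the hypothesis of Theorem~\ref{affine compactness}.
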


If $M$ is first order, then for every $a$, $$\mu(a)=\int(1-\rho(x,a))dx.$$
So, if $\mathcal F$ is an ultrafilter, then $a$ has the same measure in $M^{\mathcal F}$ as in $M$.
This is however not true if we replace $\mathcal F$ with an arbitrary ultracharge.
For example, let $M=\{0,1\}$ where $\mu(0)=\mu(1)=\frac{1}{2}$ and set $N=\frac{1}{2}M+\frac{1}{2}M$.
Then, $M$, $N$ satisfy the condition $\int\rho(x,y)dy=\frac{1}{2}$.
By computing this integral for every $a\in N$, one shows that $\mu(a)=\frac{1}{4}$ for every $a\in N$.
\vspace{3mm}

\noindent{\bf Henkin's construction}\\
Henkin's construction can be adapted to the integral logic context.
Besides the witness property, we need the mean value property which fits well with the argument in affine logic.

\begin{lemma}\label{mean0}
Assume both $T, 0\leqslant\sigma$ and $T,\sigma\leqslant0$ are affinely satisfiable.
Then, $T, \sigma=0$ is affinely satisfiable.
\end{lemma}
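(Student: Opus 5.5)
The plan is to show that $T,\sigma=0$ is in fact satisfiable, which trivially gives affine satisfiability. I read $\sigma=0$ as the pair of conditions $\sigma\leqslant0$ and $0\leqslant\sigma$.

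First apply Theorem \ref{affine compactness} (affine compactness) to each hypothesis separately. This gives a structure $M_1\vDash T$ with $\sigma^{M_1}\geqslant0$, and a structure $M_2\vDash T$ with $\sigma^{M_2}\leqslant0$. If either value is already $0$ we are done, so assume $\sigma^{M_1}>0>\sigma^{M_2}$.

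Next form a convex combination of the two models using the ultramean construction.
\begin{itemize}
\item Take the index set $I=\{1,2\}$ and the ultracharge $\wp$ with $\wp(\{1\})=\lambda$ and $\wp(\{2\})=1-\lambda$, for some $\lambda\in[0,1]$ to be chosen.
\item Put $N=\prod_\wp M_i$.
\item By the Ultramean theorem, every sentence $\eta$ satisfies
$$\eta^N=\lambda\eta^{M_1}+(1-\lambda)\eta^{M_2}.$$
\end{itemize}

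Every condition of $T$ has the form $\phi\leqslant\psi$ with $\phi,\psi$ sentences, so it holds in both $M_1$ and $M_2$. Taking the same convex combination of the two inequalities gives $\phi^N\leqslant\psi^N$, so $N\vDash T$ for every $\lambda$.

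Finally, $\sigma^N=\lambda\sigma^{M_1}+(1-\lambda)\sigma^{M_2}$ is affine in $\lambda$. It is positive at $\lambda=1$ and negative at $\lambda=0$, so choose
$$\lambda=\frac{-\sigma^{M_2}}{\sigma^{M_1}-\sigma^{M_2}}\in(0,1).$$
This gives $\sigma^N=0$. Hence $N$ satisfies $T,\sigma=0$, and in particular every nonnegative combination of its conditions is satisfiable.

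The only point needing care is that the ultramean over a two-point index set is legitimate and that the Ultramean theorem applies to sentences with integrals. Both follow directly from the construction preceding that theorem, since any finitely additive $\wp$ on $\wp(I)$ is allowed.

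If one wants to avoid invoking compactness, there is a purely affine alternative. A nonnegative combination of conditions of $T,\sigma=0$ has the form $\phi+t\sigma\leqslant\psi$ with $t\in\Rn$. For $t\geqslant0$ this is an instance of the $T,\sigma\leqslant0$ combinations, and for $t\leqslant0$ it is an instance of the $T,0\leqslant\sigma$ combinations. So the claim is immediate from the definition, and this shorter argument could replace the model-theoretic one.
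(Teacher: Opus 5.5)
Your main argument is correct, but it is not the paper's route.

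\textbf{Your route.} You argue semantically. Affine compactness gives models $M_1,M_2$ of $T$ with $\sigma^{M_1}\geqslant0\geqslant\sigma^{M_2}$. The two-point ultramean $N$ with weights $\lambda,1-\lambda$ is again a model of $T$, in which $\sigma$ takes the corresponding convex combination of the two values. For $0<\lambda<1$, $N$ is just $M_1\times M_2$ with the complete metric $\lambda\rho_1+(1-\lambda)\rho_2$, so no completion step is needed. A suitable $\lambda$ then gives $\sigma^N=0$.

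\textbf{The paper's route.} The paper's proof is the model-free case split of your last paragraph. With $T$ affinely closed, a nonnegative combination of conditions of $T,\sigma=0$ amounts to $r\sigma\leqslant s\sigma+\eta$, with $0\leqslant\eta$ in $T$ and $r,s\geqslant0$. If $r\leqslant s$, this is, up to rearranging terms, a nonnegative combination of conditions of $T,0\leqslant\sigma$. If $s\leqslant r$, it is one of $T,\sigma\leqslant0$. Either way it is satisfiable. The paper phrases this by contradiction and passes through a uniform $\epsilon$; as your version shows, that $\epsilon$ is not needed.

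\textbf{What each buys.} Your main route gives a stronger conclusion: $T,\sigma=0$ is actually satisfiable, with an explicit model. It is essentially the $T$-relative form of the paper's later remark that if $\sigma\leqslant0$ and $0\leqslant\sigma$ both have models, then so does $\sigma=0$.

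Its cost is reliance on the affine compactness theorem. This is not circular, since compactness was already proved by ultrameans. But this lemma is an ingredient of Henkin's construction, whose point is to give a second proof of compactness independent of ultrameans. For that purpose only the elementary case split is usable. The same holds for its syntactic twin, Lemma~\ref{mean-proof0}, where no semantic argument is available before completeness is proved.

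So your final paragraph is the argument that fits the lemma's role in the paper, and it could indeed replace the model-theoretic one.
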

\begin{proof}
We may further assume that $T$ is affinely closed. Assume the claim does not hold.
Then, for some $0\leqslant\eta$ in $T$ and $r,s\geqslant0$,
the condition $r\sigma\leqslant s\sigma+\eta$ is unsatisfiable.
So, for some $\epsilon>0$, every model satisfies $(s-r)\sigma+\eta\leqslant-\epsilon$.
Assume $r\leqslant s$. Then, every model satisfies $(s-r)\sigma+\eta\leqslant-\epsilon$.
This is a contradiction. The case $s\leqslant r$ leads to a contradiction similarly.
\end{proof}

\begin{lemma}\label{mean}
Assume both $T,0\leqslant\sup_x\phi(x)$ and\ \ $T,\inf_x\phi(x)\leqslant0$ are affinely satisfiable.
Let $c$ be a new constant symbol. Then, $T,\phi(c)=0$ is affinely satisfiable.
\end{lemma}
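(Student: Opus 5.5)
The plan is to reduce the statement to Lemma \ref{mean0}, applied with $\sigma=\phi(c)$ in the language $L\cup\{c\}$. Lemma \ref{mean0} says: if both $T,0\leqslant\phi(c)$ and $T,\phi(c)\leqslant0$ are affinely satisfiable, then $T,\phi(c)=0$ is affinely satisfiable. So it is enough to prove these two one-sided statements separately. They are symmetric (replace $\phi$ by $-\phi$, which exchanges $\sup_x$ and $\inf_x$), so I only describe the first.

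\textbf{Step 1.} Since $T,0\leqslant\sup_x\phi(x)$ is affinely satisfiable, Theorem \ref{affine compactness} gives a structure $M$ with $M\vDash T$ and $\sup_x\phi^M(x)\geqslant0$. If the supremum is attained at some $a$, interpret $c$ by $a$. Then $(M,a)$ satisfies $T$ together with $0\leqslant\phi(c)$, so this theory is satisfiable and hence trivially affinely satisfiable.

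\textbf{Step 2 (main obstacle).} In general the supremum need not be attained, and this is the only real difficulty. I would handle it by compactness. Consider the theory
$$\Sigma=T\cup\{-\tfrac1n\leqslant\phi(c):\ n\geqslant1\}.$$
Take any finitely many conditions of $\Sigma$ and a nonnegative combination of them. Only finitely many $n$ occur, say all $n\leqslant N$. Choose $a\in M$ with $\phi^M(a)\geqslant-\frac1N$. Then $(M,a)$ satisfies every chosen condition, so it satisfies each condition of $T$ and each $-\frac1n\leqslant\phi(c)$ with $n\leqslant N$. Therefore it also satisfies any nonnegative combination of them. Hence $\Sigma$ is affinely satisfiable, and by Theorem \ref{affine compactness} it has a model $(N,c^N)$. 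In this model $\phi^N(c^N)\geqslant-\frac1n$ for all $n$, so $\phi^N(c^N)\geqslant0$. Thus $T,0\leqslant\phi(c)$ is satisfiable, hence affinely satisfiable.

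\textbf{Step 3.} By symmetry, $T,\phi(c)\leqslant0$ is affinely satisfiable. Lemma \ref{mean0}, applied to $T$ regarded as a theory in $L\cup\{c\}$ and $\sigma=\phi(c)$, then yields that $T,\phi(c)=0$ is affinely satisfiable.

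The only point to check is that adding the new constant $c$ harms nothing. The conditions of $T$ do not mention $c$, so any $L$-model of $T$ expands to an $(L\cup\{c\})$-model of $T$ by interpreting $c$ arbitrarily. Also, Theorem \ref{affine compactness} and Lemma \ref{mean0} apply verbatim in the expanded language.
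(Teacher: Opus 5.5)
Your proof is correct, but it takes a different route from the paper's. The paper uses neither Theorem \ref{affine compactness} nor Lemma \ref{mean0} as such. It argues directly, in the same style as the proof of Lemma \ref{mean0}. Assuming $T$ is affinely closed, a failure of the conclusion gives some $0\leqslant\eta$ in $T$ and $r,s\geqslant0$ for which $r\phi(c)\leqslant s\phi(c)+\eta$ is unsatisfiable. Hence there is an $\epsilon>0$ with $(s-r)\phi(c)+\eta\leqslant-\epsilon$ in every model. Since $c$ is new and does not occur in $\eta$, this bound holds for every interpretation of $c$. So $\phi(c)$ can be replaced by $\sup_x\phi(x)$ when $r<s$, or by $\inf_x\phi(x)$ when $s<r$, which contradicts the corresponding hypothesis.

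Your argument instead proceeds in three steps:
\begin{itemize}
\item you first upgrade affine satisfiability to satisfiability;
\item you handle possible non-attainment of the supremum with the theory $T\cup\{-\frac1n\leqslant\phi(c):n\geqslant1\}$ and a second use of compactness;
\item you then invoke Lemma \ref{mean0}.
\end{itemize}
This is modular and fully rigorous within the paper's logical order, since Theorem \ref{affine compactness} was already proved by the ultramean construction.

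The price is that Lemma \ref{mean} now depends on the full compactness theorem. In the paper, this lemma is an ingredient of the Henkin construction, which is offered as a second, independent proof of affine compactness. With your proof, that second proof becomes circular. The paper's argument needs only the uniform $\epsilon$ for a single unsatisfiable condition, which one ultramean over an ultrafilter supplies. That weaker requirement is why the paper's argument suits the Henkin approach, even though yours is cleaner as a standalone proof.
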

\begin{proof}
We may further assume that $T$ is affinely closed. Suppose the claim does not hold.
Then, for some $0\leqslant\eta$ in $T$ and $r,s\geqslant0$,
the condition $r\phi(c)\leqslant s\phi(c)+\eta$ is unsatisfiable.
So, for some $\epsilon>0$, every model satisfies $(s-r)\phi(c)+\eta\leqslant-\epsilon$.
Clearly, $r\neq s$. Assume $r<s$. Then, every model satisfies $$(s-r)\sup_x\phi(x)+\eta\leqslant-\epsilon.$$
This is a contradiction. The case $s<r$ leads to a contradiction similarly.
\end{proof}
\bigskip

\begin{lemma}
Assume $T$ is affinely satisfiable and $\phi(x)$ is formula. Let $c$ be a new constant symbol.
Then, $T,\int\phi(x)dx=\phi(c)$ is affinely satisfiable.
\end{lemma}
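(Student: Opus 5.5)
The plan is to reduce the statement to Lemma \ref{mean}, applied to the formula $\psi(x)=\phi(x)-\int\phi(y)dy$. Here $\int\phi(y)dy$ is a sentence, so $\psi$ has only $x$ free. If we show that both $T,0\leqslant\sup_x\psi(x)$ and $T,\inf_x\psi(x)\leqslant0$ are affinely satisfiable, Lemma \ref{mean} gives that $T,\psi(c)=0$ is affinely satisfiable. The condition $\psi(c)=0$ is exactly $\int\phi(x)dx=\phi(c)$, so this is the claim.

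The key semantic point is that in every structure $M$ the integral lies between the infimum and the supremum. Since $\mu$ is a probability charge and $\phi^M$ is bounded and continuous, monotonicity and normalization of the integral give
$$\inf_{a\in M}\phi^M(a)\leqslant\int\phi^M(x)d\mu\leqslant\sup_{a\in M}\phi^M(a).$$
Hence every structure satisfies both $0\leqslant\sup_x\psi(x)$ and $\inf_x\psi(x)\leqslant0$, because $\sup_x\psi=\sup_x\phi-\int\phi$ and $\inf_x\psi=\inf_x\phi-\int\phi$ are valid identities.

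Affine satisfiability of $T,0\leqslant\sup_x\psi$ then follows directly. A nonnegative combination of conditions from $T$ together with $r\cdot(0\leqslant\sup_x\psi)$ has the form $\sum r_i\phi_i\leqslant\sum r_i\psi_i+r\sup_x\psi$. Since $T$ is affinely satisfiable, some structure $M$ satisfies $\sum r_i\phi_i\leqslant\sum r_i\psi_i$. In that same $M$ we have $r\sup_x\psi^M\geqslant0$, so $M$ satisfies the combined condition. The same argument works for $\inf_x\psi\leqslant0$.

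I do not expect a serious obstacle; the main care is bookkeeping. First, the lemma's hypotheses must be stated for the formula $\psi$ in the form Lemma \ref{mean} requires. Second, conditions involving both sides must be normalized: a condition $\phi\leqslant\psi$ is the same as $\phi-\psi\leqslant0$, and the equality $\int\phi\,dx=\phi(c)$ means the two conditions $\leqslant$ and $\geqslant$. If one prefers not to pass through $\psi$, one can instead repeat the proof of Lemma \ref{mean} directly. Suppose $(s-r)(\phi(c)-\int\phi)+\eta\leqslant-\epsilon$ held in all models of the relevant conditions. Replacing $c$ by a point where $\phi$ nearly attains its sup (if $s>r$) or its inf (if $s<r$), and using the inequality above, would yield a model of $\eta\leqslant-\epsilon$ for some $0\leqslant\eta$ in the affine closure of $T$. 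That contradicts affine satisfiability of $T$.
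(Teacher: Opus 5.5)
Your proposal is correct and matches the paper's proof: it too sets $\psi(x)=\phi(x)-\int\phi(x)dx$, notes that every structure satisfies $\inf_x\psi(x)\leqslant0\leqslant\sup_x\psi(x)$ (so both conditions are affinely satisfiable together with $T$), and then applies Lemma \ref{mean}. Your explicit check that a condition valid in all structures stays affinely satisfiable when added to $T$ spells out a step the paper leaves implicit.
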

\begin{proof}
Set $$\psi(x)=\phi(x)-\int\phi(x)dx.$$
Then, every model satisfies both $\inf_x\psi(x)\leqslant0$ and $0\leqslant\sup_x\psi(x)$.
Hence, they are both affinley satisfiable with $T$. Now, use Lemma \ref{mean}.
\end{proof}

\begin{lemma}
Assume $T$ is affinely satisfiable and $\phi(x)$ is formula. Let $c$ be a new constant symbol.
Then, $T\cup\{\phi(c)\leqslant\inf_x\phi(x)\}$ is affinely satisfiable.
\end{lemma}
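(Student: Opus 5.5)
We argue by contradiction, as in Lemma \ref{mean}. We may assume $T$ is affinely closed, meaning closed under nonnegative combinations of its conditions, each written in the form $0\leqslant\eta$. Suppose $T\cup\{\phi(c)\leqslant\inf_x\phi(x)\}$ is not affinely satisfiable. Then some finite nonnegative combination of its conditions is unsatisfiable. Since $T$ is affinely closed, we get a single condition $0\leqslant\eta$ in $T$ and some $r\geqslant0$ such that
$$r\phi(c)\leqslant r\inf_x\phi(x)+\eta$$
is unsatisfiable in the language $L\cup\{c\}$. Here $\eta$ is an $L$-sentence not mentioning $c$.

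Next we pass to a uniform statement. Unsatisfiability says that every $L\cup\{c\}$-structure $N$ satisfies
$$r\phi^N(c^N)>r\inf_x\phi^N(x)+\eta^N.$$
Every $L$-structure $M$ and every choice $a\in M$ gives such an $N$ by interpreting $c$ as $a$. So every $L$-model $M$ and every $a\in M$ satisfy $r\phi^M(a)-r\inf_x\phi^M(x)>\eta^M$. Taking the infimum over $a\in M$, the left side has infimum $0$. Hence $\eta^M\leqslant0$ for every $M$.

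Now split into two cases.

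\emph{Case $r=0$.} The condition reduces to $0\leqslant\eta$ itself, so $0\leqslant\eta$ alone is unsatisfiable. This contradicts the affine satisfiability of $T$.

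\emph{Case $r>0$.} We need strictness, so we argue directly. Since $0\leqslant\eta$ is in $T$, some model $M$ satisfies $\eta^M\geqslant0$. Fix $\epsilon>0$ and choose $a\in M$ with $\phi^M(a)\leqslant\inf_x\phi^M(x)+\epsilon/r$. Then
$$r\phi^M(a)\leqslant r\inf_x\phi^M(x)+\epsilon\leqslant r\inf_x\phi^M(x)+\eta^M+\epsilon.$$
This gives only approximate satisfaction. The main obstacle is exactly this $\epsilon$-gap, since the infimum need not be attained.

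To close the gap, we take the contrapositive uniformly, as in the proof of Lemma \ref{mean}: unsatisfiability yields a fixed $\epsilon>0$ such that every model satisfies
$$r\phi(c)-r\inf_x\phi(x)+\epsilon\leqslant-\eta.$$
This uniform margin is obtained as follows. If no such $\epsilon$ existed, there would be models $N_k$ with
$$r\phi(c)-r\inf_x\phi(x)-\eta\leqslant1/k.$$
An ultramean (or ultraproduct) of the $N_k$ along a nonprincipal ultrafilter would then satisfy the condition exactly, by the ultramean theorem.

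With the uniform margin in hand, take the model $M$ of $0\leqslant\eta$ and choose $a$ as above with $\epsilon/2r$ in place of $\epsilon/r$. Then $\epsilon/2+\epsilon\leqslant-\eta^M\leqslant0$, which is a contradiction. Hence $T\cup\{\phi(c)\leqslant\inf_x\phi(x)\}$ is affinely satisfiable.
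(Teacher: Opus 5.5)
Your argument is correct and is essentially the paper's proof. Both reduce to a single failing combination, extract a uniform margin $\epsilon$, and let the witness constant approach $\inf_x\phi(x)$ to force $\eta<0$, contradicting $0\leqslant\eta\in T$. Unlike the paper, which simply asserts the margin, you justify it via an ultraproduct and the ultramean theorem.

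Two sign slips in the key step need fixing. The margin should read $r\inf_x\phi(x)-r\phi(c)+\epsilon\leqslant-\eta$, which is what your $N_k$ argument actually proves. With your choice of $a$, the final line then gives $\epsilon/2\leqslant-\eta^M\leqslant0$, not $\epsilon/2+\epsilon\leqslant-\eta^M$.
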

\begin{proof}
Suppose not. We may assume $T$ is affinely closed. Then, there are $\sigma\leqslant\eta$ in $T$ and $r,s>0$ such that
$$r\phi(c)+s\sigma\leqslant r\inf_x\phi(x)+s\eta$$ is unsatisfiable. So, for some $\epsilon>0$
$$\vDash r\inf_x\phi(x)+s\eta\leqslant r\phi(c)+s\sigma-\epsilon.$$
So, $$\vDash r\inf_x\phi(x)+s\eta\leqslant r\inf_x\phi(x)+s\sigma-\epsilon$$
which implies that $\vDash \eta\leqslant\sigma-\frac{\epsilon}{s}$. This is a contradiction.
\end{proof}

\begin{definition}\label{witness-mean}
\em{An $L$-theory $T$ has \emph{the witness property} if for each $\phi(x)$ there exists $c$ such that
$\phi(c)\leqslant\inf_x\phi(x)$ belongs to $T$. It has the \emph{mean value property} if for each $\phi(x)$
there exists $c$ such that $\int\phi(x)dx=\phi(c)$ belongs to $T$.}
\end{definition}

The following lemma has a routine proof using infinitary chains of affinely satisfiable theories.

\begin{lemma}
Assume $T$ is an affinely satisfiable theory in $L$. Then there is a language $\bar{L}\supseteq L$
and an affinely satisfiable $\bar{L}$-theory $\bar{T}\supseteq T$ which has the witness as well as
the mean value properties and is maximal with these properties.
\end{lemma}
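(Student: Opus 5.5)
The plan is the standard Henkin iteration: add witnesses and mean-value constants in $\omega$ stages, then take a maximal extension by Zorn's lemma. Two facts about affine satisfiability do all the work. First, it has finite character: by definition, a theory is affinely satisfiable iff every finite subset is. Second, it is preserved under the one-step additions supplied by the two preceding lemmas. For the first of these, take $T$ affinely satisfiable, $\phi(x)$ a formula and $c$ new; then $T\cup\{\phi(c)\leqslant\inf_x\phi(x)\}$ is affinely satisfiable. For the second, $T\cup\{\int\phi(x)dx=\phi(c)\}$ is affinely satisfiable, where the equality is read as the pair of conditions $\leqslant$ and $\geqslant$.

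\textbf{Step 1: adding constants for all formulas at once.} Put $L_0=L$ and $T_0=T$. Given $L_k$ and $T_k$, let $L_{k+1}$ be $L_k$ together with two fresh constants $c_\phi$ and $d_\phi$ for every $L_k$-formula $\phi(x)$ in one free variable. Let $T_{k+1}$ be $T_k$ together with the conditions $\phi(c_\phi)\leqslant\inf_x\phi(x)$ and $\int\phi(x)dx=\phi(d_\phi)$ for all such $\phi$. To see that $T_{k+1}$ is affinely satisfiable, take any finite subset. It involves only finitely many new constants, which we add one at a time. At each addition, apply the appropriate lemma to the theory built so far, which is affinely satisfiable by induction. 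This is legitimate because every constant being added is fresh with respect to that theory.

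\textbf{Step 2: union and maximality.} Let $\bar L=\bigcup_k L_k$ and $T_\omega=\bigcup_k T_k$. Then $T_\omega$ is affinely satisfiable, since any finite subset lies in some $T_k$. It also has both properties: every $\bar L$-formula $\phi(x)$ is already an $L_k$-formula for some $k$, so its witness and mean constant appear in $T_{k+1}$. Next consider the family of affinely satisfiable $\bar L$-theories containing $T_\omega$, ordered by inclusion. By finite character, the union of any chain in this family is again in the family, so Zorn's lemma gives a maximal element $\bar T$. Every member of the family contains $T_\omega$ and the language stays $\bar L$, so each member inherits the witness and mean value properties. Hence $\bar T$ is maximal among affinely satisfiable $\bar L$-theories with these properties.

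\textbf{Main obstacle.} The delicate point is the reading of ``maximal.'' If maximality were meant with the language also allowed to grow, it would fail at once, since one could always add more constants. I will therefore fix $\bar L$ and take maximality among $\bar L$-theories; this is the form needed later in the Henkin model construction. The only remaining check is that conditions of the form $\sigma=\eta$ are treated as two conditions $\sigma\leqslant\eta$ and $\eta\leqslant\sigma$. With that convention, the preceding lemmas apply verbatim to the affine combinations that appear in the definition of affine satisfiability.
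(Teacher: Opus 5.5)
Your proposal is correct and is essentially the routine argument the paper has in mind; the paper gives no details beyond saying the lemma follows from chains of affinely satisfiable theories. Your version supplies them: $\omega$ Henkin stages justified by the two one-step lemmas and the finite character of affine satisfiability, then Zorn's lemma in the fixed language $\bar L$, with maximality read among affinely satisfiable $\bar L$-theories, which is what the subsequent model construction uses.
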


\begin{proposition}\label{model construction}
Let $T$ be a maximal affinely satisfiable theory in $L$ with the witness and mean value properties. Then $T$ has a model.
\end{proposition}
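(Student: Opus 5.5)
We build the canonical term model, as in the affine logic case, and add a charge obtained from the mean value property through Corollary \ref{KR}.

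\emph{Step 1: values of sentences.} For each sentence $\sigma$ of $L$ put
$$\sigma^\circ=\inf\{r\in\Rn:\ \sigma\leqslant r\in T\}.$$
By maximality and affine satisfiability, $\sigma^\circ$ is finite (it is bounded by ${\sf b}_\sigma$), and $r\leqslant\sigma$ belongs to $T$ exactly when $r\leqslant\sigma^\circ$. Moreover, $\sigma\mapsto\sigma^\circ$ is linear and positive: $(r\sigma+s\eta)^\circ=r\sigma^\circ+s\eta^\circ$, and $\sigma\leqslant\eta\in T$ implies $\sigma^\circ\leqslant\eta^\circ$. Both facts come from the Hahn--Banach style argument already used in the proof of Theorem \ref{affine compactness}, or directly from Lemma \ref{mean0} applied to $\sigma-r$ together with maximality.

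\emph{Step 2: the prestructure.} Let $M_0$ be the set of closed $L$-terms. Put $\rho(t,s)=\rho(t,s)^\circ$, $R^{M_0}(\bar t)=R(\bar t)^\circ$, and interpret function symbols syntactically. The Lipschitz conditions and the pseudometric axioms hold because they hold in every structure, so the corresponding affine conditions are in $T$ by maximality.

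\emph{Step 3: the charge.} Let $G$ be the space of functions $t\mapsto\phi(t)^\circ$ on $M_0$, where $\phi(x)$ ranges over formulas with parameters among the closed terms. These functions are Lipschitz for the pseudometric of Step 2, $G$ is a vector space containing the constants, and hence $G$ is majorizing. Define $\Lambda(\phi(\cdot)^\circ)=(\int\phi(x)dx)^\circ$. This functional is well defined and positive. Indeed, suppose $\phi(t)^\circ\geqslant0$ for all $t$. By the witness property there is $c$ with $\phi(c)\leqslant\inf_x\phi(x)$ in $T$, so $(\inf_x\phi)^\circ\geqslant\phi(c)^\circ\geqslant0$. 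The valid condition $\inf_x\phi\leqslant\int\phi\,dx$ then gives $\Lambda\geqslant0$. Applying this to $\phi-\psi$ and $\psi-\phi$ gives well-definedness. Corollary \ref{KR} now yields a regular Borel charge $\mu$ on $M_0$ with $\int f\,d\mu=\Lambda(f)$ for every $f\in G$.

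\emph{Step 4: truth lemma.} We show by induction that $\phi^{M_0}(\bar t)=\phi(\bar t)^\circ$ for every formula $\phi$. Atomic formulas and connectives follow from Step 1. For $\inf_x\phi$, the inequality $(\inf_x\phi)^\circ\leqslant\phi(t)^\circ$ is clear, and the witness constant gives equality. The case of $\sup$ is symmetric, using $-\phi$. For $\int\phi\,dx$, by the induction hypothesis $\phi^{M_0}(\bar t,\cdot)\in G$, so $\int\phi^{M_0}d\mu=\Lambda=(\int\phi\,dx)^\circ$.

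Finally, Proposition \ref{exist1} passes from the prestructure $M_0$ to a structure preserving all values. For every condition $\sigma\leqslant\eta\in T$ we get $\sigma^\circ\leqslant\eta^\circ$, so this structure is a model of $T$.

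The main obstacle is the positivity and well-definedness of $\Lambda$ in Step 3, which is where the witness property is used. The mean value property is what makes the value $(\int\phi\,dx)^\circ$ coincide with $\phi(c)^\circ$ for a constant $c$, and so keeps the construction consistent with Step 1. Some care is needed to make sure the function space $G$ is closed under the operations used in the argument.
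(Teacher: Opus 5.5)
Your proof is correct, and its skeleton is the paper's: the canonical model, the value map $\sigma\mapsto\sigma^\circ$, a charge from Corollary \ref{KR} on the space of functions $t\mapsto\phi(t)^\circ$, a truth lemma, and completion by Proposition \ref{exist1}.

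The genuine difference is how you get positivity of $\Lambda$. The paper uses the mean value property: if $\phi(c)^T\geqslant0$ for every $c$, choose $c$ with $\int\phi(x)dx=\phi(c)$ in $T$, so $(\int\phi\,dx)^T=\phi(c)^T\geqslant0$. You use the witness property to get $(\inf_x\phi)^\circ\geqslant0$, and then the valid condition $\inf_x\phi\leqslant\int\phi\,dx$, which is in $T$ by maximality.

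As a result, your argument never uses the mean value property. Your closing remark about it does not correspond to any step you actually carry out. In effect you have shown that a maximal affinely satisfiable theory with the witness property alone has a model, so the mean value lemmas in the Henkin construction could be dropped. The same applies to Proposition \ref{completeness}, since $\inf_x\phi\leqslant\int\phi\,dx$ is derivable from (A10), (A14), (R5) and (A18) together with the linearity axioms. The paper's route, in return, matches its stated hypotheses and obtains positivity by a single point evaluation.

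A minor second difference concerns the universe. You take closed terms, so function symbols and atomic formulas need no work. The paper uses constant symbols and sets $F^M(\hat a_1,\dots,\hat a_n)=\hat b$ when $\rho(F(\bar a),b)=0\in T$, which tacitly needs a witness for $\rho(F(\bar a),x)$.

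Finally, Corollary \ref{KR} is stated for metric spaces. Apply it on the metric quotient of $M_0$ and pull the charge back, as the paper does by quotienting first.
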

\begin{proof}
By maximality, for every sentence $\phi$ there is a unique $r$ such that $\phi=r\in T$.
We denote this $r$ by $\phi^T$.
Let $C$ be the set of constant symbols of $L$ and define a pseudometric on $M$
(which we denote it by $\rho$) by setting $\rho(c,e)=(\rho(c,e))^T$.
Let $M$ be the resulting quotient metric space where its metric is denoted by $\rho$ again.
The equivalence class of $c$ is denoted by $\hat c$. Define a metric structure on $M$ by setting:

- $c^M=\hat c$

- $R^M(\hat a_1,...,\hat a_n)=(R(a_1,...,a_n))^T$

- $F^M(\hat a_1,...,\hat a_n)=\hat b$ \ \ if \ \ $\rho(F(a_1,...,a_n),b)=0\in T$.
\vspace{1mm}

Note that $R^M$ and $F^M$ are well-defined. For every $L$-formula $\phi(x)$,
consider the function $\phi^T:M\rightarrow\Rn$ defined by $\hat c\mapsto(\phi(c))^T$.
The set of such functions is a majorizing subset of $\mathbf{C}_b(M)$.
It is clear that the functional $$\Lambda:\phi^T(x)\mapsto\Big(\int\phi(x)dx\Big)^T$$ is linear.
On the other hand, if $0\leqslant\phi^T(x)$ as a function, then $0\leqslant\phi(c)\in T$ for every $c$.
Hence, by the mean value property, for some $c$ the conditions $$0\leqslant\phi(c)=\int\phi(x)dx$$ belong to $T$.
This shows that $\Lambda$ is positive.
Now, using Corollary \ref{KR}, we obtain a charged $L$-structure $(M,\mu)$ such that for every $\phi(x)$
$$\int\phi^T(x)d\mu=\Big(\int\phi(x)dx\Big)^T.$$
Finally, we must prove that for every $L$-formula $\phi(\x)$ and $a_1,...,a_n\in C$,
$$\phi^M(\hat a_1,...,\hat a_n)=(\phi(a_1,...,a_n))^T.$$
We consider the integral case. Assume the claim is proved for $\phi(x)$ so that
for every $a\in C$, $\phi^M(\hat a)=(\phi(a))^T$.
Then, by the construction of $\mu$ and the induction hypothesis,
$$\Big(\int\phi(x)dx\Big)^T=\int\phi^T(x)d\mu=\int\phi^M(x)d\mu.$$
To complete the proof, use Proposition \ref{exist1} to find a complete model of $T$. 
\end{proof}

\begin{corollary}
Every affinely satisfiable theory is satisfiable.
\end{corollary}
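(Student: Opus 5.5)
The plan is to derive this corollary from the Henkin-style results just established, as a second proof of Theorem \ref{affine compactness} that avoids ultrameans. Let $T$ be an affinely satisfiable $L$-theory. By the lemma preceding Definition \ref{witness-mean}, there is a language $\bar L\supseteq L$ and an affinely satisfiable $\bar L$-theory $\bar T\supseteq T$. This $\bar T$ has the witness property and the mean value property, and it is maximal among affinely satisfiable theories with these properties.

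The one point to verify before invoking Proposition \ref{model construction} is that $\bar T$ is maximal affinely satisfiable outright. That is, for every $\bar L$-sentence $\sigma$ there should be a unique $r$ with $\sigma=r\in\bar T$, which is what Proposition \ref{model construction} uses. I would obtain this inside the chain construction of that lemma. At each stage, for a sentence $\sigma$, let $r_0=\sup\{r:\ T',\ r\leqslant\sigma$ is affinely satisfiable$\}$, where $T'$ is the current theory. Then both $T',r_0\leqslant\sigma$ and $T',\sigma\leqslant r_0$ are affinely satisfiable, the sup being attained by a routine finite-combination/limit argument. Lemma \ref{mean0}, applied to $\sigma-r_0$, then lets us add $\sigma=r_0$.

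Adding new constants is handled the same way. The witness and mean value lemmas allow conditions $\phi(c)\leqslant\inf_x\phi(x)$ and $\int\phi(x)dx=\phi(c)$ to be adjoined one at a time with fresh constants. Unions of chains of affinely satisfiable theories remain affinely satisfiable, since the definition only involves finitely many conditions. Iterating $\omega$ times, so that formulas in newly added constants are also handled, produces $\bar T$ with all three properties.

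Once this is done, Proposition \ref{model construction} gives an $\bar L$-structure $M\vDash\bar T$; by its last step, via Proposition \ref{exist1}, $M$ can be taken complete. Its reduct to $L$ is then a model of $T\subseteq\bar T$, because the reduct preserves the interpretations of $L$-symbols and the charge $\mu$, and therefore preserves the values of all $L$-formulas, including integrals.

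I expect the main obstacle to be the bookkeeping for maximality. One must check that the attainment of $r_0$ really follows from affine satisfiability, which is a compactness-free argument using only finite combinations. One must also check that adding the equality $\sigma=r_0$ does not destroy the ability to add later witness and mean-value conditions. The latter is guaranteed because those lemmas apply to an arbitrary affinely satisfiable $T$.
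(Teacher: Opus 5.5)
Your proof follows the paper's route: extend $T$ by the chain lemma to an $\bar L$-theory $\bar T$ with the witness and mean value properties that is maximal with these properties, apply Proposition \ref{model construction} (completed via Proposition \ref{exist1}), and take the $L$-reduct. Such a $\bar T$ is already maximal affinely satisfiable outright, since enlarging $\bar T$ within $\bar L$ cannot destroy either property, so you do not need to re-engineer the chain for that.

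The one inaccurate point is your claim that attainment of $r_0$ is a compactness-free argument using only finite combinations. Take $\theta\leqslant 0$ a nonnegative combination of conditions of $T'$ and $s>0$. Knowing that $\theta+s(r_0-\epsilon-\sigma)\leqslant0$ is satisfiable for every $\epsilon>0$ does not give satisfiability of $\theta+s(r_0-\sigma)\leqslant0$ by combining conditions. That step needs a limit model, for example an ultramean over a nonprincipal ultrafilter on $\Nn$. It is the same ``unsatisfiable implies unsatisfiable by a uniform $\epsilon$'' step that the paper tacitly uses in Lemmas \ref{mean0} and \ref{mean}. Alternatively, you can avoid attainment entirely: the proof of Proposition \ref{model construction} goes through with $\sigma^{\bar T}:=\sup\{r:\ r\leqslant\sigma\in\bar T\}$, which is linear and monotone for any maximal affinely satisfiable $\bar T$.
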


An easy consequence of the affine compactness is that if both $\sigma\leqslant0$ and
$0\leqslant\sigma$ have model, then $\sigma=0$ has a model.
One can also verify that if $T\vDash0\leqslant\phi$ then for each $\epsilon>0$
there exists a finite $\Delta\subseteq T$ such that $\Delta\vDash-\epsilon\leqslant\phi $.
A more important consequence is the following.

\begin{corollary}
If $M\equiv N$, then  there exists $K$ such that $M\preccurlyeq K$ and $N\preccurlyeq K$.
\end{corollary}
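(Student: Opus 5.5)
The plan is the usual elementary diagram argument, combined with affine compactness (Theorem \ref{affine compactness}). Assume $M\equiv N$. We may take the underlying sets of $M$ and $N$ to be disjoint. For each $a\in M$ we add a new constant $c_a$, and for each $b\in N$ a new constant $d_b$. Let $\mathrm{ediag}(M)$ be the set of conditions $\phi(\bar c_{\a})=\phi^M(\a)$, where $\phi(\x)$ ranges over $L$-formulas and $\a$ over tuples of $M$, and define $\mathrm{ediag}(N)$ in the same way. Set
$$\Sigma=\mathrm{ediag}(M)\cup\mathrm{ediag}(N).$$
Suppose $K\vDash\Sigma$. Then the maps $a\mapsto c_a^K$ and $b\mapsto d_b^K$ preserve the values of all formulas, so they are elementary embeddings. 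After identifying $M$ and $N$ with their images, we have $M\preccurlyeq K$ and $N\preccurlyeq K$. Because the charge $\mu$ is a logical symbol, integral formulas are automatically covered by this preservation.

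By Theorem \ref{affine compactness}, it suffices to show that $\Sigma$ is affinely satisfiable. Take finitely many conditions from $\Sigma$ and nonnegative coefficients. Each condition is an equality, which we read as two inequalities. After collecting terms, the nonnegative combination becomes a single condition of the form
$$\phi(\bar c_{\a})+\psi(\bar d_{\b})\leqslant r,$$
where $r=\phi^M(\a)+\psi^N(\b)$. Here $\phi$ and $\psi$ are linear combinations of formulas, so they are again formulas. Since affine connectives are closed under addition and real scalars, we may combine the finitely many $M$-formulas into one $\phi$, and likewise for $N$.

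To satisfy this condition, interpret everything in $N$, expanded to $\bar L$. The constants $d_{\b}$ are interpreted by $\b$ itself. The main step is choosing the interpretation of $\bar c_{\a}$, so that $\phi^N(\bar c)\leqslant\phi^M(\a)$. Consider the sentence $\inf_{\x}\phi(\x)$. Since $M\equiv N$, we have
$$\inf_{\x}\phi^N=\inf_{\x}\phi^M\leqslant\phi^M(\a).$$
Only an approximate infimum is available, so we take $\bar e\in N$ with $\phi^N(\bar e)\leqslant\phi^M(\a)+\epsilon$. This satisfies the condition only up to $\epsilon$.

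This $\epsilon$ is the main obstacle, and there are two ways to handle it. The first is to note that affine compactness only needs each condition to be approximately satisfiable. That is the content of the remark after the corollary: if every $\Delta\subseteq T$ finite is satisfied up to every $\epsilon$, then $T$ is satisfiable. Equivalently, one replaces $\Sigma$ by the conditions weakened by $\epsilon$ for all $\epsilon>0$, and the intersection still gives exact equalities. The second is to pass first to the powermean $N^\wp$ over a suitable ultracharge, where the infimum of finitely many formulas is attained, using the diagonal embedding corollary to keep $N\preccurlyeq N^\wp$. I would take the first route: I would check, via the Hahn--Banach proof of Theorem \ref{affine compactness}, that approximate satisfiability of every nonnegative combination suffices. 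Concretely, $q$ is unaffected by an $\epsilon$ slack, since $q(\sigma)$ is already defined as an infimum.

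Finally, we handle the direction symmetric to this one. The conditions coming from $\mathrm{ediag}(N)$ are satisfied exactly in $N$, so nothing more is needed. Combining the pieces, $\Sigma$ is affinely satisfiable. Any model $K$ of $\Sigma$, completed if necessary by Proposition \ref{exist1}, is then the required common elementary extension.
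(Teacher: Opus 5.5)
Your argument is correct and is the intended one. The paper states this corollary without proof as a consequence of Theorem \ref{affine compactness}, and your check is exactly what is needed: apply the theorem to $\mathrm{ediag}(M)\cup\mathrm{ediag}(N)$, reduce a nonnegative combination to $\phi(\bar c_{\a})+\psi(\bar d_{\b})\leqslant\phi^M(\a)+\psi^N(\b)$, and witness $\inf_{\x}\phi$ approximately in $N$ using $M\equiv N$.

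Close the $\epsilon$ gap with the weakening you mention, using the theorem as a black box on $\{\phi\leqslant\psi+\epsilon:\ (\phi\leqslant\psi)\in\Sigma,\ \epsilon>0\}$: every nontrivial nonnegative combination of these has positive slack, so it is exactly satisfiable, and any model of this theory is a model of $\Sigma$. This is cleaner than re-inspecting the Hahn--Banach proof, and the remark you cite actually precedes the corollary and concerns $T\vDash0\leqslant\phi$, not approximate satisfiability.
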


The elementary amalgamation property is proved in a similar way.
In fact, many basic results in affine logic hold in the framework of AL$_{\int}$ too.
The proofs are often similar and rely on the affine compactness theorem and the properties of quantifiers $\sup$-$\inf$.

\section{Completeness}
By $\phi=\psi$ is meant $\{\phi\leqslant\psi, \psi\leqslant\phi\}$.
We may however use it as a primitive expression. By $\phi[t/x]$ is meant the result
of substituting the term $t$ in place of the free occurrences of $x$ in $\phi$.
The notion of substitutability of $t$ for $x$ is defined as in first order logic.
\vspace{1mm}

{\bf Linearity axioms}:

(A1) $r\leqslant s$ \hspace{13mm} if $\Rn\vDash r\leqslant s$

(A2) $\phi+(\psi+\theta)=(\phi+\psi)+\theta$

(A3) $\phi+\psi=\psi+\phi$

(A4) $0+\phi=\phi$

(A5) $r(\phi+\psi)=r\phi+r\psi$

(A6) $(r+s)\phi=r\phi+s\phi$

(A7) $r(s\phi)=(rs)\phi$

(A8) $1\phi=\phi$

(A9) $0\phi=0$
\vspace{1mm}

{\bf Quantification axioms}:

(A10) $\phi[t/x]\leqslant(\sup_x\phi)$ \hspace{20mm} (if $t$ is substitutable for $x$)

(A11) $\sup_x(\phi+\psi)=\sup_x\phi(x)+\psi$ \hspace{6mm} ($x$ is not free in $\psi$)

(A12) $\sup_x(\phi+\psi)\leqslant\sup_x\phi+\sup_x\psi$

(A13) $\sup_x(r\phi)=r\sup_x\phi$\ \ \ \ where $r\geqslant 0$

(A14) $\sup_x\phi=-\inf_x-\phi$

(A15) $\int 1 dx=1$

(A16) $\int(\phi+\psi)dx=\int\phi dx+\int\psi dx$

(A17) $\int r\phi dx=r\int\phi dx$

(A18) $\int\phi dx=\phi$ \ \ \ \ if $x$ is not free in $\phi$
\vspace{1mm}

{\bf Pseudometric axioms}:

(A19)  $\rho(x,x)=0$

(A20) $\rho(x,y)=\rho(y,x)$

(A21) $\rho(x,z)\leqslant \rho(x,y)+\rho(y,z)$
\vspace{1mm}

{\bf Bound and Lipschitz axioms}:

(A22) $\rho(F(\bar{x}),F(\bar{y}))\leqslant\lambda_F\ \rho(\bar{x},\bar{y})$ \hspace{10mm} ($F\in L$)

(A23) $R(\bar{x})-R(\bar{y})\leqslant\lambda_R\ \rho(\bar{x},\bar{y})$ \hspace{12mm} ($R\in L$)

(A24) $0\leqslant R(\x)\leqslant1$ \hspace{26mm} $(R\in L$ or $R=\rho$)
\vspace{1mm}

{\bf Deduction rules}: \vspace{1mm}

(R1) $\frac{\phi\leqslant\psi,\ \psi\leqslant\theta}{\phi\leqslant\theta}$ \vspace{2mm}

(R2) $\frac{\phi\leqslant\psi}{\phi+\theta\leqslant\psi+\theta}$ \vspace{2mm}

(R3) $\frac{0\leqslant r,\ \phi\leqslant\psi}{r\phi\leqslant r\psi}$ \vspace{2mm}

(R4) $\frac{\phi\leqslant\psi}{\sup_x\phi\leqslant\sup_x\psi}$ \vspace{2mm}

(R5) $\frac{\phi\leqslant\psi}{\int\phi dx\leqslant\int\phi dx}$ \vspace{2mm}
\bigskip

Below, $\Sc,\Sc_1,...$ denote conditions.
We often write $\Gamma,\Sc$ for $\Gamma\cup\{\Sc\}$. We also write $\Gamma\vdash\Sc_1,\Sc_2$
if $\Gamma\vdash\Sc_1$ and $\Gamma\vdash\Sc_2$.
We define an increasing chain of length $\omega$ of relations $\vdash_n$.

\begin{definition} \label{proofdfn}
{\em We write $\Gamma\vdash_0\Sc$ if $\Sc$ belongs to $\Gamma$ or is a logical axiom.
Suppose $\vdash_k$ has been defined for $k<n$.
Then, we write $\Gamma\vdash_n\Sc$ if any one of the following requirements is satisfied:
\begin{quote}

- there is $k<n$ such that $\Gamma\vdash_k\Sc$

- there are $k<n$ and instance $\frac{\Sc_1,\ \Sc_2}{\Sc}$ of the logical rules (R1-R3)
such that $\Gamma\vdash_k\Sc_1,\Sc_2$

- $\Sc$ is the condition $\sup_x\phi\leqslant\sup_x\psi$,\ \ $x$ is not free
in $\Gamma$ and $\Gamma\vdash_k\phi\leqslant\psi$ for some $k<n$

- $\Sc$ is the condition $\int\phi dx\leqslant\int\psi dx$,\ \ $x$ is not free
in $\Gamma$ and $\Gamma\vdash_k\phi\leqslant\psi$ for some $k<n$

\end{quote}}
\end{definition}

\begin{definition}
{\em  $\Sc$ is \emph{provable}\index{provable} 
from $\Gamma$, denoted by $\Gamma\vdash\Sc$, if $\Gamma\vdash_n\Sc$ for some $n<\omega$.
A set of conditions $\Gamma$ is \emph{inconsistent} if $\Gamma\vdash 1\leqslant 0$.
Otherwise, it is \emph{consistent}\index{consistent}.}
\end{definition}

One proves by induction on the length of proof that:

\begin{theorem} \label{soundness} \emph{(Soundness)}
If $\Gamma\vdash\Sc$, then $\Gamma\vDash\Sc$. If $\Gamma$ is satisfiable, then $\Gamma$ is consistent.
\end{theorem}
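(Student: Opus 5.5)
We argue by induction on $n$, following Definition \ref{proofdfn}: we show that if $\Gamma\vdash_n\Sc$ then every model of $\Gamma$ satisfies $\Sc$ under every assignment of the free variables. Conditions with free variables are read universally, i.e. $M\vDash\phi\leqslant\psi$ means $\phi^M(\a)\leqslant\psi^M(\a)$ for all tuples $\a$ in $M$. The second assertion then follows at once. If $\Gamma$ were satisfiable but inconsistent, we would have $\Gamma\vdash 1\leqslant 0$, so a model $M$ of $\Gamma$ would satisfy $1\leqslant 0$, which is absurd.

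For the base case $n=0$ we check that each logical axiom is valid in every $L$-structure $(M,\rho,\mu)$. The axioms fall into three groups.

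\begin{itemize}
\item The linearity axioms (A1)--(A9) hold because formulas are evaluated as real-valued functions with the usual vector space operations.
\item The $\sup/\inf$ axioms (A10)--(A14) are elementary properties of suprema of bounded real functions. For (A10) one also needs the substitution lemma $(\phi[t/x])^M(\a)=\phi^M(\a,t^M(\a))$ for substitutable $t$, proved by induction on $\phi$. The integral clause of that induction is immediate, since substitutability ensures the integrated variable does not occur in $t$.
\item The integral axioms (A15)--(A18) follow from the properties of the integral of bounded continuous (hence integrable) functions with respect to the probability charge $\mu$. These properties are $\mu(M)=1$, linearity, and the integral of a constant being that constant; (A18) uses that $\phi^M(\a,y)$ does not depend on $y$.
\item The pseudometric, bound and Lipschitz axioms (A19)--(A24) are exactly the requirements of Definition \ref{prestructure}.
\end{itemize}

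For the inductive step, rules (R1)--(R3) preserve validity by trivial order properties of $\Rn$; in (R3) we use $r\geqslant 0$. For the $\sup$ rule, suppose $\Gamma\vdash_k\phi\leqslant\psi$ with $x$ not free in $\Gamma$, and let $M\vDash\Gamma$. Fix values $\a$ of the other variables. Since $x$ is not free in $\Gamma$, $M$ remains a model of $\Gamma$ under every value $b$ of $x$. The induction hypothesis then gives $\phi^M(\a,b)\leqslant\psi^M(\a,b)$ for all $b$, and taking suprema gives the claim. The integral rule is handled the same way. From $\phi^M(\a,y)\leqslant\psi^M(\a,y)$ for all $y$ we get $\int\phi^M(\a,y)\,d\mu\leqslant\int\psi^M(\a,y)\,d\mu$ by monotonicity of the integral with respect to a positive charge. (The printed rule (R5) has a typo; the conclusion is meant to be $\int\phi\,dx\leqslant\int\psi\,dx$, as in Definition \ref{proofdfn}.)

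The only point needing care, and the main obstacle, is how free variables interact with $\Gamma$ in the quantifier rules. The induction hypothesis must be stated for arbitrary assignments, so that the side condition ``$x$ not free in $\Gamma$'' yields the universal statement over $x$. The substitution lemma for (A10) is the other place requiring a short separate induction. Everything else is routine verification.
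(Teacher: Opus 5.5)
Your proposal is correct and takes the paper's approach: the paper only says that soundness is proved by induction on the length of proof (the index $n$ in $\vdash_n$), and your checks of the axioms, of rules (R1)--(R3), and of the side-conditioned $\sup$ and $\int$ rules carry out exactly that induction. One small tidy-up: state the induction hypothesis pointwise, for a structure together with an assignment, rather than under the purely universal reading you announce at the start; under the universal reading the side condition ``$x$ not free in $\Gamma$'' is not even needed, so your remark about it only has content in the pointwise form.
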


It is easily proved by induction on the complexity of terms and formulas that for every term
$t(\x)$ and formula $\phi(\x)$ one has that
$$\vdash\ -{\sf b}_\phi\leqslant\phi\leqslant{\sf b}_\phi$$
$$\vdash\ \rho(t(\x),t(\y))\leqslant\lambda_t \rho(\x,\y)$$
$$\vdash\ \phi(\x)-\phi(\y)\leqslant\lambda_\phi \rho(\x,\y).$$

The following two lemmas are also routine.
\begin{lemma} \label{easy consequence}
For each $\Gamma$, $\phi,\psi$ and $r,s$ the following hold:

{\em(i)} $r=0\vdash r\phi=0$

{\em(ii)} $\phi=0\vdash r\phi=0$

{\em(iii)} $\{r=s,\ \phi=\psi\}\vdash r\phi=s\psi$

{\em(iv)} If $\Gamma\vdash 1\leqslant 0$ then  $\Gamma\vdash\Sc$
for every condition $\Sc$.
\end{lemma}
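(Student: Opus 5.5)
Each item will be derived directly from the linearity axioms (A1)--(A9) and the rules (R1)--(R3) of Definition \ref{proofdfn}. No quantifier or integral axioms are needed. Throughout, I treat $r\phi$ for a real $r$ in two roles: as the scalar multiple of $\phi$, and, when $\phi$ is the atomic formula $1$, as the constant formula $r=r\cdot1$. The axioms (A7), (A8) identify these two readings.

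For (i), assume $r=0$, that is, $r\leqslant0$ and $0\leqslant r$ as conditions between constant formulas. I will first obtain the two bounds $-{\sf b}_\phi\leqslant\phi\leqslant{\sf b}_\phi$, which are provable by the remark preceding the lemma. The difficulty is that $r$ is a formula, not a scalar, so rule (R3) does not let us multiply by it directly. Here I would use that the hypothesis $r=0$ is a condition between sentences whose truth is decided by (A1). If $r\neq0$ as a real, then (A1) gives, for example, $0\leqslant r-\delta$ for some $\delta>0$. Combined with $r\leqslant0$ via (R1) and (R2), this yields $\delta\leqslant0$, and scaling with (R3) gives $1\leqslant0$. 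From $1\leqslant0$ every condition follows: add $\psi-\phi$-type terms with (R2) and scale with (R3) to get $\phi\leqslant\psi$ for arbitrary $\phi,\psi$ using (A2)--(A9). That is exactly (iv), so I will prove (iv) first. If $r=0$ as a real, then $r\phi=0\phi=0$ by (A9). So (i) reduces to (iv) plus (A9).

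For (ii), suppose $\phi\leqslant0$ and $0\leqslant\phi$. If $r\geqslant0$, (R3) gives $r\phi\leqslant r0$ and $r0\leqslant r\phi$. Then (A5) and (A9) give $r0=0$: from $r0=r(0+0)=r0+r0$, cancel by adding $-r0$ with (R2) and use (A6). If $r<0$, I will write $r\phi=(-|r|)\phi=|r|((-1)\phi)$ by (A7). From $\phi\leqslant0$, (R2) adds $(-1)\phi$ to both sides; (A6) and (A9) give $\phi+(-1)\phi=0$, so we get $0\leqslant(-1)\phi$. Symmetrically $(-1)\phi\leqslant0$, and then I scale by $|r|$ as before.

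For (iii), assume $r=s$ and $\phi=\psi$. By the same (A1)-case analysis as in (i), either $r,s$ are equal reals, or $\{r=s\}$ proves $1\leqslant0$ and (iv) finishes. In the first case, $\phi-\psi=0$ follows from $\phi\leqslant\psi$ by adding $-\psi$ with (R2) and using (A2)--(A4), (A6), (A9). Then (ii) gives $r(\phi-\psi)=0$, and (A5), (A7) turn this into $r\phi=r\psi=s\psi$.

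The only delicate point is the treatment of real constants as formulas in (i) and (iii). The rest is bookkeeping with the linearity axioms. The main step to write carefully is (iv), namely the derivation of an arbitrary $\phi\leqslant\psi$ from $1\leqslant0$. I would do it as follows. Scale $1\leqslant0$ by $c={\sf b}_\phi+{\sf b}_\psi\geqslant0$ using (R3) and (A9) to get $c\leqslant0$. Add $\psi-{\sf b}_\psi$ to both sides with (R2), which gives ${\sf b}_\phi+\psi\leqslant\psi-{\sf b}_\psi$. Then chain with (R1) through the provable inequalities $\phi\leqslant{\sf b}_\phi$ and $-{\sf b}_\psi\leqslant\psi$, obtaining $\phi\leqslant\psi$ after rearranging with (A2)--(A9).
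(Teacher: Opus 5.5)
Your proof is correct. The paper gives no proof of this lemma and calls it routine. Your route is the natural verification: case-split on the actual real values of $r$ (resp.\ $r,s$), refute the hypothesis in the non-trivial cases via (A1), and finish with the explosion principle (iv), which you derive correctly from the provable bounds $-{\sf b}_\phi\leqslant\phi\leqslant{\sf b}_\phi$.

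A few small points to tidy:
\begin{itemize}
\item In (iii), getting $\phi-\psi=0$ needs both halves $\phi\leqslant\psi$ and $\psi\leqslant\phi$ of the hypothesis, not only the first.
\item The bounds are stated as theorems of the empty theory. Using them under $\Gamma$ in (iv) tacitly uses monotonicity, Lemma~\ref{deduction1}(i), because the quantifier and integral rules carry the side condition that $x$ is not free in $\Gamma$.
\item The one-line sketch of explosion inside your treatment of (i) (adding $\psi-\phi$-type terms and scaling, using (A2)--(A9)) would not work on its own. The linear axioms and rules alone do not make $1\leqslant0$ explosive. It is the bound inequalities, ultimately (A24), that do, so the derivation in your final paragraph is the one that carries the argument.
\item $r0=0$ is immediate from (A7) and (A9) once the formula $0$ is read as $0\cdot1$; the detour through $r(0+0)$ is unnecessary.
\end{itemize}
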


\begin{lemma} \label{deduction1}
(i) If $\Gamma\subseteq\Gamma'$ and $\Gamma\vdash_n\Sc$ then $\Gamma'\vdash_n\Sc$.

(ii) Let $\bar\Gamma=\{\Sc:\ \Gamma\vdash\Sc\}$. Then, $\bar\Gamma\vdash\Sc$ implies $\Gamma\vdash\Sc$.

(iii) If $\Gamma\vdash\Sc$, then $\Gamma_0\vdash\Sc$ for some finite $\Gamma_0\subseteq\Gamma$.
\end{lemma}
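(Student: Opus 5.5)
All three parts will go by induction on $n$, following the clauses of Definition \ref{proofdfn}. For each clause I will check that the witnesses used at earlier stages carry over. The relation $\vdash_n$ is built from finitely many applications of clauses, each of which looks at finitely many earlier derivations. So the arguments are structural, and the only real care needed is with the side conditions on free variables in the generalization clauses for $\sup_x$ and $\int dx$.

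For (i), induct on $n$. At $n=0$: if $\Sc\in\Gamma$ then $\Sc\in\Gamma'$, and logical axioms do not depend on $\Gamma$. At stage $n$, go through each clause. The first two clauses transfer immediately by the induction hypothesis applied to $k<n$. For the $\sup$ and $\int$ clauses, the hypothesis ``$x$ not free in $\Gamma$'' does \emph{not} automatically pass to $\Gamma'$. This is the one delicate point. I would resolve it by reading the side condition as concerning only the premises actually used, which is the standard reading. If that is not available, I would first rename the bound variable: pick $y$ free in neither $\Gamma'$ nor $\phi,\psi$, derive $\phi[y/x]\leqslant\psi[y/x]$ (by a substitution lemma proved alongside, by the same induction), generalize over $y$, and use $\sup_y\phi[y/x]=\sup_x\phi$, and likewise for $\int$. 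I expect this to be the main obstacle. In practice the paper's convention is presumably that $\Gamma$ consists of sentences (closed conditions), in which case the side condition is vacuous and (i) is a trivial induction.

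For (ii), prove by induction on $n$ that $\bar\Gamma\vdash_n\Sc$ implies $\Gamma\vdash\Sc$. At $n=0$, $\Sc$ is either an axiom or lies in $\bar\Gamma$, hence $\Gamma\vdash\Sc$ by definition. For rules (R1)--(R3) with premises $\Sc_1,\Sc_2$, the induction hypothesis gives $\Gamma\vdash_{k_1}\Sc_1$ and $\Gamma\vdash_{k_2}\Sc_2$. Since the chain $\vdash_k$ is increasing, both hold at $\max(k_1,k_2)$, and then $\Gamma\vdash_{\max+1}\Sc$. For the generalization clauses, the free variables of $\Gamma$ and $\bar\Gamma$ need comparing. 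Every free variable of $\Gamma$ either is free in $\bar\Gamma$ (as $\Gamma\subseteq\bar\Gamma$) or can be avoided, so if $x$ is not free in $\bar\Gamma$ it is not free in $\Gamma$, and the clause applies directly.

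For (iii), induct on $n$ again, producing finite $\Gamma_0$. At stage $0$, take $\Gamma_0=\{\Sc\}$ or $\emptyset$. For a binary rule, take the union of the two finite sets and use (i) to lift both derivations to that union at a common level. For generalization, take the same $\Gamma_0$ as for the premise. The side condition survives since $\Gamma_0\subseteq\Gamma$, so $x$ not free in $\Gamma$ implies $x$ not free in $\Gamma_0$. Here the monotonicity direction is favorable, so no renaming is needed.
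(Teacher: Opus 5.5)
Your proposal follows the paper's proof: induction on $n$ in each part, with the free-variable side condition of the $\sup$/$\int$ generalization clauses in (i) as the only delicate point, which the paper also resolves by a change of variables. Your closed-theory reading is the safest of your options, since renaming presupposes some variable not free in $\Gamma'$. Parts (ii) and (iii) are the same inductions as in the paper, which simply calls (iii) obvious.
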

\begin{proof}
(i): Proceed by induction on $n$. In the quantifier cases a change of variables is needed.

(ii): It is proved by induction on $n$ that $\bar\Gamma\vdash_n\Sc$ implies $\Gamma\vdash\Sc$ for every $\Gamma$ and $\Sc$.

(iii): Obvious.
\end{proof}

\begin{lemma}
$\Gamma,0\leqslant\theta\vdash\phi\leqslant\psi$ if and only if $\Gamma\vdash r\theta+\phi\leqslant\psi$ for some $r\geqslant0$.
\end{lemma}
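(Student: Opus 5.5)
The plan is to prove the two directions separately. The backward direction is a short derivation; the forward direction is a deduction theorem proved by induction on the index $n$ in $\Gamma,0\leqslant\theta\vdash_n\phi\leqslant\psi$ from Definition \ref{proofdfn}.

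\emph{Backward direction.} Suppose $\Gamma\vdash r\theta+\phi\leqslant\psi$ with $r\geqslant0$. From $0\leqslant\theta$, rule (R3) gives $r0\leqslant r\theta$. Axioms (A9), (A17)-style linearity and (A5) let us rewrite $r0$ as $0$, so $0\leqslant r\theta$. Rule (R2) then adds $\phi$ to both sides, giving $0+\phi\leqslant r\theta+\phi$. Axioms (A3) and (A4) turn the left side into $\phi$. Rule (R1) chains this with the hypothesis to give $\phi\leqslant\psi$. By Lemma \ref{deduction1}(i) this derivation goes through from $\Gamma,0\leqslant\theta$.

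\emph{Forward direction, base cases.} We prove: if $\Gamma,0\leqslant\theta\vdash_n\phi\leqslant\psi$, then $\Gamma\vdash r\theta+\phi\leqslant\psi$ for some $r\geqslant0$. Conditions written $\phi=\psi$ are just pairs of inequalities, so they need no separate treatment.
\begin{itemize}
\item If the condition is $0\leqslant\theta$ itself, take $r=1$. We need $\Gamma\vdash\theta+0\leqslant\theta$, which follows from (A3) and (A4).
\item If it is in $\Gamma$ or is a logical axiom, take $r=0$ and use (A9) and (A4).
\end{itemize}

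\emph{Forward direction, rule cases.}
\begin{itemize}
\item (R1): we have $\Gamma\vdash r\theta+\phi\leqslant\psi$ and $\Gamma\vdash s\theta+\psi\leqslant\chi$. Add $s\theta$ to the first via (R2), rearrange with (A2), (A3) and (A6), and chain with (R1). The constant is $r+s$.
\item (R2): add $\chi$ to both sides and reassociate; $r$ is unchanged.
\item (R3): when multiplying by $t\geqslant0$, apply (R3) and distribute with (A5) and (A7). The new constant is $tr$.
\end{itemize}
All of these are routine linear manipulations.

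\emph{Main obstacle: the quantifier rules.} Suppose $\Gamma,0\leqslant\theta\vdash_k\phi\leqslant\psi$ with $x$ not free in $\Gamma,0\leqslant\theta$, and we must handle $\sup_x\phi\leqslant\sup_x\psi$. By induction, $\Gamma\vdash r\theta+\phi\leqslant\psi$, and $x$ is not free in $\Gamma$, so the $\sup$ clause of Definition \ref{proofdfn} applies from $\Gamma$ alone. This yields $\sup_x(r\theta+\phi)\leqslant\sup_x\psi$. Since $x$ is not free in $\theta$, axiom (A11) rewrites the left side as $r\theta+\sup_x\phi$, which is what we need.

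The integral rule is analogous: from $\Gamma\vdash r\theta+\phi\leqslant\psi$ the integral clause gives $\int(r\theta+\phi)dx\leqslant\int\psi dx$. Axioms (A16), (A17) and (A18) rewrite the left side as $r\theta+\int\phi dx$.

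The delicate point is that the side condition ``$x$ not free'' must hold for the enlarged set $\Gamma\cup\{0\leqslant\theta\}$ in the original derivation. That is exactly what lets us apply (A11) and (A18) to $\theta$, so the induction closes.
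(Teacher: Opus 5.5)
Your plan follows the paper's proof: induction on $n$ in $\Gamma,0\leqslant\theta\vdash_n\phi\leqslant\psi$, with $r=0$ or $r=1$ in the base case, and $\theta$ pulled out of $\sup_x$ and $\int dx$ via (A11) and (A16)--(A18). However, your (R3) case has a genuine hole.

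You write ``when multiplying by $t\geqslant0$''. But in Definition \ref{proofdfn} the premise $0\leqslant t$ of (R3) is just a condition that has to be derivable from $\Gamma,0\leqslant\theta$, and nothing forces the real number $t$ to be nonnegative. If $\Gamma\cup\{0\leqslant\theta\}$ is inconsistent, then $0\leqslant t$ with $t<0$ is derivable, and (R3) yields $t\phi\leqslant t\psi$ with $t<0$. Your recipe fails at this point for two reasons. First, $\Gamma$ alone need not prove $0\leqslant t$, so you cannot re-apply (R3) from $\Gamma$. Second, the proposed constant $tr$ is negative.

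This case is not vacuous. It is exactly the situation in which the lemma is used: Lemmas \ref{consequence2} and \ref{mean-proof0} apply it to an inconsistent $\Gamma,0\leqslant\theta$. An arbitrary derivation of $1\leqslant0$ may pass through such a step, and your induction must cover every derivation.

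The repair, which the paper's proof carries out, is to apply the induction hypothesis to the premise $0\leqslant t$ as well. This gives $\Gamma\vdash r'\theta\leqslant t$ for some $r'\geqslant0$.

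If $r'=0$, then $\Gamma\vdash0\leqslant t$ with $t<0$. Applying (R3) with the positive scalar $-1/t$ and then (R2) gives $\Gamma\vdash1\leqslant0$, so $\Gamma$ proves every condition.

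If $r'>0$, then $\Gamma\vdash\theta\leqslant t/r'$, a strictly negative bound. Combine this with the provable bounds $-\mathsf b_\phi\leqslant\phi\leqslant\mathsf b_\phi$ and $-\mathsf b_\psi\leqslant\psi\leqslant\mathsf b_\psi$. This gives $\Gamma\vdash R\theta+t\phi\leqslant t\psi$ with $R=r'(\mathsf b_\phi+\mathsf b_\psi)$, since $R\theta+t\phi\leqslant t(\mathsf b_\phi+\mathsf b_\psi)-t\mathsf b_\phi=t\mathsf b_\psi\leqslant t\psi$.

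With this case added, the rest of your argument goes through. One small slip: in the backward direction, the axiom you want for $r0=0$ is (A7), not (A17).
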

\begin{proof}
It is proved by induction on $n$ that if $\Gamma,0\leqslant\theta\vdash_n\phi\leqslant\psi$
then there is $r\geqslant0$ such that $\Gamma\vdash r\theta+\phi\leqslant \psi$.
In case $n=0$, set $r=0$ if $\phi\leqslant\psi$ is a logical axiom or belongs to $\Gamma$ and set $r=1$ if
$\phi\leqslant\psi$ coincides with the condition $0\leqslant\theta$.
Assume $$\Gamma,0\leqslant\theta\vdash_n\int\phi dx\leqslant\int\psi dx$$
and for some $k<n$ one has that
$$\ \Gamma,0\leqslant\theta\vdash_k\phi\leqslant\psi\hspace{10mm} (x\ \textrm{is not free in}\ \Gamma,0\leqslant\theta).$$
Then, by the induction hypothesis, for some $r\geqslant0$ one has that
$$\Gamma\ \vdash\ r\theta+\phi\leqslant\psi.$$
Therefore, since $\int\theta dx=\theta$, $$\Gamma\ \vdash\ r\theta+\int\phi dx\leqslant\int\psi dx.$$
Similarly, assume $\Gamma,0\leqslant\theta\ \vdash_n s\phi\leqslant s\psi$ and for some $k<n$ one has that
$$\Gamma,0\leqslant\theta\ \vdash_k\ 0\leqslant s,\hspace{14mm}\Gamma,0\leqslant\theta\ \vdash_k\ \phi\leqslant\psi.$$
Then, there is $r\geqslant0$ such that $\Gamma\vdash r\theta+\phi\leqslant\psi$. Therefore, if $s\geqslant0$,
then $\Gamma\vdash rs\theta+s\phi\leqslant s\psi$. Also, if $s<0$, then we must have that
$\Gamma\vdash\theta\leqslant0$ in which case $\Gamma\vdash\theta+\phi\leqslant\psi$.
All other cases are proved similarly. The inverse direction is obvious.
\end{proof}

\begin{lemma}\label{consequence2}
(i) If $\Gamma$ is consistent, then either $\Gamma,0\leqslant\phi$ is consistent or
there is $\epsilon>0$ such that $\Gamma\vdash\phi\leqslant-\epsilon$
(in which case $\Gamma,\phi\leqslant-\epsilon$ is consistent).

(ii) If $\Gamma\nvdash\phi\leqslant0$ then $\Gamma, 0\leqslant\phi$ is consistent.

(iii) If $\Gamma,-\frac{1}{n}\leqslant\phi$ is consistent for each $n\geqslant1$, then $\Gamma,0\leqslant\phi$ is consistent.
\end{lemma}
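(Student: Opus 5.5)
All three parts will follow from the deduction lemma just proved: $\Gamma,0\leqslant\theta\vdash\phi\leqslant\psi$ iff $\Gamma\vdash r\theta+\phi\leqslant\psi$ for some $r\geqslant0$. We apply it with $\theta=\phi$ and target $1\leqslant0$, and then use the linearity axioms and rules (R1)--(R3).

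For (i), suppose $\Gamma$ is consistent but $\Gamma,0\leqslant\phi$ is inconsistent, i.e. $\Gamma,0\leqslant\phi\vdash1\leqslant0$. The deduction lemma gives $r\geqslant0$ with $\Gamma\vdash r\phi+1\leqslant0$. If $r=0$, then by (A9) and (A4) we get $\Gamma\vdash1\leqslant0$, which is impossible. So $r>0$. Applying (R2) with $-1$ and then (R3) with $1/r$, and simplifying with (A2)--(A9), we obtain $\Gamma\vdash\phi\leqslant-\frac1r$; take $\epsilon=1/r$. For the parenthetical claim, note that anything derivable from $\Gamma,\phi\leqslant-\epsilon$ is already derivable from $\Gamma$, since $\Gamma\vdash\phi\leqslant-\epsilon$ (Lemma \ref{deduction1}(ii)). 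Hence $\Gamma,\phi\leqslant-\epsilon$ is consistent.

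For (ii), we argue by contraposition. If $\Gamma,0\leqslant\phi$ is inconsistent, then either $\Gamma$ itself is inconsistent, in which case $\Gamma\vdash\phi\leqslant0$ by Lemma \ref{easy consequence}(iv), or, by (i), $\Gamma\vdash\phi\leqslant-\epsilon$. In the second case (A1) gives $-\epsilon\leqslant0$, and (R1) yields $\Gamma\vdash\phi\leqslant0$. Either way $\Gamma\vdash\phi\leqslant0$, as required.

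For (iii), suppose $\Gamma,0\leqslant\phi$ is inconsistent. If $\Gamma$ were inconsistent, so would be every $\Gamma,-\frac1n\leqslant\phi$, contrary to the hypothesis. So $\Gamma$ is consistent, and (i) gives $\epsilon>0$ with $\Gamma\vdash\phi\leqslant-\epsilon$. Choose $n$ with $\frac1n<\epsilon$. From $-\frac1n\leqslant\phi$ and $\phi\leqslant-\epsilon$, rule (R1) gives $-\frac1n\leqslant-\epsilon$. Combining this with the axiom (A1) instance $-\epsilon\leqslant-\frac1n-\delta$, where $\delta=\epsilon-\frac1n>0$, and rearranging linearly, we reach $\delta\leqslant0$; scaling by $1/\delta$ with (R3) gives $1\leqslant0$. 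Thus $\Gamma,-\frac1n\leqslant\phi$ is inconsistent, a contradiction.

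The only real obstacle is bookkeeping: each linear rearrangement (moving terms across $\leqslant$, scaling by positive reals) must be justified from (A1)--(A9) and (R1)--(R3). These are routine derivations in the style of Lemma \ref{easy consequence}, and we will treat them as such.
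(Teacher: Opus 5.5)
Your proposal is correct and follows the paper's proof: apply the deduction lemma with $\theta=\phi$ and target $1\leqslant 0$ to get $\Gamma\vdash r\phi+1\leqslant 0$, rule out $r=0$ by consistency, divide by $r$, and then derive (ii) and (iii) from (i). You also make explicit several points the paper leaves tacit, namely the inconsistent-$\Gamma$ cases in (ii) and (iii), the final contradiction in (iii), and why $\Gamma,\phi\leqslant-\epsilon$ is consistent; these additions are all sound.
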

\begin{proof}
(i) Assume $\Gamma,0\leqslant\phi\vdash1\leqslant0$. Then, there is $r\geqslant0$ such that
$\Gamma\vdash r\phi+1\leqslant0$. Clearly, $r\neq0$. So, $\Gamma\vdash\phi\leqslant\frac{-1}{r}$.

(ii) is a consequences of (i).

(iii): Otherwise, there must exist $\epsilon>0$ such that $\Gamma\vdash\phi\leqslant-\epsilon$.
This is a contradiction.
\end{proof}

\begin{lemma}
Assume $\Gamma,0\leqslant\theta\ \vdash\phi\leqslant\psi$ and $\Gamma,\theta\leqslant0\ \vdash\phi\leqslant\psi$.
Then, $\Gamma\vdash\phi\leqslant\psi$.
\end{lemma}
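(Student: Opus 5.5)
We will use the deduction-type lemma just proved: $\Gamma,0\leqslant\theta\vdash\phi\leqslant\psi$ holds if and only if $\Gamma\vdash r\theta+\phi\leqslant\psi$ for some $r\geqslant0$. We apply it to both hypotheses. Since $\theta\leqslant0$ is the same as $0\leqslant-\theta$ (provably, using the linearity axioms and rules R2, R3), the first hypothesis yields some $r\geqslant0$ with
$$\Gamma\vdash r\theta+\phi\leqslant\psi,$$
and the second yields some $s\geqslant0$ with
$$\Gamma\vdash -s\theta+\phi\leqslant\psi.$$
The plan is to take a positive combination of these two conditions so that the $\theta$ terms cancel.

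First we dispose of the degenerate cases. If $r=0$, the first derived condition is already $\phi\leqslant\psi$, up to the axioms A9 and A4. If $s=0$, the same holds for the second. So we may assume $r,s>0$.

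In the main case we multiply the first condition by $s$ and the second by $r$, using rule R3 with the provable conditions $0\leqslant s$ and $0\leqslant r$ from A1. This gives $rs\theta+s\phi\leqslant s\psi$ and $-rs\theta+r\phi\leqslant r\psi$. Adding them uses rule R2 twice together with R1. Then, with A2, A3, A5–A9, we cancel $rs\theta-rs\theta=0$ and obtain
$$\Gamma\vdash(r+s)\phi\leqslant(r+s)\psi.$$
Finally we apply R3 with the scalar $\frac{1}{r+s}>0$ and A7, A8 to conclude $\Gamma\vdash\phi\leqslant\psi$.

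The only point needing care is the bookkeeping in the addition step. From $\alpha\leqslant\beta$ and $\gamma\leqslant\delta$ we must derive $\alpha+\gamma\leqslant\beta+\delta$. We do this by first getting $\alpha+\gamma\leqslant\beta+\gamma$ via R2, then $\gamma+\beta\leqslant\delta+\beta$ via R2, then commuting with A3 and chaining with R1. After that we rearrange with the linearity axioms. Also, when invoking the previous lemma for $\theta\leqslant0$, we have to replace this condition by the provably equivalent $0\leqslant-\theta$. This replacement is harmless: each condition is derivable from the other, so by Lemma \ref{deduction1}(ii) the two sets of hypotheses prove the same conditions. This is all routine, and no step should be a real obstacle.
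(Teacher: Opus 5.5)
Your proof is correct and is essentially the paper's argument. The deduction lemma applied to both hypotheses gives $\Gamma\vdash r\theta+\phi\leqslant\psi$ and $\Gamma\vdash -s\theta+\phi\leqslant\psi$; the cases $r=0$ or $s=0$ are immediate; otherwise the combination with weights $s$ and $r$ yields $(r+s)\phi\leqslant(r+s)\psi$, hence $\phi\leqslant\psi$. You merely make explicit steps the paper leaves implicit, namely rewriting $\theta\leqslant0$ as $0\leqslant-\theta$ before invoking the deduction lemma and the routine R1--R3 bookkeeping.
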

\begin{proof}
There are $r,s\geqslant0$ such that
$$\Gamma\ \vdash\ r\theta+\phi\leqslant\psi,\ \ \ \ \Gamma \vdash\ -s\theta+\phi\leqslant\psi.$$
If $r=0$ or $s=0$, we are done. Otherwise, we have that
$$\Gamma\ \vdash\ (r+s)\phi\leqslant(r+s)\psi$$ which implies the intended claim.
\end{proof}

\begin{lemma}\label{mean-proof0}
Assume both $\Gamma,0\leqslant\phi$ and $\Gamma,\phi\leqslant0$ are consistent. Then, $\Gamma,\phi=0$ is consistent.
\end{lemma}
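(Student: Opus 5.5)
We argue by contradiction, following the semantic version Lemma~\ref{mean0} but replacing satisfiability by consistency and using the deduction lemma proved just above. Suppose $\Gamma,\phi=0$ is inconsistent, i.e.\ $\Gamma,0\leqslant\phi,\phi\leqslant0\vdash1\leqslant0$. Since $\phi\leqslant0$ is the same as $0\leqslant-\phi$ up to the linearity axioms (A1)--(A9) and rules (R2), (R3), we apply the deduction lemma twice. The first application removes $0\leqslant\phi$ and gives $r\geqslant0$ with
$$\Gamma,\ 0\leqslant-\phi\ \vdash\ r\phi+1\leqslant0 .$$
The second application removes $0\leqslant-\phi$ and gives $s\geqslant0$ with
$$\Gamma\ \vdash\ -s\phi+r\phi+1\leqslant0,\qquad\text{i.e.}\qquad \Gamma\vdash (r-s)\phi\leqslant-1 .$$
The rewriting from the first form to the second uses only (A2)--(A9) and (R1), (R2).

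Next we split into cases according to the sign of $r-s$.

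If $r=s$, then (A9) gives $\Gamma\vdash0\leqslant-1$, and hence $\Gamma\vdash1\leqslant0$ by (R2). So $\Gamma$ itself is inconsistent, and then so is $\Gamma,0\leqslant\phi$ by Lemma~\ref{deduction1}(i), contrary to the hypothesis.

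If $r>s$, put $t=r-s>0$ and multiply by $1/t$ using (R3), (A7) and (A1). This gives $\Gamma\vdash\phi\leqslant-\frac1t$. Adding the hypothesis $0\leqslant\phi$ and chaining with (R1), we get $\Gamma,0\leqslant\phi\vdash0\leqslant-\frac1t$. Multiplying by $t$ with (R3) and rearranging with (R2) yields $1\leqslant0$, so $\Gamma,0\leqslant\phi$ is inconsistent, a contradiction.

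If $r<s$, put $t=s-r>0$. Then $\Gamma\vdash t(-\phi)\leqslant-1$, hence $\Gamma\vdash\frac1t\leqslant\phi$. Chaining with $\phi\leqslant0$ shows that $\Gamma,\phi\leqslant0$ is inconsistent, again a contradiction.

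The main point to check is the handling of the two-hypothesis deduction. The lemma above removes one hypothesis of the form $0\leqslant\theta$ at a time, and we must make sure that applying it to $\Gamma\cup\{0\leqslant-\phi\}$ in place of $\Gamma$ is legitimate, including the variable-freeness side conditions in the $\sup$ and $\int$ rules. This is harmless when $\phi$ is a sentence, as conditions in theories here are. If $\phi$ has free variables, we first replace them by new constants, or simply note that the deduction lemma is stated for arbitrary $\Gamma$ and $\theta$. The rest consists of routine manipulations with the linearity axioms and Lemma~\ref{easy consequence}.
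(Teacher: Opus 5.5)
Your proof is correct and takes essentially the same route as the paper: you apply the preceding deduction lemma twice (reading $\phi\leqslant0$ as $0\leqslant-\phi$) and then split into cases on the sign of the resulting coefficient of $\phi$. The only cosmetic difference is that the paper first obtains $r>0$ and normalizes to $\phi\leqslant-\frac{1}{r}$ before the second application, while you keep both $r,s$ and dispose of the degenerate case $r=s$ explicitly via the inconsistency of $\Gamma$.
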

\begin{proof}
Suppose that $$\Gamma,\phi\leqslant0,\ 0\leqslant\phi\ \ \vdash\ 1\leqslant0.$$
Then, there is $r>0$ such that
$$\Gamma,\phi\leqslant0\ \ \vdash\ \ r\phi+1\leqslant0\ \ \vdash\ \ \phi\leqslant-\frac{1}{r}.$$
Hence, there is $s\geqslant0$ such that
$$\Gamma\ \vdash\ -s\phi+\phi\leqslant-\frac{1}{r}$$
Clearly, $s\neq1$. Moreover, in the case $s<1$ we have that $\Gamma\vdash\phi\leqslant\frac{-1}{r(1-s)}$
and in the case $s>1$ we have that $\Gamma\vdash\frac{1}{r(s-1)}\leqslant\phi$.
Both these consequences contradict the assumptions.
\end{proof}

\begin{lemma}\label{c1} Assume $y$ does not occur in $\Sc$ and it is not free in $\Gamma$.
If $c$ does not occur in $\Gamma$ and  $\Gamma\vdash\Sc$, then $\Gamma\vdash\Sc[y/c]$.
\end{lemma}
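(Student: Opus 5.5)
We argue by induction on $n$, proving the stronger claim: if $c$ does not occur in $\Gamma$, $y$ does not occur in $\Sc$ and is not free in $\Gamma$, and $\Gamma\vdash_n\Sc$, then $\Gamma\vdash\Sc[y/c]$. Here $\Sc[y/c]$ means that every occurrence of $c$ in $\Sc$ is replaced by $y$. Induction on $n$ is better than on formula complexity, because the derivation may pass through conditions containing $c$ and other variables. The key point is that substituting a variable for a constant commutes with all the syntactic operations of the system: $(\phi+\psi)[y/c]=\phi[y/c]+\psi[y/c]$, $(r\phi)[y/c]=r\,\phi[y/c]$, $(\sup_x\phi)[y/c]=\sup_x(\phi[y/c])$ and $(\int\phi\,dx)[y/c]=\int\phi[y/c]\,dx$ whenever $x\neq y$.

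\emph{Base case $n=0$.} If $\Sc\in\Gamma$, then $c$ does not occur in $\Sc$, so $\Sc[y/c]=\Sc$. If $\Sc$ is a logical axiom, we check that each of (A1)--(A24) is closed under this substitution. The linearity, pseudometric, bound and Lipschitz axioms are schemata in arbitrary formulas and terms, so their images are again instances. The integral axioms (A15)--(A18) and the quantifier axioms (A11)--(A14) are also preserved, using that $y$ is new: in (A11) and (A18), ``$x$ not free in $\psi$'' remains true after substitution, since $y\neq x$ because $y$ does not occur in $\Sc$. For (A10), $\phi[t/x]\leqslant\sup_x\phi$ becomes $\phi[y/c][t[y/c]/x]\leqslant\sup_x\phi[y/c]$. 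This is again an instance of (A10), because $t[y/c]$ is substitutable for $x$ in $\phi[y/c]$: $y$ is not bound anywhere in $\Sc$ by the hypothesis on $y$.

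\emph{Inductive step.} For the rules (R1)--(R3), the premises $\Sc_1,\Sc_2$ may contain variables or constants in a way that prevents applying the induction hypothesis directly with the same $y$. In particular, $y$ itself might occur in $\Sc_1$ or $\Sc_2$, e.g.\ in the middle formula $\psi$ of (R1). We therefore first rename. Choose a variable $z$ that occurs neither in $\Gamma$ nor in the finitely many conditions of the given derivation (possible by Lemma \ref{deduction1}(iii) and finiteness). We also need a preliminary observation, proved by the same induction: a free variable not free in $\Gamma$ can be renamed to a fresh one throughout a derivation of length $n$ with length preserved. Using it, apply the induction hypothesis to $\Sc_1,\Sc_2$ with $z$, obtaining $\Gamma\vdash\Sc_i[z/c]$. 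Apply the rule to get $\Gamma\vdash\Sc[z/c]$, and finally rename $z$ to $y$. The rule instances commute with substitution; for (R3), the premise $0\leqslant r$ contains no $c$. For the quantifier clauses, suppose $\Sc$ is $\sup_x\phi\leqslant\sup_x\psi$ or $\int\phi\,dx\leqslant\int\psi\,dx$, with $x$ not free in $\Gamma$ and $\Gamma\vdash_k\phi\leqslant\psi$. Since $y$ does not occur in $\Sc$, we have $x\neq y$. The induction hypothesis gives $\Gamma\vdash\phi[y/c]\leqslant\psi[y/c]$, and since $x$ is still not free in $\Gamma$, the same clause yields $\Gamma\vdash\Sc[y/c]$.

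The main obstacle is this bookkeeping with fresh variables in the rule cases, which ensures that $y$ does not clash with variables appearing in intermediate conditions. The cleanest route is to prove simultaneously the renaming lemma for free variables not free in $\Gamma$, together with Lemma \ref{c1}, by a single induction on $n$. Both rely on the same observation: every axiom schema and every rule is invariant under injective renaming of symbols not occurring in $\Gamma$. Lemma \ref{deduction1}(i) lets us enlarge $\Gamma$ harmlessly if needed.
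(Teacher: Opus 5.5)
Your plan is the paper's: induction on $n$ through the three clauses of Definition \ref{proofdfn}. The paper, however, simply applies the induction hypothesis to the premises $\Sc_1,\Sc_2$ with the same $y$. You are right that this is not licensed when $y$ occurs in the middle formula $\psi$ of (R1), for instance bound with $c$ in its scope. So you are addressing a real omission, but your repair does not close it.

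\textbf{The gap is the step ``finally rename $z$ to $y$''.} The observation you cite renames a variable to one that is fresh for the whole derivation. The derivation of $\Sc[z/c]$ you have just built passes through $\Sc_1[z/c]$ and $\Sc_2[z/c]$, which may still contain $y$ (e.g.\ bound inside $\psi$). That is exactly the clash you set out to avoid. If you instead state the renaming lemma for any target variable merely absent from the final condition, proving it by induction on $n$ meets the same (R1) clash. So the simultaneous induction does not close as sketched.

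\textbf{The fix:} this renaming needs no induction on derivations, because the calculus performs it directly. Write $\Sc[z/c]$ as $\phi'\leqslant\psi'$, so $\Gamma\vdash\phi'-\psi'\leqslant0$.
\begin{itemize}
\item Since $z$ is not free in $\Gamma$, the $\sup$-clause gives $\Gamma\vdash\sup_z(\phi'-\psi')\leqslant\sup_z0$, and $\sup_z0=0$ by (A9) and (A13).
\item Apply (A10) with $t=y$. This is legitimate because $y$ does not occur in $\phi'-\psi'$, so it is substitutable. This gives $\Gamma\vdash\phi'[y/z]-\psi'[y/z]\leqslant0$.
\item Since $z$ does not occur in $\Sc$, we have $\phi'[y/z]\leqslant\psi'[y/z]$ equal to $\Sc[y/c]$. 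Rearranging with (R2) and the linearity axioms therefore yields $\Gamma\vdash\Sc[y/c]$.
\end{itemize}
With this in place of your renaming lemma, the rest of your argument goes through, including your choice of $z$ relative to a finite part of $\Gamma$.
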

\begin{proof} We prove the claim by induction on $\Gamma\vdash_n\Sc$.
The case $n=0$ is obvious. Suppose it has been proved for $k<n$. We have the following cases:
\vspace{1mm}

- If $\Gamma\vdash_k\Sc$ for some $k<n$, we are done.
\vspace{1mm}

- There are $\Sc_1, \Sc_2$ such that
$\Gamma\vdash_k\Sc_1, \Sc_2$ for some $k<n$ and $\frac{\Sc_1, \Sc_2}{\Sc}$ is an instance of
the rules (R1-R3). Then, by the induction hypothesis,
$$\Gamma\ \vdash\ \ \Sc_1[y/c], \Sc_2[y/c]\ \ \vdash\ \ \Sc[y/c].$$
So, $\Gamma\ \vdash\ \Sc[y/c]$.
\vspace{1mm}

- $\Sc$ is of the form $\sup_x\phi\leqslant\sup_x\psi$ or of the form $\int\phi dx\leqslant\psi dx$, \
$x$ is not free in $\Gamma$ and there exists $k<n$ such that
$$\Gamma\vdash_k\phi\leqslant\psi.$$ Then, by induction $$\Gamma\vdash\phi[y/c]\leqslant\psi[y/c].$$
Therefore, since $y\neq x$, we have that $\Gamma\vdash\Sc[y/c]$.
\end{proof}

\begin{corollary}
Let $\Gamma$ be a set of closed conditions in which $c$ does not occur.
If $y$ does not occur in $\phi$ and $\Gamma\vdash\phi\leqslant r$ then
$$\Gamma\ \vdash\ \sup_y\phi[y/c]\leqslant r,\ \ \ \ \ \ \ \Gamma\ \vdash\ \int\phi[y/c]dy\leqslant r.$$
\end{corollary}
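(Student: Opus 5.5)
We get this from Lemma \ref{c1}, applied to the condition $\phi\leqslant r$, followed by one application of rule (R4) or (R5) in the form allowed by Definition \ref{proofdfn}.

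Since $c$ does not occur in $\Gamma$, $y$ does not occur in $\phi\leqslant r$, and $\Gamma$ consists of closed conditions (so $y$ is not free in $\Gamma$), the hypotheses of Lemma \ref{c1} hold for $\Sc=(\phi\leqslant r)$. Lemma \ref{c1} therefore gives
$$\Gamma\ \vdash\ \phi[y/c]\leqslant r,$$
because $r[y/c]=r$.

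Next we apply the quantifier clauses of Definition \ref{proofdfn}. The variable $y$ is not free in $\Gamma$, again because $\Gamma$ is closed. Hence we may pass from $\Gamma\vdash_k\phi[y/c]\leqslant r$ to
$$\Gamma\vdash\sup_y\phi[y/c]\leqslant\sup_y r \quad\text{and}\quad \Gamma\vdash\int\phi[y/c]\,dy\leqslant\int r\,dy.$$

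It remains to remove the quantifier on the constant side. For the integral, (A18) gives $\int r\,dy=r$, since $y$ is not free in $r$; alternatively, (A15) and (A17) give $\int r\,dy=r\int 1\,dy=r$. For the supremum, (A14) together with (A10) and (A11) shows $\sup_y r=r$: apply (A11) with $\phi=0$ and $\psi=r$, and use $\sup_y 0=0$, which follows from (A13) with scalar $0$ and (A9). Rule (R1) then chains each inequality with the corresponding equality to give $\sup_y\phi[y/c]\leqslant r$ and $\int\phi[y/c]\,dy\leqslant r$.

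The main obstacle is making sure the side conditions of Lemma \ref{c1} are exactly met. In particular, "$y$ does not occur in $\Sc$" must hold for $\phi\leqslant r$, and it does because $y$ does not occur in $\phi$. The rest is routine bookkeeping with the axioms.
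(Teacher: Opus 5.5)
Your proposal is correct and is essentially the paper's intended argument: the corollary is stated without proof right after Lemma~\ref{c1}, and the route is exactly yours — Lemma~\ref{c1} gives $\Gamma\vdash\phi[y/c]\leqslant r$, the quantifier clauses of Definition~\ref{proofdfn} apply because $\Gamma$ is closed, and the axioms reduce $\sup_y r$ and $\int r\,dy$ to $r$. The only step you left implicit is using the $\sup$-clause on $r=0+r$ (from (A4)) to pass from $\sup_y r$ to $\sup_y(0+r)$ before applying (A11); your mention of (A14) is superfluous.
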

\vspace{2mm}

By Zorn's lemma, every consistent theory is contained in a maximally consistent one.
Every maximally consistent theory $\Gamma$ is closed, i.e. $\Gamma\vdash\Sc$ implies that $\Sc\in\Gamma$.
Moreover, for every sentences $\phi,\psi$ either $\phi\leqslant\psi\in\Gamma$ or $\psi\leqslant\phi\in\Gamma$.

\begin{lemma}\label{equality} Let $\Gamma$ be a maximal consistent theory and $\phi$ be a sentence.
Then there is a unique $r$ such that $\phi=r\in \Gamma$.
\end{lemma}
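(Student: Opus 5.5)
We prove existence by a cut argument and uniqueness from consistency. Let $\Gamma$ be maximal consistent and $\phi$ a sentence, and set
$$A=\{s\in\Rn:\ s\leqslant\phi\in\Gamma\},\qquad B=\{s\in\Rn:\ \phi\leqslant s\in\Gamma\}.$$
We will show that $A$ and $B$ are nonempty and that $A\cup B=\Rn$. We will also show that $a\leqslant b$ for every $a\in A$ and $b\in B$. Then $r=\sup A=\inf B$ is the natural candidate, and we prove $\phi=r\in\Gamma$.

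First, nonemptiness. By the derivable bound $\vdash -{\sf b}_\phi\leqslant\phi\leqslant{\sf b}_\phi$ and closedness of $\Gamma$, we get $-{\sf b}_\phi\in A$ and ${\sf b}_\phi\in B$. Next, by the dichotomy for maximal consistent theories noted above (for sentences $\phi,\psi$ either $\phi\leqslant\psi$ or $\psi\leqslant\phi$ lies in $\Gamma$), applied to $\phi$ and the constant sentence $s$, every real lies in $A\cup B$.

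Second, the order. If $a\in A$, $b\in B$ and $b<a$, then by (R1) we get $\Gamma\vdash a\leqslant b$. Using (A1), (R2) and (R3), we rescale $a-b\leqslant 0$ with $a-b>0$ to obtain $\Gamma\vdash 1\leqslant 0$, contradicting consistency. Hence $\sup A=\inf B=r$ is a well-defined real.

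The main step is to show $r\leqslant\phi$ and $\phi\leqslant r$ belong to $\Gamma$, since the sup or inf need not be attained a priori. For $r\leqslant\phi$, we have $r-\frac1n\in A$ for each $n\geqslant1$. Thus $\Gamma,\,-\frac1n\leqslant\phi-r$ is consistent (a subset of $\Gamma$, after linear rearrangement). By Lemma \ref{consequence2}(iii), $\Gamma,0\leqslant\phi-r$ is consistent. By maximality, $0\leqslant\phi-r\in\Gamma$, hence $r\leqslant\phi\in\Gamma$ by closedness. Symmetrically, applying the same lemma to $r-\phi$ with $r+\frac1n\in B$ gives $\phi\leqslant r\in\Gamma$. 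The rearrangements between $s\leqslant\phi$ and $0\leqslant\phi-s$ are routine from the linearity axioms (A1)--(A9) and (R2).

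Uniqueness: if $\phi=r$ and $\phi=r'$ are both in $\Gamma$ with $r\neq r'$, then by (R1) both $r\leqslant r'$ and $r'\leqslant r$ are derivable. One of these is false in $\Rn$, and as above it yields $\Gamma\vdash1\leqslant0$, a contradiction.
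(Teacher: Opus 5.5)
Your proposal is correct and follows essentially the same route as the paper. The paper also takes $r=\sup\{r': r'\leqslant\phi\in\Gamma\}$ and $s=\inf\{s':\phi\leqslant s'\in\Gamma\}$, gets $r\leqslant s$ from consistency and $s\leqslant r+\epsilon$ from the dichotomy for maximal theories, and then puts $r\leqslant\phi$ and $\phi\leqslant r$ in $\Gamma$ by consistency plus maximality. Your write-up additionally makes explicit what the paper leaves implicit: nonemptiness via the bound ${\sf b}_\phi$, the appeal to Lemma~\ref{consequence2}(iii) for the consistency of $\Gamma, r\leqslant\phi$, and the uniqueness argument.
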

\begin{proof} Let
$$r=\sup{\{r': r'\leqslant\phi\in \Gamma}\},\hspace{12mm} s=\inf{\{s': \phi\leqslant s'\in \Gamma}\}.$$
For every $\epsilon>0$, both $r-\epsilon\leqslant\phi$ and $\phi\leqslant s+\epsilon$
belong to $\Gamma$. Therefore, $r\leqslant s$.
Also, for each $\epsilon>0$ we have that $(r+\epsilon)\leqslant\phi\notin\Gamma$.
So, $\phi\leqslant r+\epsilon\in \Gamma$ and hence $s\leqslant r+\epsilon$. We conclude that $r=s$.
Note that $r\leqslant\phi\in\Gamma$ since $\Gamma,r\leqslant\phi$ is consistent.
Similarly, $\phi\leqslant r\in \Gamma$.
\end{proof}

\begin{definition}
\em{An $L$-theory $\Gamma$ has \emph{the witness property} if for each $\phi(x)$ there is $c$ such that
$\sup_x\phi(x)\leqslant\phi(c)$ belongs to $\Gamma$. It has the \emph{mean value property}
if for each $\phi(x)$ there is $c$ such that $\int\phi(x)dx=\phi(c)$ belongs to $\Gamma$.}
\end{definition}

\begin{corollary}
Assume $\Gamma$ is consistent and $\phi(x)$ is a formula. Let $c$ be a new constant symbol. Then,

(i) $\Gamma,\int\phi(x)dx=\phi(c)$ is consistent

(ii) $\Gamma,\sup_x\phi(x)\leqslant\phi(c)$ is consistent.
\end{corollary}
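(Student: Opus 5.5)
The plan is to mirror, inside the proof system, the semantic arguments of Lemmas \ref{mean} and the witness lemma of Section \ref{section compactness}. We replace "affinely satisfiable" by "consistent". In place of affine compactness we use the deduction lemma (the lemma just before Lemma \ref{consequence2}), Lemma \ref{mean-proof0}, and Lemma \ref{c1} together with its Corollary. Throughout, the essential point is that $c$ does not occur in $\Gamma$, so anything proved about $c$ can be generalized to a variable $y$ and then quantified, by $\sup_y$ or by $\int\cdots dy$.

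For (ii), suppose $\Gamma,\sup_x\phi(x)\leqslant\phi(c)$ is inconsistent. Writing the added condition as $0\leqslant\theta$ with $\theta=\phi(c)-\sup_x\phi(x)$, the deduction lemma yields $r\geqslant0$ with $\Gamma\vdash r\theta+1\leqslant0$. Here $r\neq0$, since $\Gamma$ is consistent, so $\Gamma\vdash\phi(c)-\sup_x\phi\leqslant-\frac1r$. Since $c$ does not occur in $\Gamma$, the Corollary of Lemma \ref{c1} (after renaming the bound variable and using (A11) to pull out the sentence $\sup_x\phi$) gives
$$\Gamma\vdash\sup_y\phi(y)-\sup_x\phi(x)\leqslant-\tfrac1r.$$
By renaming bound variables, $\sup_y\phi(y)=\sup_x\phi(x)$ is provable, so $\Gamma\vdash0\leqslant-\frac1r$, whence $\Gamma\vdash1\leqslant0$. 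This is a contradiction.

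For (i), set $\psi(x)=\phi(x)-\int\phi\,dx$. By Lemma \ref{mean-proof0} it suffices to show that both $\Gamma,0\leqslant\psi(c)$ and $\Gamma,\psi(c)\leqslant0$ are consistent. Suppose $\Gamma,0\leqslant\psi(c)$ is inconsistent. As above, we get $r>0$ with $\Gamma\vdash\psi(c)\leqslant-\frac1r$. Generalizing $c$ to $y$ and integrating via the Corollary gives
$$\Gamma\vdash\int\Big(\phi(y)-\int\phi\,dx\Big)dy\leqslant-\tfrac1r.$$
By (A16), (A17) and (A18), together with bound-variable renaming, the left side is provably equal to $\int\phi\,dx-\int\phi\,dx=0$. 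Hence $\Gamma\vdash0\leqslant-\frac1r$, which is a contradiction. The other half is symmetric: inconsistency of $\Gamma,\psi(c)\leqslant0$ gives $\Gamma\vdash\frac1r\leqslant\psi(c)$, and we integrate in the same way.

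The main obstacle is the generalization step. The Corollary to Lemma \ref{c1} is stated for conditions of the form $\phi\leqslant r$ with $r$ a real number. Our condition contains the sentence $\sup_x\phi$ or $\int\phi\,dx$ as well as $c$, so we first move these sentences to the left to put it in that form, and then check that substituting $y$ for $c$ does not disturb them; it does not, because $c$ occurs only in the $\phi(c)$ term. We also need the routine fact that $\alpha$-renaming of bound variables is provably an equality, which we will take as part of the routine lemmas.
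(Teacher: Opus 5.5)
Your proposal is correct and is essentially the paper's proof. You assume one half is inconsistent, extract a strict bound $\Gamma\vdash\theta\leqslant-\epsilon$ via the deduction lemma (as in Lemma \ref{consequence2}(i)), replace the new constant by a fresh variable and integrate (resp.\ take $\sup$) via the corollary to Lemma \ref{c1} to reach $\Gamma\vdash\int\phi\,dx\leqslant\int\phi\,dx-\epsilon$ (resp.\ its $\sup$ analogue), and glue the two halves of (i) with Lemma \ref{mean-proof0}. You also spell out what the paper leaves as ``similar'', namely case (ii) and the use of (A11) and (A16)--(A18), and the bound-variable renaming you take as routine is used implicitly by the paper as well.
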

\begin{proof}
(i) We claim that $\Gamma,\phi(c)\leqslant\int\phi(x)dx$ is consistent.
Assume not. Then, for some $\epsilon>0$, $$\Gamma\ \vdash\ \int\phi(x)dx\leqslant\phi(c)-\epsilon.$$
Therefore, $$\hspace{10mm}\Gamma\ \vdash\ \int\phi(x)dx\leqslant\int\phi(x)dx-\epsilon.$$
This is a contradiction. It is similarly proved that $\Gamma,\int\phi(x)dx\leqslant\phi(c)$
is consistent. We conclude by \ref{mean-proof0} that
$\Gamma,\int\phi(x)dx=\phi(c)$ is consistent.

(ii) Similar.
\end{proof}

The following proposition is proved using chain arguments.

\begin{proposition} \label{witness}
Let $\Gamma$ be a consistent $L$-theory. Then, there are $\bar L\supseteq L$ and a maximal
consistent $\bar L$-theory $\bar{\Gamma}\supseteq \Gamma$ having the witness property and the mean value property.
\end{proposition}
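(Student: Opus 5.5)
The plan is a Henkin-style chain of length $\omega$, built from the preceding corollary, which adds mean-value witnesses and sup-witnesses one at a time while keeping consistency, followed by Zorn's lemma at the end.

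Set $L_0=L$ and $\Gamma_0=\Gamma$. Given a consistent $L_n$-theory $\Gamma_n$, let $L_{n+1}$ be $L_n$ together with two fresh constants $c_\phi$ and $d_\phi$ for every $L_n$-formula $\phi(x)$ in one free variable. Then set
$$\Gamma_{n+1}=\Gamma_n\cup\Big\{\int\phi(x)dx=\phi(c_\phi),\ \sup_x\phi(x)\leqslant\phi(d_\phi):\ \phi(x)\in L_n\Big\}.$$

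To show $\Gamma_{n+1}$ is consistent, use Lemma \ref{deduction1}(iii). If $\Gamma_{n+1}\vdash 1\leqslant0$, then some finite fragment is already inconsistent. That fragment involves only finitely many new constants $c_1,\dots,c_k$, each introduced by one new condition. Apply the corollary above $k$ times in succession. At each step the current theory is consistent and the next constant does not occur in it, since the new constants are pairwise distinct. So consistency is preserved through all $k$ additions, which gives a contradiction.

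One point needs care. The corollary is stated for a ``new constant symbol''. Its proof uses the fact that $\Gamma\vdash\int\phi dx\leqslant\phi(c)-\epsilon$ with $c$ absent from $\Gamma$ yields $\Gamma\vdash\int\phi dx\leqslant\int\phi dx-\epsilon$. This is the corollary to Lemma \ref{c1}, which requires $\Gamma$ to consist of closed conditions. I will therefore work throughout with sets of sentences, i.e. closed conditions, which is the setting of the statement. The new axioms are closed, so the inductive invariant survives.

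Let $\bar L=\bigcup_n L_n$ and $\Gamma_\omega=\bigcup_n\Gamma_n$. Then $\Gamma_\omega$ is consistent, because any proof of $1\leqslant0$ uses only finitely many premises (Lemma \ref{deduction1}(iii)), and these all lie in some $\Gamma_n$. By Zorn's lemma, extend $\Gamma_\omega$ to a maximal consistent $\bar L$-theory $\bar\Gamma$. Both properties then hold. Any $\bar L$-formula $\phi(x)$ contains only finitely many symbols, so it is an $L_n$-formula for some $n$. Hence the witness condition for $\phi$ and the mean-value condition for $\phi$ already belong to $\Gamma_{n+1}\subseteq\bar\Gamma$. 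Enlarging the theory cannot remove conditions, so both properties pass to $\bar\Gamma$, and maximality is not affected.

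The main obstacle I expect is the finite-step consistency argument. I need to check that adding a new constant does not interact badly with the proof system, specifically the side conditions ``$x$ not free in $\Gamma$'' in Definition \ref{proofdfn} and the variable renaming in Lemma \ref{c1}. Everything else is routine bookkeeping. I would handle that step by relying directly on Lemma \ref{c1} and its corollary rather than re-deriving anything from the proof system.
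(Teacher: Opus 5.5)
Your proposal is correct and is essentially the argument the paper has in mind when it says the proposition is ``proved using chain arguments'': you build an $\omega$-chain of languages and theories by adding fresh Henkin constants for the mean value and witness conditions, get consistency at each stage from the preceding corollary together with finiteness of proofs (Lemma \ref{deduction1}(iii)), and apply Zorn's lemma at the end. The one point worth stating explicitly is that consistency is taken relative to the enlarged language; this follows from the same constant-to-fresh-variable replacement used in Lemma \ref{c1}.
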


The following proposition is then proved similar to Proposition \ref{model construction}.

\begin{proposition}\label{completeness}
Let $T$ be a maximal consistent theory in $L$ with the witness and mean value properties.
Then $T$ has a model.
\end{proposition}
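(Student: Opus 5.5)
The plan is to build a term model exactly as in Proposition \ref{model construction}, but now working with consistency and the deductive closure of $T$ instead of affine satisfiability. By maximality, $T$ is closed under $\vdash$. By Lemma \ref{equality}, each sentence $\phi$ has a unique real $\phi^T$ with $\phi=\phi^T\in T$. Let $C$ be the set of constants of $L$ and put $\rho(c,e)=(\rho(c,e))^T$. Axioms (A19)--(A21), together with closure under (R1)--(R3), make this a pseudometric; let $M$ be the quotient and write $\hat c$ for the class of $c$. Interpret the symbols as follows:
\begin{itemize}
\item $c^M=\hat c$;
\item $R^M(\hat a_1,\dots,\hat a_n)=(R(\a))^T$;
\item $F^M(\hat a_1,\dots,\hat a_n)=\hat b$, where $b$ is a witness for $\inf_y\rho(F(\a),y)$.
\end{itemize}
For the function case, the witness property applied to $-\rho(F(\a),y)$ together with (A10) with $t=F(\a)$ gives $\rho(F(\a),b)=0\in T$. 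Well-definedness and the Lipschitz bounds come from (A22)--(A24) and the derived Lipschitz conditions $\vdash\phi(\x)-\phi(\y)\leqslant\lambda_\phi\rho(\x,\y)$.

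Next define the charge. For each formula $\phi(x)$, possibly with constants, let $\phi^T:M\to\Rn$ be $\hat c\mapsto(\phi(c))^T$. This map is bounded and Lipschitz by the derived axioms, so the collection $G$ of such maps is a vector subspace of $\mathbf C_b(M)$ containing the constants. It is majorizing because every $\phi^T$ is bounded by ${\sf b}_\phi$. Set $\Lambda(\phi^T)=(\int\phi(x)dx)^T$.
\begin{itemize}
\item \emph{Well-defined and linear:} this follows from (A15)--(A17) and the closure of $T$, provided $\Lambda$ is positive (so that $\phi^T=\psi^T$ forces equal integrals).
\item \emph{Positive:} if $\phi^T\geqslant0$ pointwise, the mean value property gives $c$ with $\int\phi\,dx=\phi(c)\in T$, hence $(\int\phi\,dx)^T=(\phi(c))^T\geqslant0$.
\end{itemize}
Corollary \ref{KR} then yields a regular Borel charge $\mu$ on $M$ with $\int\phi^T d\mu=(\int\phi\,dx)^T$.

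Then prove the truth lemma, $\phi^M(\hat a_1,\dots,\hat a_n)=(\phi(\a))^T$, by induction on $\phi$.
\begin{itemize}
\item \emph{Atomic and connective cases:} immediate from the definitions and the linearity axioms (A1)--(A9).
\item \emph{$\sup$ case:} (A10) gives $(\phi(c))^T\leqslant(\sup_x\phi)^T$ for all $c$. The witness property gives $c$ with $\sup_x\phi\leqslant\phi(c)\in T$, so the supremum over $M$ equals $(\sup_x\phi)^T$. The $\inf$ case follows via (A14).
\item \emph{Integral case:} by the induction hypothesis $\phi^M(\x,\cdot)=\phi^T$ on $M$, so $\int\phi^M d\mu=(\int\phi\,dx)^T$ by the choice of $\mu$.
\end{itemize}
Finally, Proposition \ref{exist1} passes to the completion and gives a genuine $L$-structure satisfying the same conditions, which is a model of $T$.

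I expect the main obstacle to be the interpretation of function symbols. It relies on the witness property being stated here in $\sup$ form, so it has to be turned into an existence statement for the witness $b$, and on showing that the resulting map is independent of representatives via (A22). Similar care is needed to confirm that the positivity argument only ever uses sentences, since the theory is closed.
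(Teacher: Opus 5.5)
Your proposal is correct and is essentially the paper's proof. The paper simply repeats the term-model construction of Proposition \ref{model construction}, with consistency in place of affine satisfiability: constants modulo $T$, the positive functional $\phi^T\mapsto(\int\phi\,dx)^T$ with positivity from the mean value property, Corollary \ref{KR}, the truth lemma, and Proposition \ref{exist1}. You also supply details the paper leaves implicit, such as the existence of $b$ with $\rho(F(\a),b)=0\in T$ and the well-definedness of $\Lambda$ via positivity; the only slip is that $G$ majorizes $\mathbf{C}_b(M)$ because it contains the constants, not because each $\phi^T$ is bounded.
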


\begin{theorem}{\em(Weak completeness)} Every consistent theory is satisfiable.
\end{theorem}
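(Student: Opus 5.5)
The plan is the standard Henkin route, using Proposition \ref{witness} and Proposition \ref{completeness}. Let $\Gamma$ be a consistent $L$-theory. By Proposition \ref{witness}, there is a language $\bar L\supseteq L$ obtained by adding new constant symbols, together with a maximal consistent $\bar L$-theory $\bar\Gamma\supseteq\Gamma$ that has both the witness property and the mean value property. Proposition \ref{completeness} then gives an $\bar L$-structure $N$ with $N\vDash\bar\Gamma$. Its $L$-reduct is obtained by forgetting the interpretations of the new constants and keeping the same metric and the same charge $\mu$. The value of an $L$-formula does not depend on the extra constants, including for the $\int$ clause, since $\mu$ is unchanged. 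So the reduct satisfies every condition of $\Gamma$.

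The real content sits in the two propositions, so I would check that they apply as stated.

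\textbf{Proposition \ref{witness}.} Enumerate the $\bar L$-formulas in one free variable along a transfinite chain. At each stage add a fresh constant $c$ and, alternately, the condition $\sup_x\phi(x)\leqslant\phi(c)$ or $\int\phi(x)dx=\phi(c)$. The preceding corollary keeps each of these one-step extensions consistent. At limit stages, consistency of the union follows from Lemma \ref{deduction1}(iii), since any proof uses only finitely many conditions. Repeat this $\omega$ times so that formulas mentioning the new constants also receive witnesses. Finally, extend to a maximal consistent theory by Zorn's lemma. Both properties survive enlargement, because they only require certain conditions to belong to the theory.

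\textbf{Proposition \ref{completeness}.} I would follow Proposition \ref{model construction} almost verbatim, replacing ``affinely satisfiable'' by ``consistent''. By Lemma \ref{equality}, each sentence $\phi$ receives a unique value $\phi^T$. Define $M$ on the constants modulo $\rho^T=0$. The axioms (A19)--(A24) make $\rho^T$ a pseudometric and make the interpretations well defined and Lipschitz. Define $\Lambda(\phi^T(x))=(\int\phi(x)dx)^T$.

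\begin{itemize}
\item Linearity of $\Lambda$ comes from (A16)--(A17).
\item $\Lambda(1)=1$ comes from (A15).
\item Positivity comes from the mean value property. If $\phi^T(\hat c)\geqslant0$ for all $c$, then $\int\phi\,dx=\phi(c_0)\geqslant0$ lies in $T$.
\item $\Lambda$ is well defined: if $\phi^T=\psi^T$ as functions, apply positivity to $\phi-\psi$ and to $\psi-\phi$.
\end{itemize}

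Corollary \ref{KR} then supplies $\mu$. The truth lemma $\phi^M(\hat a)=\phi(a)^T$ is proved by induction, and the $\sup$ case uses the witness property together with (A10).

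The main obstacle is the $\sup$/$\inf$ and $\int$ cases when $\phi$ has several free variables, with some instantiated by constants. Since every element of $M$ is some $\hat c$, I would reduce to one-variable formulas with parameters, which are themselves $\bar L$-formulas, and apply the witness and mean value properties to each of them. Proposition \ref{exist1} then completes $M$ to a genuine structure.
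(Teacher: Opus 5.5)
Your proposal is correct and follows the paper's route. Proposition \ref{witness} gives a maximal consistent extension with the witness and mean value properties. Proposition \ref{completeness}, proved like its affine-satisfiability counterpart with the mean value property giving positivity of $\Lambda$ and Corollary \ref{KR} supplying the charge, then yields a model whose $L$-reduct satisfies the original theory. Your treatment of the multi-variable $\sup$ and $\int$ cases, reading $\psi(\bar a,y)$ as a one-variable $\bar L$-formula with constant parameters, is exactly what the paper's one-variable sketch implicitly relies on.
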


\begin{theorem} {\em (Approximate strong completeness)}
$\Gamma\vDash0\leqslant\phi$ if and only if $\Gamma\vdash-\frac{1}{k}\leqslant\phi$ for all $k\geqslant1$.
\end{theorem}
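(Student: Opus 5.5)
The right-to-left direction is soundness. Suppose $\Gamma\vdash-\frac1k\leqslant\phi$ for every $k\geqslant1$. By Theorem \ref{soundness}, every model $M$ of $\Gamma$ satisfies $-\frac1k\leqslant\phi^M$ for all $k$. Hence $0\leqslant\phi^M$, which is exactly $\Gamma\vDash0\leqslant\phi$.

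For the left-to-right direction I will argue by contraposition, using weak completeness together with Lemma \ref{consequence2}. Assume that for some $k\geqslant1$ we have $\Gamma\nvdash-\frac1k\leqslant\phi$; we may take $\phi$ to be a sentence. Put $\theta=-\phi-\frac1k$, so the unprovable condition is $\theta\leqslant0$ up to the linearity axioms (A1)--(A9) and rules (R2), (R3). Then Lemma \ref{consequence2}(ii) shows that $\Gamma,0\leqslant\theta$ is consistent. This condition is provably equivalent to $\phi\leqslant-\frac1k$. By the weak completeness theorem, $\Gamma,\phi\leqslant-\frac1k$ has a model $M$. This $M$ is a model of $\Gamma$ with $\phi^M\leqslant-\frac1k<0$, so $\Gamma\nvDash0\leqslant\phi$.

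The only step needing care is moving between syntactically different but provably equivalent conditions. We must check that $\Gamma\vdash-\frac1k\leqslant\phi$ iff $\Gamma\vdash\theta\leqslant0$, and that consistency of $\Gamma,0\leqslant\theta$ transfers to $\Gamma,\phi\leqslant-\frac1k$. Both follow from (R2), (R3) with $r=-1$ handled via (A1), (A5)--(A9), and Lemma \ref{deduction1}(ii). Weak completeness rests on Propositions \ref{witness} and \ref{completeness}, and I am assuming it as given. With those in hand, the argument is routine.

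Note that one cannot get $\Gamma\vdash0\leqslant\phi$ itself. By Lemma \ref{consequence2}(iii), failure of this only yields consistency of $\Gamma,\phi\leqslant-\frac1n$ for some $n$ when a bound is already unprovable. This is why the statement is only approximate.
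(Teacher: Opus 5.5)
Your proof is correct and is essentially the paper's argument: soundness gives the right-to-left direction, and for the converse, unprovability of $-\frac1k\leqslant\phi$ yields, via Lemma \ref{consequence2}(ii), consistency of $\Gamma,\phi\leqslant-\frac1k$, and the model supplied by weak completeness contradicts $\Gamma\vDash0\leqslant\phi$. Your closing remark is garbled but harmless: the real reason this method gives only the approximate version is that $\Gamma\nvdash0\leqslant\phi$ merely yields a model of $\Gamma,\phi\leqslant0$, i.e.\ one with $\phi^M\leqslant0$, which does not contradict $\Gamma\vDash0\leqslant\phi$.
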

\begin{proof}
Suppose that $\Gamma\nvdash-\frac{1}{k}\leqslant\phi$ for some $k\geqslant1$.
Then, $\Gamma,\phi\leqslant-\frac{1}{k}$ is consistent. Let $M$ be a model of this theory.
Then, we must have that $0\leqslant\phi^M\leqslant-\frac{1}{k}$.
\end{proof}

The proof system given above is not sufficiently strong to obtain an exact form of strong completeness theorem,
i.e. to deduce $\Gamma\vdash\Sc$ from $\Gamma\vDash\Sc$. To obtain such a result, we need
an infinitary proof system, i.e. $\vdash_\alpha$ for every $\alpha<\omega_1$.

\section{Types} \label{Types}
Let $T$ be a (affinely) complete $L$-theory. For $|\x|=n\geqslant1$, \ $D_{n}(T)$ denotes the vector space
of formulas with free variables $\x$ modulo $T$-equivalence.
This is a partially ordered normed vector space by
$$\|\phi\|=\sup_{\a\in M}|\phi^M(\a)|$$
$$0\leqslant\phi\ \ \ \ \mbox{if} \ \ \ \ T\vDash0\leqslant\phi(\x).$$
A \emph{$n$-type} is a positive linear map $p: D_n(T)\rightarrow\Rn$ such that $p(1)=1$.
Note that $p$ extends uniquely to a positive linear map on the completion of $D_n(T)$.
The set of $n$-types of $T$ is denoted by $K_n(T)$.
Types over parameters from $A\subseteq M\vDash T$ are defined similarly. $K_n(A)$ denotes the set of $n$-types over $A$.
By the Banach-Alaoglu theorem, $K_n(T)$ is a compact convex subset of the unit ball of
$( D_n(T))^*$ (similarly for $K_n(A)$).
The logic topology of $K_n(T)$ is generated by sets of the form $\{p: 0<p(\phi)\}$ where $\phi(\x)$ is a formula.
A type $p(\x)$ is realized by $\a\in M$ if $p(\phi)=\phi^M(\a)$ for every $\phi$.
Integration is itself a type on $M$.


\begin{lemma}
\label{realize}Let $A\subseteq M$ and $p(\x)$ be a type over $A\subseteq M$. Then $p$, is realized in some $M\preccurlyeq N$.
\end{lemma}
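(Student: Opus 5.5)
We realize $p$ by the standard compactness argument: form the elementary diagram of $M$ together with conditions forcing new constants $\bar c$ to have type $p$, check that this theory is affinely satisfiable, and apply Theorem \ref{affine compactness}. Concretely, expand $L$ to $L(M)$ by constants for all elements of $M$ and add new constants $\bar c=(c_1,\dots,c_n)$. Let $\Sigma$ consist of the conditions $\phi(\a)=\phi^M(\a)$ for every $L(M)$-sentence $\phi(\a)$ (the elementary diagram $\mathrm{ediag}(M)$), together with $\psi(\bar c)=p(\psi)$ for every $L(A)$-formula $\psi(\x)$. If $N\vDash\Sigma$, then $a\mapsto a^N$ is an elementary embedding of $M$ into $N$, and after identifying $M$ with its image we get $M\preccurlyeq N$ with $\bar c^N$ realizing $p$. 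By Proposition \ref{exist1} we may take $N$ complete.

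Affine satisfiability of $\Sigma$ reduces to a single condition. A nonnegative combination of finitely many conditions of $\Sigma$ can be rewritten as one condition
$$\theta(\a)+\psi(\a',\bar c)\ \leqslant\ \theta^M(\a)+p(\psi(\a',\x)),$$
where $\theta$ is an $L(M)$-sentence, $\psi$ is a formula with parameters $\a'$ in $A$, and we use linearity of $p$. The equalities contribute both signs, so we may also get the reverse inequality, but the argument is symmetric. It therefore suffices to find a model in which $\theta$ takes its $M$-value and $\psi(\bar c)$ does not exceed $p(\psi)$. Take $M$ itself and interpret $\bar c$ by some $\b\in M^n$. We then need $\b$ with $\psi^M(\b)\leqslant p(\psi)$ (or $\geqslant$, in the reverse case). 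Since $p$ is positive and $p(1)=1$, we have
$$\inf_{\x}\psi^M\ \leqslant\ p(\psi)\ \leqslant\ \sup_{\x}\psi^M .$$
Indeed, $T_A\vDash \inf_{\x}\psi\leqslant\psi$, and the positivity of $p$ applies to the sentence $\inf_{\x}\psi$ with parameters in $A$, whose value is fixed by the diagram.

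The main obstacle is that the infimum need not be attained, so the required inequality may only hold up to $\epsilon$. To handle this, satisfy the weakened condition with a slack of $\epsilon$ for each $\epsilon>0$ (an approximate witness $\b\in M$ exists). Then pass to an ultramean, or use the remark after the affine compactness theorem that satisfiability with every $\epsilon$ of slack yields satisfiability. Alternatively, replace each condition of $\Sigma$ by $-\frac1k\leqslant\dots$; every finite affine combination of these weakened conditions is satisfiable in $M$. By Theorem \ref{affine compactness}, the full weakened family, over all $k$, has a model, and that model satisfies $\Sigma$ exactly. Note that the integral quantifier plays no special role here, since it is handled through the elementary diagram by the same compactness.
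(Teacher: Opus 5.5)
The paper states this lemma without proof, and your proposal is the standard affine-compactness argument it implicitly relies on: the elementary diagram together with $\psi(\bar c)=p(\psi)$, the reduction of a nonnegative combination to $\psi(\bar c)\leqslant p(\psi)$, and the bound $\inf_{\x}\psi^M\leqslant p(\psi)$ coming from positivity of $p$ and $p(1)=1$; the argument is correct. Of your three ways of absorbing the $\epsilon$-slack, only the $-\frac{1}{k}$-weakened family is rigorous as written. The remark after the compactness theorem that you allude to says something different, namely that $T\vDash 0\leqslant\phi$ yields a finite $\Delta$ with $\Delta\vDash -\epsilon\leqslant\phi$, so keep the $-\frac{1}{k}$ version.
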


Using Lemma \ref{realize} and Proposition \ref{downward2}, one can prove many results concerning saturated and homogeneous models.
We just mention some examples whose proofs are similar to the classical case (see also \cite{ACMT}).

\begin{definition}
\emph{Let $\kappa$ be an infinite cardinal and $M\vDash T$.\\
\emph{(i)} $M$ is $\kappa$-\emph{saturated}\index{saturated} if for each $A\subseteq M$
with $|A|<\kappa$, every type $p\in K_1(A)$ (and hence every $p\in K_n(A)$) is realized in $M$.\\
\emph{(ii)} $M$ is $\kappa$-\emph{homogeneous}\index{homogeneous} if for every $\a,\b,c\in M$
with $|\a|=|\b|<\kappa$ and $\a\equiv\b$, there exists $e\in M$ such that $\a c\equiv\b e$.
It is \emph{strongly $\kappa$-homogeneous}\index{strongly homogeneous} if for $\a,\b$ as above there is an automorphism $f$ of $M$ such that $f(\a)=\b$.\\
\emph{(iii)} $M$ is $\kappa$-\emph{universal} if every $N\equiv M$ with $\mathsf{dc}(N)<\kappa$ is elementarily embedded in $M$.}
\end{definition}

\begin{proposition}
If $M\equiv N$ are $\kappa$-saturated where $\kappa=\mathsf{dc}(M)=\mathsf{dc}(N)$, then $M\simeq N$.
\end{proposition}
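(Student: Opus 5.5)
The proof will be a back-and-forth construction, as in the classical case. Because the structures are metric, we work with dense subsets and at the end extend to the completions. Fix dense subsets $\{a_\alpha:\alpha<\kappa\}\subseteq M$ and $\{b_\alpha:\alpha<\kappa\}\subseteq N$, which exist since $\kappa=\mathsf{dc}(M)=\mathsf{dc}(N)$.

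By transfinite recursion on $\alpha<\kappa$ we will build increasing sequences of partial maps $f_\alpha$. Each $f_\alpha$ has domain $A_\alpha\subseteq M$ with $|A_\alpha|<\kappa$ and is \emph{elementary}: for every formula $\phi(\x)$ and $\a\in A_\alpha$ we have $\phi^M(\a)=\phi^N(f_\alpha\a)$. Here ``formula'' includes integral formulas, so this is equality of AL$_{\int}$-types. We start with $f_0=\emptyset$, which is elementary because $M\equiv N$. At limit stages we take unions; elementarity is preserved since each formula mentions only finitely many parameters.

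At successor stages we alternate forth and back steps.
\begin{itemize}
\item Forth step: to add $a_\alpha$ to the domain, consider $q(y)=\{\phi(\a,y)\mapsto\phi^M(\a,a_\alpha)\}$ with $\a\in A_\alpha$. Transport it to $p$ over $f_\alpha(A_\alpha)$ by $p(\phi(f_\alpha\a,y))=q(\phi(\a,y))$. This is well defined, positive and linear as a functional on $D_1(f_\alpha A_\alpha)$. Indeed, if $N\vDash 0\leqslant\phi(f_\alpha\a,y)$ for all $y$, then $0\leqslant\inf_y\phi(f_\alpha\a,y)$ holds in $N$, hence $0\leqslant\inf_y\phi(\a,y)$ holds in $M$ by elementarity. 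The same argument gives well-definedness modulo equivalence, and $p(1)=1$. Since $N$ is $\kappa$-saturated and $|f_\alpha A_\alpha|<\kappa$, some $b\in N$ realizes $p$, and we set $f_{\alpha+1}=f_\alpha\cup\{(a_\alpha,b)\}$.
\item Back step: symmetric, using the saturation of $M$.
\end{itemize}

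After $\kappa$ steps, $f=\bigcup_\alpha f_\alpha$ is elementary between dense subsets $D\subseteq M$ and $D'\subseteq N$. Applying it to the formula $\rho(x,y)$ shows $f$ is isometric. It therefore extends uniquely to an isometry $\bar f:M\to N$, surjective since $D'$ is dense and $M$ is complete. Because every formula $\phi^M$ is $\lambda_\phi$-Lipschitz, the equality $\phi^M(\a)=\phi^N(\bar f\a)$ passes from $D$ to all of $M$ by continuity. So $\bar f$ is a surjective elementary embedding, i.e. an isomorphism in the paper's sense. The charges need not correspond under $\bar f$; we only need integrals of formulas to agree, and they do because integral formulas are among the preserved formulas.

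The main obstacle is the forth step. We must check that the transported map really is a type over $f_\alpha A_\alpha$ in the sense of Section \ref{Types}, namely positive with respect to the order given by the elementary diagram. The key point is that positivity of a formula with parameters is witnessed by a single sentence with parameters ($\inf_y\phi$), and elementarity preserves that sentence. A second point needing care is that $\kappa$-saturation only realizes types over parameter sets of size $<\kappa$. This forces the enumeration length $\kappa$ and the requirement $|A_\alpha|<\kappa$; for $\kappa$ regular this is fine. If $\kappa$ is singular, I would first verify that saturation over sets of size $<\kappa$ suffices at each stage, since each $A_\alpha$ has size $|\alpha|+\aleph_0<\kappa$ (taking $\kappa$ uncountable; the countable case is the usual $\aleph_0$ back-and-forth over finite sets).
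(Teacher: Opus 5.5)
Your proposal is correct. It is the standard back-and-forth over dense subsets of size $\kappa$, transporting types along the partial elementary maps and then extending by continuity to the completions, which is what the paper means when it says the proof is ``similar to the classical case'' (the paper gives no further detail). Your regularity caveat is unnecessary, as you essentially observe yourself: for uncountable $\kappa$ one has $|\alpha|+\aleph_0<\kappa$ for every $\alpha<\kappa$, so each $A_\alpha$ is small enough for $\kappa$-saturation to apply.
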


\begin{proposition}
$M$ is $\kappa$-saturated if and only if it is $\kappa$-homogeneous and $\kappa^+$-universal.
\end{proposition}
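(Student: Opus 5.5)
The plan is to follow the classical argument, as the paper suggests for these results (cf. \cite{ACMT}). The only place where the integral enters is through the notion of type: types are positive linear functionals on $D_n$, which already include integral formulas. Realization of types is supplied by Lemma \ref{realize}, and size control by Proposition \ref{downward2}. So the model-theoretic back-and-forth arguments should transfer verbatim.

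\emph{($\Rightarrow$)} Homogeneity is immediate. Suppose $\a\equiv\b$ with $|\a|<\kappa$ and $c\in M$. Let $p(y,\b)$ be the image of $\mathrm{tp}(c/\a)$ under the substitution $\x\mapsto\b$. This defines a positive linear functional on formulas over $\b$: positivity is preserved because $\a\equiv\b$ means the same conditions with parameters hold. Hence $p\in K_1(\b)$. By $\kappa$-saturation it is realized by some $e$, and then $\a c\equiv\b e$.

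For $\kappa^+$-universality, let $N\equiv M$ with $\mathsf{dc}(N)\leqslant\kappa$, and fix a dense set $\{d_\alpha:\alpha<\kappa\}$ of $N$. We build, by induction on $\alpha$, an assignment $d_\alpha\mapsto e_\alpha\in M$ such that $(d_\beta)_{\beta<\alpha}\equiv(e_\beta)_{\beta<\alpha}$. At stage $\alpha$ the set $\{e_\beta\}_{\beta<\alpha}$ has size $<\kappa$. The transferred type $\mathrm{tp}(d_\alpha/(d_\beta)_{\beta<\alpha})$, expressed over $(e_\beta)_{\beta<\alpha}$, is a type by the same positivity argument, so it is realized in $M$. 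At limit stages nothing needs to be done, since types concern finitely many variables. The resulting map is elementary on a dense set. Since formulas are Lipschitz, it extends uniquely to an isometric elementary embedding $N\to M$ ($M$ is complete).

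\emph{($\Leftarrow$)} Let $A\subseteq M$ with $|A|<\kappa$ and $p\in K_1(A)$. By Lemma \ref{realize}, realize $p$ by some $b$ in some $M\preccurlyeq N$. By Proposition \ref{downward2}, take $A\cup\{b\}\subseteq N_0\preccurlyeq N$ with $\mathsf{dc}(N_0)\leqslant|A|+|L|+\aleph_0$. Then $\mathsf{dc}(N_0)\leqslant\kappa$ provided $\kappa\geqslant|L|+\aleph_0$, which is the standing convention. Since $N_0\equiv M$, $\kappa^+$-universality gives an elementary $g:N_0\to M$. Now $g(A)\equiv A$ inside $M$ (as enumerated tuples), with $|A|<\kappa$. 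Apply $\kappa$-homogeneity to the tuples $g(A)$ and $A$ and the element $g(b)$ to get $e\in M$ with $g(A)g(b)\equiv Ae$. Then $e$ realizes $p$, because $\mathrm{tp}(g(b)/g(A))$ is the transfer of $\mathrm{tp}(b/A)=p$.

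\textbf{Main obstacle.} The step I expect to need the most care is verifying that "transferring" a type along a partial elementary map yields a genuine element of $K_1$ over the new parameters. This requires that the quotient space $D_1(\b)$ and its order be determined by $\mathrm{tp}(\b)$ alone. Here the integral quantifier matters. The point is that $T_{\b}\vDash 0\leqslant\phi(\b,y)$ if and only if $\inf_y\phi(\b,y)\geqslant0$ holds in $M$, and this is a formula value determined by $\mathrm{tp}(\b)$. So equivalence and positivity do transfer, including for formulas containing $\int$.

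A secondary point is homogeneity for $\kappa$ uncountable, where tuples have infinite length. There I would interpret $\a\equiv\b$ via finite subtuples, as in the paper's definition, and the argument above is unaffected.
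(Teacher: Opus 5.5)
Your proposal is correct and follows the classical back-and-forth argument that the paper defers to. The paper gives no proof of its own, only pointing to the classical case and \cite{ACMT}; you use Lemma \ref{realize} and Proposition \ref{downward2} as intended, and you handle the transfer of types correctly via $\inf_y\phi$. One remark: the hypothesis $\kappa\geqslant|L|+\aleph_0$ you invoke in ($\Leftarrow$) is not stated in the paper, but it is genuinely needed: a structure consisting of $\aleph_1$ named points at mutual distance $1$ is $\aleph_0$-homogeneous and vacuously $\aleph_1$-universal, yet it is not $\aleph_0$-saturated.
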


\begin{proposition} \label{homogeneous2}
Assume $M\vDash T$ and $\mathsf{dc}(M)\leqslant2^{\kappa}$.
Then, there is a $\kappa$-saturated strongly $\kappa$-homogeneous $M\preccurlyeq N$ such that $|N|\leqslant2^{\kappa}$.
\end{proposition}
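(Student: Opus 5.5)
We will follow the classical construction of saturated, strongly homogeneous models, as in \cite{ACMT}. We build an elementary chain $(M_\alpha)_{\alpha\le\kappa^+}$ of length $\kappa^+$ with $M_0=M$ and $\mathsf{dc}(M_\alpha)\leqslant 2^\kappa$ for all $\alpha$. Successor steps will handle both realizing types and extending partial elementary maps. At limit stages we take the completion of the union, using Proposition \ref{chain2}. Note that the union of $<\kappa^+$ structures of density $\le 2^\kappa$ again has density $\le 2^\kappa$, since $\kappa^+\le 2^\kappa$.

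\textbf{Successor step.} Fix a dense subset $D_\alpha\subseteq M_\alpha$ of size $\le 2^\kappa$. The number of pairs $(A,p)$ with $A\subseteq D_\alpha$, $|A|\le\kappa$, $p\in K_1(A)$ is at most $(2^\kappa)^\kappa\cdot 2^{\kappa}=2^\kappa$. This count holds because $D_1(A)$ has a dense subset of size $\le\kappa+|L|$ (rational formulas), so a type is determined by $\le\kappa$ reals.

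We realize all these types one at a time along an elementary chain of length $2^\kappa$. Each single type is realized in an elementary extension by Lemma \ref{realize}. Proposition \ref{downward2} then lets us cut back to density $\le 2^\kappa$ while keeping the realizing point and the previous model. Limits are again handled by Proposition \ref{chain2}.

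Types over arbitrary $A\subseteq M_\kappa$ with $|A|<\kappa$ (or $\le\kappa$) are handled as follows. Approximate $A$ by countably many points from the dense sets. Because formulas are Lipschitz, a type over $A$ is determined by, and realized via, a type over a set $A'\subseteq\bigcup D_\alpha$ with $|A'|\le\kappa$. Any subset of size $\le\kappa$ of $M_{\kappa^+}$ (with the $D_\alpha$ increasing) lies in some $M_\alpha$, since $\mathrm{cf}(\kappa^+)>\kappa$. The passage from approximate to exact realization is standard: realize the type over the countable approximating set, and use completeness of the model.

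\textbf{Strong homogeneity.} A back-and-forth argument needs automorphisms, which saturation alone gives only when the density is small. So at each successor stage we also interleave the following: for each pair $\a\equiv\b$ of length $<\kappa$ from $D_\alpha$, we add realizations so that the partial map $\a\mapsto\b$ extends one step. After $\kappa^+$ steps, a back-and-forth of length $2^\kappa$ then builds an automorphism. The cleaner alternative, which I would actually use, is to take a special model: choose the chain of length $\mathrm{cf}(2^\kappa)$-sensitive special-model style. Since the statement allows $|N|\le 2^\kappa$ rather than density $\kappa$, I expect the intended argument to be the standard one for strong homogeneity via a chain of length $2^\kappa$ in which each $M_{\alpha+1}$ is $|M_\alpha|^+$-saturated relative to small sets.

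\textbf{Main obstacle.} The hardest step is the cardinality claim $|N|\le 2^\kappa$: the statement asks for cardinality, not density. A complete metric space of density $2^\kappa$ may have cardinality $(2^\kappa)^{\aleph_0}=2^\kappa$, which is fine. I would therefore track density throughout and convert to cardinality at the end using this identity.

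The integral quantifier adds nothing new here. Only Lemma \ref{realize}, Proposition \ref{downward2} and Proposition \ref{chain2} use the charges, and they are already established.
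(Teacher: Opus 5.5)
Your saturation half matches the classical argument that the paper defers to, and it is essentially right. Two small points there. First, your count of types needs $|L|\leqslant\kappa$, which the statement leaves implicit. Second, the approximation through the sets $D_\alpha$ is unnecessary: since $\mathrm{cf}(\kappa^+)>\omega$, $\bigcup_{\alpha<\kappa^+}M_\alpha$ is already complete, so every $A\subseteq N$ with $|A|\leqslant\kappa$ lies in some $M_\alpha$.

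The genuine gap is strong $\kappa$-homogeneity. None of the three routes you sketch works.
\begin{itemize}
\item Interleaving one-step extensions of small maps $\bar a\mapsto\bar b$ only produces $\kappa$-homogeneity in the paper's sense, and $\kappa$-saturation already gives that.
\item A back-and-forth of length $2^\kappa$ along a dense subset of $N$ must, at stage $\beta$, extend a partial elementary map whose domain has size $|\beta|$, possibly larger than $\kappa$. So it needs $N$ to be saturated in its own density $2^\kappa$, which holds only in special cases such as $2^\kappa=\kappa^+$.
\item Special models of cardinality $2^\kappa$ are built under the hypothesis $2^\mu\leqslant 2^\kappa$ for all $\mu<2^\kappa$, which ZFC does not prove.
\item Requiring $M_{\alpha+1}$ to be $|M_\alpha|^+$-saturated cannot be reconciled with $|M_{\alpha+1}|\leqslant 2^\kappa$ once $|M_\alpha|=2^\kappa$.
\end{itemize}

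The missing idea is to carry the automorphisms along the chain rather than try to build them at the end.
\begin{itemize}
\item \emph{Bookkeeping.} For every partial elementary map $f$ of size $<\kappa$ between subsets of some $M_\alpha$, maintain automorphisms $\sigma^f_\beta$ of $M_\beta$ for all $\beta>\alpha$, increasing in $\beta$. There are at most $(2^\kappa)^{\kappa}=2^\kappa$ such $f$ in total.
\item \emph{Successor stage.} First do your saturation step to get $M_\alpha'\succcurlyeq M_\alpha$. Every $\sigma^f_\alpha$, and every new $f$, is then a partial elementary map of $M_\alpha'$. Build a chain $M_\alpha'=P_0\preccurlyeq P_1\preccurlyeq\cdots$ in which, simultaneously for all these maps, the current map (for even $n$) or its inverse (for odd $n$) is extended to an elementary embedding of $P_n$ into $P_{n+1}$.
\item \emph{Why each step exists.} Apply affine compactness to the union, over all maps, of the transported elementary diagrams of $P_n$ written with disjoint sets of new constants; each of these is a type over $P_n$. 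Then use Proposition~\ref{downward2} to keep density $\leqslant 2^\kappa$.
\item \emph{Result of the stage.} On $M_{\alpha+1}=\overline{\bigcup_nP_n}$, the unions of these back-and-forth extensions are automorphisms extending the given maps.
\item \emph{Limit stages.} Take unions of the $\sigma^f_\beta$ and extend them by continuity to the completion.
\item \emph{Conclusion.} Any partial elementary map of $N$ of size $<\kappa$ lies in some $M_\alpha$, because $\mathrm{cf}(\kappa^+)>\kappa$. Then $\bigcup_\beta\sigma^f_\beta$ is an automorphism of $N$ extending it.
\end{itemize}
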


\begin{proposition}\label{homogeneous}
Let $M,N\vDash T$ be separable and $\aleph_0$-homogeneous.
Then $M\simeq N$ if and only if they realize the same types in every $K_n(T)$.
\end{proposition}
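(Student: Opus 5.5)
The forward direction is immediate: if $f:M\rightarrow N$ is an isomorphism, it is a surjective elementary embedding. Hence $\a\in M$ and $f(\a)\in N$ realize the same type in every $K_n(T)$, and every $\b\in N$ is of this form.

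For the converse I will run the usual back-and-forth argument, adapted to the metric setting. Fix countable dense sets $\{a_k\}\subseteq M$ and $\{b_k\}\subseteq N$. The first step is to turn $\aleph_0$-homogeneity into a usable extension property. Suppose $\a\in M$ and $\b\in N$ are finite tuples with the same type. Since $M$ and $N$ realize the same types, the type of $\a c$ (for any $c\in M$) is realized in $N$ by some $\b'e'$. Then $\b'\equiv\a\equiv\b$, so by $\aleph_0$-homogeneity of $N$ there is $e\in N$ with $\b'e'\equiv\b e$, and hence $\a c\equiv\b e$. The same argument works with $M$ and $N$ interchanged. Types here are positive linear functionals on the formula spaces $D_n(T)$, which now include integral formulas. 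Still, "$\a\equiv\b$" means exactly that $\phi^M(\a)=\phi^N(\b)$ for all formulas $\phi$, so this step is purely formal.

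Next, I alternate the extension property along an enumeration. I build finite tuples $\bar c_k\in M$ and $\bar d_k\in N$ with $\bar c_k\equiv\bar d_k$, each extending the previous one. At even stages I add $a_k$ on the left and find a matching element on the right; at odd stages I add $b_k$ on the right and find a matching element on the left. The map $c\mapsto d$ on the union is then well defined and distance preserving, because $\rho$ is a formula and so $\rho^M(c,c')=\rho^N(d,d')$. It preserves every formula on finite tuples, and its domain and range contain the dense sets. It extends uniquely to an isometry $f:M\rightarrow N$ by completeness. Each formula is Lipschitz in both structures, so $\phi^M(\a)=\phi^N(f(\a))$ holds by continuity for all $\a$. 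Finally, $f$ is onto because its range is complete and dense. So $f$ is a surjective elementary embedding, which is an isomorphism in the sense defined earlier.

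The step I expect to need the most care is the realization claim inside the extension property. The hypothesis must be read as: every type realized in $M$ is realized in $N$ and conversely, for every arity $n$, and it is applied to $(n{+}1)$-tuples. A second point is that isomorphism here does not require measure preservation, so I do not need to construct any map between the charges. This is exactly why the back-and-forth suffices: the integral formulas are already accounted for in "same type." With this definition there is no real obstacle.
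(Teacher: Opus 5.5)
Your proposal is correct and matches the intended proof. The paper gives no separate argument, saying only that such results are proved as in the classical case (cf.\ \cite{ACMT}). That classical argument is exactly your back-and-forth: combine realization of $(n{+}1)$-types with $\aleph_0$-homogeneity to get the one-step extension property, then extend the distance-preserving map on the countable dense union to a surjective elementary embedding, using completeness and the Lipschitz continuity of formulas.
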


Let $M\vDash T$ be $\aleph_0$-saturated. Then $$\varrho(p,q)=\inf\{\rho(\a,\b):\ \a\vDash p,\ \b\vDash q\}$$
defines a metric on $K_n(T)$. As in CL, it is proved that $(K_n(T),\tau,\varrho)$ is a topometric space.
We can also define a probability measure on $M^n$ and then transfer it to $K_n(T)$.
For this purpose, let $\x=x_1\cdots x_n$ and define a map $\Lambda$ by setting
$$\phi(\x,\a)\mapsto\int\cdots\int\phi^M(\x,\a)\ dx_1\cdots dx_n$$
where $\phi(\x,\z)$ is any $L$-formula and $\a\in M$.
By Corollary \ref{KR}, there is a regular Borel charge $\mu_n$ on $M^n$ such that for every $\phi(\x,\a)$,
$$\Lambda(\phi)=\int\phi^M\ d\mu_n.$$
$T$ is a \emph{Fubini theory} if for every $\phi(x,y,\z)$ one has that $$\iint\phi dxdy\equiv_T\iint\phi dydx.$$
For example, if $M$ is compact, its affine theory is Fubini.
If $T$ is Fubini, $\mu_n$ defined above is independent of the order of variables.
We can now define a Borel charge on $K_n(T)$. Assume further that $M$ be $\aleph_0$-saturated.
Then the map $$tp:M^n\rightarrow K_n(T),\ \ \ \ \ \a\mapsto tp(\a)$$
is surjective and logic-continuous (as well as $1$-Lipschitz with respect to the metric topology of $K_n(T)$).
For every formula $\phi(\x)$, let $\hat{\phi}:K_n(T)\rightarrow\Rn$ be defined by $\hat\phi(p)=p(\phi)$.
Also define $$\bar\Lambda(\hat{\phi})=\Lambda(\phi)=\int\phi\ d\mu_n.$$
Again, by Corollary \ref{KR}, there is a Borel charge $\hat\mu_n$ on $K_n(T)$
such that for every $\phi$ $$\bar\Lambda(\phi)=\int\phi\ d\hat\mu_n.$$
Briefly, for every $M\vDash T$, we have Borel charges $\mu_n$ on $M^n$ and (if it is $\aleph_0$-saturated)
a Borel charge $\hat\mu_n$ on $K_n(T)$ such that for every formula $\phi(x_1,...,x_n)$
$$\int\phi^M\ d\mu_n=\int\cdots\int\phi^M\ dx_1\cdots dx_n=\int\hat\phi\ d\hat\mu_n.$$
Note that $\hat\mu_n$ does not depend on the choice of $M$.
Finally, since $K_n(T)$ is compact with respect to the logic topology, $\mu_n$ extends uniquely to a
$\sigma$-additive probability measure on $K_n(T)$.

We can also embed $D_n(K)$ in $D_{n+1}(T)$ in a natural way. For each $n$, the integration operator defines a positive linear map
$$D_{n+1}(T)\rightarrow D_n(T),\ \ \ \ \ \ \ \ \ \phi(\x,y)\mapsto\int\phi(\x,y)dy.$$
This induces an affine map $+:K_{n}(T)\rightarrow K_{n+1}(T)$ by
$$p^+(\phi(\x,y))=p(\int\phi(\x,y)dy).$$
This is an injective logic continuous map. Recall again that by the affine compactness, if
$\inf_x\phi^M(x)\leqslant r\leqslant\sup_x\phi^M(x)$, then $\phi(x)=r$ is satisfiable in
an elementary extension of $M$. This is the mean value property for $\phi(x)$.

\begin{lemma}
Every map $p\mapsto p^+$ are metric preserving.
\end{lemma}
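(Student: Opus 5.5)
We have to show $\varrho(p^+,q^+)=\varrho(p,q)$ for $p,q\in K_n(T)$, where $\varrho$ is computed in an $\aleph_0$-saturated model $M$. We read $p^+$ as an $(n+1)$-type in the variables $\x y$, obtained from $p$ by integrating out $y$. The natural metric on $K_{n+1}(T)$ is $\varrho(p^+,q^+)=\inf\{\rho(\a,\b)+\rho(c,e)\}$ over realizations $\a c\vDash p^+$ and $\b e\vDash q^+$, since $M^{n+1}$ carries the sum pseudometric. The plan is to prove the two inequalities separately. In each direction the key point is that $p^+$ is determined by $p$ via the integration operator, so realizing $p^+$ amounts to realizing $p$ together with a ``generic'' last coordinate.

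\textbf{Step 1: $\varrho(p,q)\leqslant\varrho(p^+,q^+)$.} Let $\a c\vDash p^+$. The natural embedding $D_n(T)\subseteq D_{n+1}(T)$ lets us apply $p^+$ to any $\phi(\x)$ with $y$ not free, and by (A18)
$$p^+(\phi)=p\Big(\int\phi\,dy\Big)=p(\phi).$$
So the restriction of $p^+$ to $\x$ is $p$, and likewise for $q$. Hence any realizations $\a c\vDash p^+$ and $\b e\vDash q^+$ give $\a\vDash p$, $\b\vDash q$ and $\rho(\a,\b)\leqslant\rho(\a,\b)+\rho(c,e)$. Taking infima gives the inequality.

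\textbf{Step 2: $\varrho(p^+,q^+)\leqslant\varrho(p,q)$.} Given $\epsilon>0$, choose $\a\vDash p$ and $\b\vDash q$ in $M$ with $\rho(\a,\b)<\varrho(p,q)+\epsilon$. We want a single $c$ such that $\a c\vDash p^+$ and $\b c\vDash q^+$; then $\rho(\a c,\b c)=\rho(\a,\b)$ and we are done.

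The requirement is that $c$ realize the partial type over $\a\b$
$$\Sigma(y)=\Big\{\phi(\a,y)=\int\phi(\a,y')dy',\ \ \psi(\b,y)=\int\psi(\b,y')dy'\ :\ \phi,\psi\Big\}.$$
Define a functional on formulas $\theta(\a,\b,y)$ by $\theta\mapsto\big(\int\theta(\a,\b,y)dy\big)^M$. This is a type over $\a\b$, positive and normalized by (A15)–(A17) and the fact that integration is itself a type on $M$. On the formulas $\phi(\a,y)$ it takes the value $\big(\int\phi(\a,y)dy\big)^M=p(\int\phi\,dy)=p^+(\phi)$, and similarly for $\psi(\b,y)$. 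By Lemma \ref{realize} this type is realized by some $c$ in an elementary extension. Since it is a 1-type over the finite set $\a\b$ and $M$ is $\aleph_0$-saturated, it is in fact realized in $M$. This $c$ witnesses $\a c\vDash p^+$ and $\b c\vDash q^+$.

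\textbf{Main obstacle.} The delicate point is Step 2: checking that the integration functional over $\a\b$ really restricts correctly to both $p^+$ and $q^+$ simultaneously, i.e.\ that $\a c\vDash p^+$ means exactly that $\phi(\a,c)=p(\int\phi\,dy)$ for all $\phi(\x,y)$. This works precisely because integration over $\a\b$ is a single positive normalized functional. Once the identification is verified, the combination of Lemma \ref{realize} and $\aleph_0$-saturation closes the argument, and injectivity of $+$ follows as a byproduct.
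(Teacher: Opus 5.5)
Your proof is correct and follows the paper's argument. One inequality comes from the restriction of $p^+$ to $\bar x$ being $p$; the other comes from a single common $c$ that is generic for both $\bar a$ and $\bar b$, so that the distance stays $\rho(\bar a,\bar b)$.

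The only real difference is how you obtain $c$. You realize the whole integration type $\theta(\bar a,\bar b,y)\mapsto(\int\theta\,dy)^M$ over $\bar a\bar b$ in the $\aleph_0$-saturated $M$. The paper instead applies affine compactness and the mean value property to $\theta=\phi(\bar a,y)+\psi(\bar b,y)$. Your $\epsilon$-approximation and your explicit appeal to saturation to place $c$ in $M$ are slightly more careful than the paper's assumption that the infimum defining $\varrho(p,q)$ is attained.
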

\begin{proof}
Assume $\a c\vDash p^+$ and $\b e\vDash q^+$ where $|\a|=|\b|=n$.
Then, $\a\vDash p$ and $\b\vDash q$.
So, $$\mathbf{\varrho}(p,q)\leqslant\varrho(p^+,q^+).$$
For the inverse inequality, assume $\a\vDash p$ and $\b\vDash q$ such that $\varrho(p,q)=\rho(\a,\b)$.
We only need to prove that there exists $c\in M$ such that for every $\phi(\x,y)$
$$p^+(\phi)=\phi^M(\a,c),\ \ \ \ \ q^+(\phi)=\phi^M(\b,c).$$
This means that for every $\phi(\x,y)$$$\phi^M(\a,c)=\int\phi^M(\a,y)dy,\ \ \ \ \ \ \phi^M(\b,c)=\int\phi^M(\b,y)dy.$$
For this purpose, by affine compactness, we must prove that for every $\phi(\x,y),\psi(\x,y)$ there exists $c$ satisfying the condition $$\phi^M(\a,y)+\psi^M(\b,y)=\int\phi^M(\a,y)dy+\int\psi^M(\b,y)dy.$$
Let $\theta(y)=\phi(\a,y)+\psi(\b,y)$.
So, we need to find $c$ such that $\theta(c)=\int\theta(y)dy$. This is guaranteed by the mean value property for $\theta$.
\end{proof}

On the other hand, the restriction of $p^+(\x,y)$ to $\x$ is the type $p(\x)$.
We may therefore regard $p\mapsto p^+$ as the inclusion map and write
$$K_1(T)\subseteq K_2(T)\subseteq\cdots$$ where the inclusions are metric preserving and logic continuous.

\section{The isomorphism theorem}
As in AL, to prove the isomorphism theorem, we need to generalize the powermean construction.
Let $L$ be a metric language (without the charge symbol) and $M$ be a structure in $L$. Let $(I,\mathcal A,\wp)$ be a charge space.
A map $a:I\rightarrow M$ is called measurable if $a^{-1}(B)\in\mathcal A$ for every Borel $B\subseteq M$.
A function $u:I\rightarrow\Rn$ is measurable if $u^{-1}(X)\in\mathcal A$ for every $X\in\mathcal H(\Rn)$.
The topological weight of $M$ is defined by
$$w(M)=\min\{|\mathcal{O}|:\ \mathcal{O}\ \mbox{is\ a\ basis\ for } M \}+\aleph_0.$$
A $L$-structure $M$ is \emph{$\mathcal A$-meanable}\index{meanable} if $M$ is finite or $\mathcal{A}$ is $w(M)^+$-complete.
Assume $M$ is $\mathcal A$-meanable.
Then, for every bounded continuous $f:M^n\rightarrow\Rn$ and measurable maps $a^1,...,a^n:I\rightarrow M$,
the function $u(i)=f(a^1_i,...,a^n_i)$ is measurable (hence integrable).
Similarly, for every continuous $g:M^n\rightarrow M$,\ \ $a(i)=g(a^1_i,...,a^n_i)$ is measurable.
We can now define the powermean $M^\wp$.

For measurable $a,b:I\rightarrow M$ let $a\sim b$ if $\int d(a_i,b_i)d\wp=0$. The equivalence class of $a=(a_i)$ is denoted by $[a_i]$.
Let $M^{\wp}$ be the set of equivalence classes of measurable maps $a:I\rightarrow M$.
We can then define a metric structure on $M^{\wp}$ exactly as in section \ref{section compactness}.
We have then the following results (see \cite{ACMT}, \cite{Isomorphism}).

\begin{theorem} \label{powermean} \emph{(Powermean)}
Let $(I,\mathcal A,\wp)$ be a charge space and $M$ be an $\mathcal A $-meanable structure.
Then, for each $L$-formula $\phi(\x)$ and $[a^1_i],...,[a^n_i]\in M^{\wp}$,
$$\phi^{M^{\wp}}([a^1_i],...,[a^n_i])=\int_I\phi^M(a^1_i,...,a^n_i)d\wp.$$
\end{theorem}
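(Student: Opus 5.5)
The plan is to argue by induction on the complexity of $\phi$, with the same structure as the proof of the Ultramean theorem. Note that $L$ here is a metric language without the charge symbol. So the formulas involved are AL-formulas, and only the connectives and the quantifiers $\sup$, $\inf$ need to be handled. The charge $\wp$ lives on $I$, not on $M$.

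\textbf{Well-definedness and base cases.} I first check that $M^\wp$ is well defined. By $\mathcal A$-meanability, for every Lipschitz $f:M^n\rightarrow\Rn$ and measurable $a^1,\dots,a^n$, the function $i\mapsto f(a^1_i,\dots,a^n_i)$ is measurable and bounded, hence integrable. The same holds for $i\mapsto\rho(a_i,b_i)$, so $\rho([a_i],[b_i])=\int_I\rho(a_i,b_i)d\wp$ makes sense. By the remark on continuous $g:M^n\rightarrow M$, function symbols send measurable maps to measurable maps, and constants give constant, hence measurable, maps. This gives the atomic case: $R^{M^\wp}([a^1_i],\dots)=\int_I R^M(a^1_i,\dots)d\wp$ by definition, and terms are computed pointwise. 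The connective cases $r\phi+s\psi$ follow from linearity of $\int_I$. Throughout I will use that each $\phi^M$ is a bounded Lipschitz function. Hence $i\mapsto\phi^M(a^1_i,\dots,a^n_i)$ is integrable, and the right-hand side of the statement makes sense at every stage.

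\textbf{Quantifier case.} This is the main obstacle. Take $\phi=\sup_y\psi(\x,y)$ and fix $[a_i]$.

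The inequality $\phi^{M^\wp}\leqslant\int_I\phi^M(a_i)d\wp$ is immediate. For any measurable $b$, the induction hypothesis gives $\psi^{M^\wp}([a_i],[b_i])=\int_I\psi^M(a_i,b_i)d\wp$, and this is at most $\int_I\phi^M(a_i)d\wp$ by monotonicity.

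For the reverse inequality I need, for each $\epsilon>0$, a \emph{measurable} $b:I\rightarrow M$ with $\psi^M(a_i,b_i)\geqslant\phi^M(a_i)-\epsilon$ for all $i$. The difficulty is exactly measurability: in the ultramean case any choice of $b_i$ worked, because $\wp$ was defined on the whole power set. I would build $b$ by the usual countable-partition trick, using the Lipschitz continuity of $\psi^M$ and $\phi^M$ together with $w(M)^+$-completeness of $\mathcal A$.

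\emph{Step 1.} Fix a basis $\mathcal O$ of $M^n$ of size at most $w(M)$. Refine it to a cover $\{U_k\}_{k<\kappa}$, $\kappa\leqslant w(M)$, by open sets of diameter less than $\delta=\epsilon/(3\lambda_\psi+3\lambda_\phi)$, with $\lambda$ the Lipschitz constants.

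\emph{Step 2.} Disjointify the cover into $V_k=U_k\setminus\bigcup_{j<k}U_j$. Each $V_k$ is a Borel set, so $A_k=\bar a^{-1}(V_k)\in\mathcal A$. Here $w(M)^+$-completeness is used to see that the unions of fewer than $w(M)^+$ sets needed for measurability stay in $\mathcal A$.

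\emph{Step 3.} For each nonempty $V_k$, pick $\bar c_k\in V_k$ and $e_k\in M$ with $\psi^M(\bar c_k,e_k)\geqslant\phi^M(\bar c_k)-\epsilon/3$.

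\emph{Step 4.} Set $b_i=e_k$ for $i\in A_k$. Then $b$ takes at most $w(M)$ values on a partition of $I$ into sets of $\mathcal A$. Its preimage of any Borel set is a union of at most $w(M)$ members of $\mathcal A$, hence lies in $\mathcal A$, so $b$ is measurable.

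\emph{Step 5.} By the Lipschitz estimates, $\psi^M(a_i,b_i)\geqslant\phi^M(a_i)-\epsilon$ for every $i$. Integrating gives $\phi^{M^\wp}\geqslant\int_I\phi^M(a_i)d\wp-\epsilon$.

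The case $\inf_y$ is symmetric, or follows from $\inf=-\sup-$. If $M$ is finite, every map $I\rightarrow M$ is a finite step function. Here I would instead interpret the finite case of meanability as saying that all maps in question are measurable, since the preimages involved form finite partitions.

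Finally, I would remark that only the quantifier step genuinely uses meanability, and that this is the only place the argument differs from the Ultramean theorem. This is the point to cite \cite{ACMT} and \cite{Isomorphism} for the detailed measurability bookkeeping.
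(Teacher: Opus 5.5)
The paper gives no proof here (it defers to \cite{ACMT}, \cite{Isomorphism}), and your argument is correct and is the natural one. Since $L$ has no charge symbol, only connectives and $\sup/\inf$ occur, and you run the paper's Ultramean induction with the single necessary change: the $\epsilon$-witnesses are chosen measurably via a partition of $M^n$ into at most $w(M)$ Borel pieces of small diameter, which $w(M)^+$-completeness of $\mathcal A$ makes legitimate. The only loose spot is your finite case, where not every map $I\rightarrow M$ is measurable; there you should just run Steps 1--5 with the partition of $M^n$ into singletons, which requires only finitely many Boolean operations in $\mathcal A$.
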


\begin{theorem} \label{isomorphism}
Assume $M\equiv N$. Then, there is a charge space $(I,\mathcal A,\wp)$, such $M$ and $N$
are $\mathcal A$-meanable and $M^{\wp}\simeq N^{\wp}$.
\end{theorem}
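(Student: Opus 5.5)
The plan is to reduce to the isomorphism theorem of affine logic (the AL case in \cite{Isomorphism}, \cite{ACMT}) and then check that the integral is carried along. That theorem is stated for a metric language without the charge symbol, but its proof only uses three ingredients. The first is the Powermean Theorem \ref{powermean}. The second is affine compactness, here Theorem \ref{affine compactness}. The third is a back-and-forth construction between sufficiently saturated, strongly homogeneous elementary extensions of equal density character, obtained as in Proposition \ref{homogeneous2}. The powermean itself is taken over a charge space whose algebra is complete enough to make both $M$ and $N$ meanable.

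Concretely, the construction goes in four steps.
\begin{enumerate}
\item Use the amalgamation corollary of affine compactness to find $K$ with $M\preccurlyeq K$ and $N\preccurlyeq K$.
\item By Proposition \ref{homogeneous2}, pass to a $\kappa$-saturated, strongly $\kappa$-homogeneous $K'\succcurlyeq K$ with $\kappa\geqslant\mathsf{dc}(M)+\mathsf{dc}(N)$.
\item Choose an index set $I$, with $\mathcal A$ a $w$-complete algebra for $w=\max(w(M),w(N))^+$, for instance $\mathcal A=\wp(I)$. This choice makes both $M$ and $N$ $\mathcal A$-meanable.
\item Choose the charge $\wp$ on $I$ as in the AL proof. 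There, $\wp$ is an ultracharge built via Corollary \ref{KR} from a positive functional representing the common theory, so that the Powermean Theorem \ref{powermean} shows $M^\wp$ and $N^\wp$ are saturated enough of the same density character. The AL back-and-forth (or the AL-analogue of the uniqueness of saturated models, stated above as the proposition on $\kappa$-saturated structures with $\kappa=\mathsf{dc}$) then gives an AL-isomorphism $M^\wp\simeq N^\wp$.
\end{enumerate}

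In the charged setting, the definition of $M^\wp$ as ``exactly as in section \ref{section compactness}'' also equips $M^\wp$ with a Borel charge, via Corollary \ref{KR} and the identity $(\dag)$. Consequently Theorem \ref{powermean} holds for all AL$_{\int}$-formulas: the $\sup$ step uses measurable selections of near-witnesses, which exist by $\mathcal A$-completeness, and the $\int$ step is exactly $(\dag)$. With this in hand, the saturation and back-and-forth arguments go through verbatim using affine compactness for AL$_{\int}$. The back-and-forth should now extend finite partial maps preserving all AL$_{\int}$-formulas. Its union is then an isomorphism in the sense defined here, namely a surjective map preserving all formulas including integrals. No measure-preservation is required, which is exactly why the definition of isomorphism was chosen that way.

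The main obstacle is the saturation step. We must verify that a type over fewer than $\kappa$ parameters in $M^\wp$, now a positive functional on AL$_{\int}$-formulas, is realized by some measurable $[a_i]$. The subtlety is that the integral constraints $\int\phi(\bar a,y)dy$ are part of the type. In AL one realizes a finitely satisfiable set coordinatewise and glues the coordinates using the charge. Here I would first handle finitely many conditions. Using the powermean identity, the finitely many formulas are evaluated as $\int_I\phi^M(\bar a_i,y_i)d\wp$, and then $\wp$-positivity together with the mean value property (Lemma \ref{mean} and its integral version) gives a selection $i\mapsto y_i$ achieving the prescribed values. I would then pass to the full type using the $w^+$-completeness of $\mathcal A$ and a Riesz-representation argument: Corollary \ref{KR} applied to the type as a positive functional on $D_1(A)$. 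If that fails, the fallback is to take $\mathcal A$-meanability with a larger cardinal and iterate the powermean, as is done in the AL proof.
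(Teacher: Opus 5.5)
\textbf{There is a genuine gap.} First, note what this statement is. Theorem \ref{isomorphism} concerns a language \emph{without} the charge symbol (see the sentence introducing $M^\wp$), so it is the affine-logic isomorphism theorem. The paper does not prove it here; it quotes it from \cite{ACMT}, \cite{Isomorphism}. Your opening ``reduce to the AL case'' therefore amounts to citing the statement itself. Everything you do with integrals belongs to Theorem \ref{isomorphism charged} instead: extending Theorem \ref{powermean} to AL$_{\int}$-formulas via $(\dag)$, and running the back-and-forth on AL$_{\int}$-formulas. Even for that theorem the integral bookkeeping is unnecessary. The paper adds a predicate $R_\phi$ for each $L_\mu$-formula and applies the present theorem to the uncharged expansions $\bar M\equiv\bar N$. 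It then uses the powermean identity to see that $R_\phi=\phi$ persists in the powermeans.

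Read as an actual proof of the AL theorem, your sketch fails at Step 4. Steps 1--2 play no role: $K'$ is an elementary extension, not a powermean, and nothing later ties it to $M^\wp$ or $N^\wp$. Step 4 claims that an ultracharge obtained through Corollary \ref{KR} from a functional representing the common theory makes $M^\wp$ and $N^\wp$ $\kappa$-saturated with density character exactly $\kappa$. That is the construction from the proof of Theorem \ref{affine compactness}, and it gives no saturation. When all factors are $M$, every charge on $I$ represents the theory, including a Dirac charge, for which $M^\wp\simeq M$.

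Realizing all types over fewer than $\kappa$ parameters in a powermean needs a goodness property of $\wp$, the analogue of good ultrafilters. Your ``main obstacle'' paragraph does not supply one. Lemma \ref{mean} only says that a theory with a new constant stays affinely satisfiable; it does not choose a measurable $i\mapsto y_i$ meeting infinitely many constraints at once. Corollary \ref{KR} applied to a type only represents it by a charge on $M$; it does not produce a point $[a_i]\in M^\wp$ realizing it. Even granting goodness, you must make the saturation level equal the density character of the powermean. That is the familiar cardinal-arithmetic obstruction ($2^\lambda=\lambda^+$) which forces Keisler's saturation proof of the Keisler--Shelah theorem to assume GCH.

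A ZFC proof has to build the isomorphic powermeans another way, for instance by the iteration you mention only as a fallback. One would alternately embed each structure elementarily into a powermean of the other, along a chain of cofinality $>w(M)+w(N)$. One would then check that the union is again a powermean over the limit charge space. That is the kind of situation the statement's general $w(M)^+$-complete charge spaces, rather than ultracharges on $\mathcal P(I)$, are suited to. As written, that step is only gestured at, so the proposal does not establish the theorem.
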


Now, add the charge symbol to $L$ and denote it by $L_\mu$.
We can then extend the isomorphism theorem to probability structures.

\begin{theorem}\label{isomorphism charged}
For $L_\mu$-structures $M,N$, if $M\equiv N$, then $M^\wp\simeq N^\wp$ for some $\wp$.
\end{theorem}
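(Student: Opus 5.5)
The plan is to reduce to the uncharged isomorphism theorem (Theorem \ref{isomorphism}) by encoding the charge into the language. Then I would check that the charge built on the powermean by Corollary \ref{KR} is compatible with the isomorphism so obtained.

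\emph{Step 1 (Morleyization of $\int$).} Let $L'$ be the Lipschitz language with a new relation symbol $R_\phi(\x)$ for every $L_\mu$-formula $\phi(\x)$ of the form $\int\psi(\x,y)\,dy$. Its Lipschitz constant is $\lambda_\phi$, and after rescaling by ${\sf b}_\phi$ its values lie in $[0,1]$. Interpret $R_\phi$ in $M$ and $N$ by $\phi^M$ and $\phi^N$, obtaining $L'$-structures $M'$, $N'$ without a charge. By induction on formulas, every $L_\mu$-formula is equivalent in $M$ (and in $N$) to an integration-free $L'$-formula: one replaces each outermost integral subformula by the corresponding atomic $R_\phi$. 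Hence $M\equiv N$ in $L_\mu$ gives $M'\equiv N'$ in AL. Theorem \ref{isomorphism} then provides a charge space $(I,\mathcal A,\wp)$ such that $M',N'$ are $\mathcal A$-meanable and there is an isomorphism $F:M'^{\wp}\to N'^{\wp}$. Theorem \ref{powermean} gives $\phi^{M'^\wp}([a_i])=\int_I\phi^{M'}(a_i)\,d\wp$ for every $L'$-formula $\phi$.

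\emph{Step 2 (charging the powermean).} On $M^\wp$ (the same underlying set as $M'^\wp$), consider the majorizing subspace $G\subseteq\mathbf C_b(M^\wp)$ of functions $[a_i]\mapsto\int_I\psi^M(\a_i,y_i)\,d\wp$, where $y$ is the distinguished variable and the parameters $\a$ are fixed. Define
$$\Lambda\Big(\int_I\psi^M(\a_i,\cdot)\,d\wp\Big)=\int_I\Big(\int\psi^M(\a_i,y)\,dy\Big)d\wp=R_{\int\psi dy}^{M'^\wp}([\a_i]).$$
Well-definedness and positivity follow as in the ultramean construction. Take $\inf_y$ pointwise, use that $\inf_y\psi\leqslant\int\psi\,dy$ holds in $M$, and use the measurable-selection versions of the approximate witnesses; these exist because $\mathcal A$ is $w(M)^+$-complete, which is the same fact used in proving Theorem \ref{powermean}. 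Corollary \ref{KR} then yields a regular Borel charge on $M^\wp$ with $\int\psi^{M^\wp}(\a,y)\,dy=\int_I(\int\psi^M(\a_i,y)dy)d\wp$. By induction on formulas, exactly as in the Ultramean theorem, this gives $\phi^{M^\wp}([\a_i])=\int_I\phi^M(\a_i)\,d\wp$ for all $L_\mu$-formulas. Do the same for $N^\wp$.

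\emph{Step 3 (transfer).} For every $L_\mu$-formula $\phi$ we now have $\phi^{M^\wp}=\phi^{M'^\wp}$ after translating $\phi$ into $L'$, and similarly for $N$. Since $F$ preserves all $L'$-formulas, it preserves all $L_\mu$-formulas. An isomorphism in this paper is only required to be a surjective elementary embedding, with no measure preservation, so $F$ is an isomorphism $M^\wp\simeq N^\wp$.

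The main obstacle is Step 2. Positivity of $\Lambda$ requires measurable near-minimizers $i\mapsto b_i$, and the integral case of the induction requires that integrals of formulas over $M^\wp$ are determined by the values of $\Lambda$ on $G$. Here I would copy the argument behind Theorem \ref{powermean}, replacing its $\sup$ step by the identity $(\dag)$.
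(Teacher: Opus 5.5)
Your proposal is correct and follows the paper's route. You encode the $L_\mu$-formulas by new relation symbols, apply the uncharged Theorem \ref{isomorphism} to the charge-free expansions, and transfer back. Your Step 2 makes explicit the charged powermean construction (via Corollary \ref{KR} and measurable near-minimizers) that the paper uses tacitly when it asserts that $R_\phi=\phi$ holds in ${\sf M}^\wp$ and ${\sf N}^\wp$.

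One small correction: to get $M'\equiv N'$ you need the converse of the translation you cite. That is, substituting the corresponding (rescaled) $L_\mu$-formula for each $R_\phi$ turns any $L'$-sentence into an $L_\mu$-sentence with the same value in $M$ (resp.\ $N$), which is immediate.
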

\begin{proof}
For each $L_\mu$-formula $\phi(\x)$ with $|\x|=n\geqslant1$, take a $n$-ary relation symbol $R_\phi$
and set $$\bar L=L\cup\{R_\phi:\ \phi\ \textrm{is a $L_\mu$-formula}\}.$$
Let $\bar M$ be the (measure-free) expansion of $M$ to the language $\bar L$ obtained by defining $R^M_\phi(\x)=\phi^M(\x)$ for every $\phi$.
Define $\bar N$ in a similar way. Also, set ${\sf M}=(\bar M,\mu_M)$ and ${\sf N}=(\bar N,\mu_N)$.
Then, ${\sf M}\equiv{\sf N}$ in the language $\bar{L}_\mu$. Therefore, $\bar M\equiv\bar N$ in the language $\bar L$.
By Theorem \ref{isomorphism}, there is a charge space $\wp$ on some index set $I$ such that
$\bar M^\wp\simeq\bar N^\wp$. Now, we have only to prove that $M^\wp\simeq N^\wp$ in the language $L_\mu$.
To see this, take any formula $\phi(\x)$ in $L_\mu$ and $\a\in M^\wp$.
Note that $R_\phi(\x)=\phi(\x)$ holds in both $\sf M^\wp$ and $\sf N^\wp$. Then, we have that
$$\phi^{M^\wp}(\a)=\phi^{{\sf M}^\wp}(\a)=R^{{\sf M}^\wp}_\phi(\a)=R^{{\bar M}^\wp}_\phi(\a)$$
$$=R^{{\bar N}^\wp}_\phi(f(\a))=R^{{\sf N}^\wp}_\phi(f(\a))=\phi^{{\sf N}^\wp}(f(\a))=\phi^{N^\wp}(f(\a)).$$
\end{proof}

Using Theorem \ref{isomorphism charged}, an AL$_{\int}$ variant of the Robinson consistency theorem is proved.

\begin{proposition}
Let $T_i$ be a satisfiable theory in $L_i$ for $i=1,2$ and $T=T_1\cap T_2$ be complete in $L=L_1\cap L_2$.
Then, $T_1\cup T_1$ is satisfiable.
\end{proposition}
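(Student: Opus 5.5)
The plan is to follow the classical proof of Robinson consistency via the isomorphism theorem, using Theorem \ref{isomorphism charged}. Take $M_1\vDash T_1$ and $M_2\vDash T_2$. Their reducts $M_1|_L$ and $M_2|_L$ are both models of $T=T_1\cap T_2$, and $T$ is complete in $L$, so $M_1|_L\equiv M_2|_L$ as $L_\mu$-structures. Here completeness means that every $L$-sentence $\sigma$ has a value fixed by $T$, i.e.\ $\sigma=r\in T$ for some $r$ up to closure. I first check this. If $T_1\vDash\sigma=r$ and $T_2\vDash\sigma=s$, then $\sigma\leqslant\max(r,s)$ and $\min(r,s)\leqslant\sigma$ both belong to $T$ (taking theories deductively closed), so $T$ pins down $\sigma$ only if $r=s$. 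I would therefore read ``complete'' as: for each $L$-sentence $\sigma$ there is a unique $r$ with $T\vDash\sigma=r$. Then $\sigma^{M_1}=\sigma^{M_2}=r$.

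Next apply Theorem \ref{isomorphism charged} to $M_1|_L\equiv M_2|_L$. This gives a charge space $(I,\mathcal A,\wp)$ and an $L_\mu$-isomorphism $f:(M_1|_L)^\wp\to(M_2|_L)^\wp$. The issue is that the theorem is stated for $L$ alone, while I need the powermeans of the full $L_1$- and $L_2$-structures and the fact that they elementarily extend $M_1$ and $M_2$. I would choose $\wp$ so that $M_1$ and $M_2$ (as $L_1$- resp.\ $L_2$-structures, with expansions by all $R_\phi$ as in the proof of Theorem \ref{isomorphism charged}) are $\mathcal A$-meanable. By the Powermean Theorem \ref{powermean} applied to the expanded languages, $M_k^\wp$ is an $L_k$-structure satisfying $T_k$, since the diagonal embedding is elementary. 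The $L$-reduct of $M_k^\wp$ is $(M_k|_L)^\wp$, because the metric, the $L$-symbols and the $R_\phi$ for $L_\mu$-formulas $\phi$ are computed identically.

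Then transport the $L_2$-structure of $M_2^\wp$ to $M_1^\wp$ along $f$. For each symbol $S\in L_2\setminus L$, define $S$ on $M_1^\wp$ by $S^{K}(\a)=S^{M_2^\wp}(f\a)$, and for function symbols use $f^{-1}\circ F\circ f$. Since $f$ is a surjective isometry preserving every $L_\mu$-formula, the result is a well-defined $L_1\cup L_2$-prestructure $K$ whose $L_1$-reduct is $M_1^\wp$. The charge is kept from $M_1^\wp$. Its $L_2$-reduct is isomorphic via $f$ to $M_2^\wp$ in the charged language. This requires that the $L_2$-integrals computed with $\mu_{M_1^\wp}$ agree with those computed with $\mu_{M_2^\wp}$ under $f$. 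Handling that is the main obstacle.

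Since $f$ is only elementary and not measure preserving, integrals of $L$-formulas agree, but integrals of $L_2$-formulas (pulled back by $f$) need not. I would try to fix this by choosing the charge on $K$ afresh. Let $G$ be the vector space spanned by the functions $\phi^{M_1^\wp}$ for $L_1$-formulas with parameters and $\psi^{M_2^\wp}\circ f$ for $L_2$-formulas. On $G$, define $\Lambda$ as the sum of the respective integrals, and then apply Corollary \ref{KR}. Positivity of $\Lambda$ on sums $\phi+\psi\circ f\geqslant0$ is exactly the point where the mixed information must be controlled. I expect this to need the mean value property together with affine compactness, realizing the types in a saturated or homogeneous setting (Proposition \ref{homogeneous2}). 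If this direct route fails, the fallback is to build $K$ via Henkin's construction (Proposition \ref{completeness}), using the isomorphism only to show that $T_1\cup T_2$ is consistent in the proof system.
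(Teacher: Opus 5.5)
Your strategy is the one the paper indicates; the paper gives no argument beyond citing Theorem \ref{isomorphism charged}. However, your proposal stops exactly at the step that carries the content. That step cannot be completed using only the fact that $f$ is an $L_\mu$-isomorphism: such an $f$ need not be measure preserving, and after transport the $L_1$- and $L_2$-parts of $K$ can be correlated in a way no single charge accommodates.

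Here is an example. Let $L$ contain only $\rho$, $L_1=\{P\}$, $L_2=\{Q\}$, and give $\Nn$ the discrete metric. Let $\mathcal U_1\neq\mathcal U_2$ be nonprincipal ultrafilters viewed as ultracharges, and pick $A\in\mathcal U_1\setminus\mathcal U_2$. Put $M_1=(\Nn,P=\chi_A,\mathcal U_1)$, $M_2=(\Nn,Q=\chi_A,\mathcal U_2)$ and $T_k=\mathrm{Th}(M_k)$. By induction, in $(\Nn,\rho,\mathcal U_k)$ every $L_\mu$-formula has a value depending only on the equality type of its arguments, and not on $\mathcal U_k$: an integral is just the value of the integrand at any point outside the parameters. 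Hence $T_1\cap T_2$ is complete, and the identity is an $L_\mu$-isomorphism $M_1|_L\to M_2|_L$ (as is $[a_i]\mapsto[a_i]$ between powermeans). Transporting along it gives $P^K=Q^K$, whereas $T_1\vDash\int P(x)dx=1$ and $T_2\vDash\int Q(x)dx=0$. So your $\Lambda$ assigns both $1$ and $0$ to the same element of $G$ and is not even well defined. Your fallback is circular: by soundness and weak completeness, consistency of $T_1\cup T_2$ \emph{is} the statement.

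The missing idea is to \emph{build} the amalgam so that the two prescribed integrals can be coupled. Positivity of $\Lambda$ follows from a joint mean value property: for all parameters $\a$, all $L_1$-formulas $\phi$ and all $L_2$-formulas $\psi$, some single $c$ satisfies both $\phi(\a,c)=\int\phi(\a,y)dy$ and $\psi(\a,c)=\int\psi(\a,y)dy$ (prescribed values).

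One way to obtain this is as follows.
\begin{itemize}
\item Pass to the measure-free languages $\bar L_k=L_k\cup\{R_\phi\}$ from the proof of Theorem \ref{isomorphism charged}, letting the symbols $R_\phi$ for $L_\mu$-formulas $\phi$ be shared.
\item Build a chain $K_0\subseteq K_1\subseteq\cdots$. Obtain $K_{n+1}$ from the measure-free Robinson theorem applied to $\mathrm{Th}_{\bar L_1}(K_n,K_n)\cup r_1(c)$ and $\mathrm{Th}_{\bar L_2}(K_n,K_n)\cup r_2(c)$. Here $r_k$ is the mean type $R_\phi(\a,y)\mapsto R_{\int\phi\,dy}(\a)$ over $K_n$. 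The measure-free Robinson theorem does follow from Theorem \ref{isomorphism} by exactly your transport argument, since there is no charge to match.
\item These two theories have the same complete restriction to $\bar L(K_n,c)$, because the integral of a common formula is named by a common symbol.
\item Only the $\bar L_1$- and $\bar L_2$-reducts form elementary chains, not the mixed theory. This is exactly what allows the joint witness $c$ to exist.
\end{itemize}
In the completed union the joint mean value property holds, and Corollary \ref{KR} yields $\mu$. An induction then gives $\phi^{(K,\mu)}=R^K_\phi$ for all $L_1$- and $L_2$-formulas, so $(K,\mu)\vDash T_1\cup T_2$.
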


An appropriate interpolation theorem can be then deduced from the above result as in AL (see \cite{ACMT}).

Proposition \ref{isomorphism charged} helps us to give a characterization of the notion of non-forking extension
for types in AL (see \cite{Ibarlucia2} for definitions) in terms of AL$_{\int}$-embeddings.
Let $T$ be a complete stable affine theory in the metric language $L$ (without the charge symbol) and $M\vDash T$.
A relation $R:M^n\rightarrow\Rn$ is $M$-definable if it is the uniform limit of a
sequence of formulas $\theta_k^M(\y)$ with parameters in $M$.
A \emph{definition scheme} for $p(x)\in K_1(M)$ is a map which assigns to each formula $\phi(x,\y)$ (with parameters in $M$) a
$M$-definable relation $\frak d\phi(\y)$ such that $$\frak d\phi(\a)=p(\phi(x,\a))\hspace{14mm} \forall\a\in M.$$
As stated before, $p$ extends to the completion of $D_1(T)$ in a continuous way. Correspondingly,
$\frak d$ extends to the family of all $M$-definable relations, i.e. for each $R(x,\y)$, 
there is a $M$-definable $\frak dR(\y)$ such that $$\frak dR(\a)=p(R(x,\a))\hspace{14mm}\a\in M.$$
By stability, every such type has a definition scheme.
By Proposition \ref{KR}, there is a Borel charge $\mu_p$ on $M$ such that $p(\phi(x))=\int\phi^M(x)d\mu_p$
for every $\phi(x)$ with parameters in $M$. Let $L_\mu$ be the language $L$ augmented with the charge symbol $\mu$.
Then, $(M,\mu_p)$ is a structure in $L_\mu$ and
for each $\phi(x,\y)$ $$\frak d\phi(\a)=p(\phi(x,\a))=\int\phi(x,\a)d\mu_p\hspace{14mm} \forall\a\in M.$$
Note that different extensions $\mu_p$ of $p$ give rise to the same probability structure up to isomorphism.
Let $(I,\mathcal A,\wp)$ be a charge space for which $M$ is $\mathcal A$-meanable.
Define a type $p^{\wp}\in K_1(M^\wp)$ by $$p^{\wp}(\phi(x,\a))=\int_I p(\phi(x,\a_i))d\wp\hspace{12mm} (\a\in M^\wp).$$
We have therefore that $$p^{\wp}(\phi(x,\a))=\int_I\frak d\phi(\a_i)d\wp=(\frak d\phi)^{M^\wp}(\a).$$
Therefore, $p^{\wp}$ is again definable over $M$ and hence non-forking over $p$.

\begin{proposition}\label{def-scheme1}
Let $T$ be a stable AL-theory, $M\vDash T$ and $p(x)\in K_1(M)$.

(i) In $(M,\mu_p)$, every $L_\mu$-formula is equivalent to a $L(M)$-definable relation.

(ii) If $M\preccurlyeq N$, then $q\in K_1(N)$ is a non-forking extension of $p$ iff $(M,\mu_p)\preccurlyeq(N,\mu_q)$.
\end{proposition}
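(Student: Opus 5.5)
For (i) I will argue by induction on the complexity of $L_\mu$-formulas $\phi(\y)$, possibly with parameters from $M$. The claim is that $\phi^{(M,\mu_p)}(\y)$ coincides on $M$ with an $L(M)$-definable relation.

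\begin{itemize}
\item \emph{Atomic formulas and connectives.} These are immediate, since uniform limits of $L(M)$-formulas are closed under $+$ and scalar multiplication.
\item \emph{Quantifiers.} For $\sup_z$ and $\inf_z$, suppose $\phi(\y,z)$ is the uniform limit of $\theta_k(\y,z)$. Then $\sup_z\phi$ is the uniform limit of $\sup_z\theta_k$, because $|\sup_z\phi-\sup_z\theta_k|\leqslant\|\phi-\theta_k\|_\infty$.
\item \emph{Integration.} For $\int\phi(x,\y)\,dx$, write $\phi=\lim_k\theta_k(x,\y)$ uniformly. Then
$$\int\phi(x,\a)\,d\mu_p=\lim_k\int\theta_k(x,\a)\,d\mu_p=\lim_k p(\theta_k(x,\a))=\lim_k\frak d\theta_k(\a)=\frak d\phi(\a),$$
using the continuity of $p$ on the completion of $D_1$. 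Each $\frak d\theta_k$ is $M$-definable, and the $\frak d\theta_k$ converge uniformly because $\|\frak d\theta_k-\frak d\theta_l\|\leqslant\|\theta_k-\theta_l\|$. Here I use stability to get the definition scheme, extended to definable relations as in the discussion above.
\end{itemize}

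For (ii), I use the standard characterisation of non-forking in stable AL-theories (as in \cite{Ibarlucia2}): $q$ is a non-forking extension of $p$ iff $q$ is definable over $M$ by the same scheme $\frak d$, i.e.\ $q(\phi(x,\b))=(\frak d\phi)^N(\b)$ for all $\b\in N$. Here $(\frak d\phi)^N$ is the natural interpretation in $N$ of the uniform limit of $L(M)$-formulas.

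\emph{($\Rightarrow$)} Suppose $q$ is non-forking over $p$. I show by the same induction as in (i), carried out simultaneously in $M$ and $N$, that every $L_\mu$-formula $\phi(\y)$ is interpreted in $(N,\mu_q)$ by $(\frak d\phi)^N$, the same $M$-definable relation as in $(M,\mu_p)$. The integration step is exactly the computation above with $\mu_q$ in place of $\mu_p$, using $\int\theta(x,\b)\,d\mu_q=q(\theta(x,\b))=(\frak d\theta)^N(\b)$. Since $M\preccurlyeq N$ in $L$, the values of $L(M)$-definable relations agree on tuples from $M$. Hence $\phi^{(M,\mu_p)}(\a)=\phi^{(N,\mu_q)}(\a)$ for all $\a\in M$, which gives $(M,\mu_p)\preccurlyeq(N,\mu_q)$.

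\emph{($\Leftarrow$)} Suppose $(M,\mu_p)\preccurlyeq(N,\mu_q)$. For each $L$-formula $\phi(x,\y)$ and each $k$, part (i) gives $L(M)$-formulas $\theta_k(\y)$ with $\sup_{\y}\bigl|\int\phi(x,\y)\,dx-\theta_k(\y)\bigr|\leqslant 1/k$ true in $(M,\mu_p)$. This is an $L_\mu(M)$-sentence with value at most $1/k$. By elementarity the same bound holds in $(N,\mu_q)$, so $q(\phi(x,\b))=\int\phi(x,\b)\,d\mu_q=\lim_k\theta_k^N(\b)=(\frak d\phi)^N(\b)$ for all $\b\in N$. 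Therefore $q$ is definable over $M$ by the scheme of $p$, hence is the non-forking extension.

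The main obstacle I expect is the precise identification of non-forking extensions with extensions definable by the same $M$-scheme in the affine stable setting. This means checking that $(\frak d\phi)^N$ is well defined in $N$ and that the definability characterisation from \cite{Ibarlucia2} applies. A secondary point is that the equivalence in (i) must be expressed by $L_\mu$-sentences with parameters, so that it transfers along $\preccurlyeq$; the $\sup_{\y}$-formulation above handles this.
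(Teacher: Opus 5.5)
Your proof is correct. For (i) and for the forward direction of (ii) it is the paper's argument: an induction whose only real step sends $\int\phi(x,\bar y)\,dx$ to $\mathfrak{d}R(\bar y)$, run in $(M,\mu_p)$ and, when $q$ is definable by the same scheme, simultaneously in $(N,\mu_q)$. You also spell out the uniform-limit argument that the paper compresses into ``$\mathfrak{d}$ extends to $M$-definable relations''.

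The converse of (ii) is where you take a different route.

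\emph{The paper's route.} It applies the charged isomorphism theorem to obtain $f:(M,\mu_p)^\wp\cong(N,\mu_q)^\wp$ fixing $M$ pointwise. The integral of $(M,\mu_p)^\wp$ induces the $M$-definable type $p^\wp$. The paper then transports this to $q^\wp=f(p^\wp)$ and descends from $q^\wp$ to $q$.

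\emph{Your route.} You transfer the approximate eliminations from (i) directly along $(M,\mu_p)\preccurlyeq(N,\mu_q)$. This immediately gives $q(\phi(x,\bar b))=(\mathfrak{d}\phi)^N(\bar b)$ for all $\bar b\in N$.

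\emph{Comparison.} Your argument needs only (i), the definition of $\mu_q$, and the definition of elementary embedding. It avoids two things the paper leaves implicit: producing an isomorphism of powermeans that fixes $M$, which means applying the isomorphism theorem with names for the elements of $M$; and justifying $f(p^\wp)=q^\wp$ and the passage from $q^\wp$ to $q$. The paper's route is chosen to showcase the Keisler--Shelah-type theorem of that section.

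\emph{The main obstacle you flag.} Identifying the non-forking extension with the extension definable over $M$ by the scheme of $p$ is exactly what the paper also takes for granted (``definable over $M$ and hence non-forking'').

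\emph{One small repair.} Absolute value is not an affine connective, so $\sup_{\bar y}\bigl|\int\phi\,dx-\theta_k(\bar y)\bigr|\leqslant 1/k$ is not an AL$_{\int}$-condition. State it instead as the pair of conditions $\sup_{\bar y}\bigl(\int\phi\,dx-\theta_k(\bar y)\bigr)\leqslant 1/k$ and $\sup_{\bar y}\bigl(\theta_k(\bar y)-\int\phi\,dx\bigr)\leqslant 1/k$. Each is an $L_\mu(M)$-condition and transfers along $\preccurlyeq$.

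The same splitting settles the well-definedness point you raised. Uniform Cauchyness of $(\theta_k)$ over $M$ passes to $N$, so $(\mathfrak{d}\phi)^N$ is well defined. It also shows that ``definable over $M$'' and ``definable by the scheme of $p$'' coincide for extensions of $p$.
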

\begin{proof}
(i) Let $\frak d$ be a definition scheme for $p$. We prove by induction that every
$L_\mu$-formula $\phi(\y)$ is equivalent to a $L(M)$-definable relation.
The main step is to prove the claim for $\int\phi(x,\y)dx$ assuming it holds for $\phi(x,\y)$.
Assume $(M,\mu_p)$ satisfies $\phi(x,\y)=R(x,\y)$ where $R$ is a $L(M)$-definable relation.
Then, for each $\a\in M$ $$\int\phi^M(x,\a)dx=\int R^M(x,\a)dx=p(R(x,\a))=(\frak dR)(\a).$$
So, in $(M,\mu_p)$,\ \ $\int\phi(x,\y)dx$ is equivalent to the relation $\frak dR(\y)$.

(ii) Assume $q$ is non-forking over $p$. Then, $\frak d$ above is a definition scheme for both $p$ and $q$.
So, the procedure of elimination of integral formulas is the same in $M$ and $N$.
In other words, in $(N,\mu_p)$, every $L_\mu$-formula is equivalent to a $L(M)$-definable relation.
In particular, $(M,\mu_p)\preccurlyeq(N,\mu_q)$.
Conversely assume $(M,\mu_p)\preccurlyeq(N,\mu_q)$. Then, for some $\wp$, there is an isomorphism
$f:(M,\mu_p)^{\wp}\rightarrow(N,\mu_q)^{\wp}$ where $f$ fixes $M$ pointwise.
Since $p^\wp$ is non-forking over $p$ and $q^\wp=f(p^\wp)$ is non-forking over $q$, we conclude that $q$
is non-forking over $p$.
\end{proof}

Proposition \ref{def-scheme1} shows that $p(x)$ and its non-forking extensions identify a complete affine theory in $L_\mu$.
Expanding it to the language $L_\mu(M)$, we obtain a theory which eliminates the integral quantifier,
i.e. every formula is equivalent to an integral-free formula.
\bigskip

\noindent{\bf Variations of integration logic}:\\
The essential parts of the integration logic, consisting of the compactness theorem and its consequences,
can be generalized in various ways. We suggest some ideas without giving further details.
The integral operator can be replaced with an arbitrary linear operator.
Let $L$ be a charged language as before. Instead of endowing the metric structure $M$ with a probability charge,
we endow it with a norm $1$ continuous linear operator $\Lambda:\mathbf{C}_b(M)\rightarrow\Rn$ with $\Lambda(1)=1$.
Formulas are defined as before and $$\big(\int\phi(\a,y)dy\big)^M=\Lambda(\phi^M(\a,y)).$$
The argument for ultramean construction is similar and affine compactness theorem is proved by using
the Hahn-Banach extension theorem instead of Corollary \ref{KR}. The logic AL$_{\int}$ explained before is then axiomatized
in the generalized framework by the axiom scheme $$\inf_y\phi(\x,y)\leqslant\int\phi(\x,y)dy.$$
It is also possible to replace unary operators with $n$-ary operators, i.e. operators of the form $\Lambda:{\mathbf C}_b(M^n)\rightarrow\Rn$.
In particular, unary charges may be replaced with $n$-dimensional charges and Proposition \ref{def-scheme1} is generalized to $n$-dimensional types.
To obtain a parallel result for types in first order theories, expansions of first order logic by ultrafilters like in \cite{Veloso} are needed.


\begin{thebibliography}{5}
\bibitem{Aliprantis} C.D. Aliprantis, O. Burkinshaw, \textit{Positive operators}, Springer (2006).
\bibitem{Aliprantis-Inf} C.D. Aliprantis, K.C. Border, \textit{Infinite dimensional analysis}, Springer, 2006.
\bibitem{ACMT} S.M. Bagheri, \textit{Elements of affine model theory},  arXiv:2408.03555v2.
\bibitem{Isomorphism} S.M. Bagheri, \textit{The isomorphism theorem for linear fragments of continuous logic},
Math. Log. Quart. 67, No.2, 193-205 (2021).
\bibitem{Bagheri-Pourmahd2} S.M. Bagheri, M. Pourmahdian, \textit{Continuous integration logic}, arXiv:1910.00191.
\bibitem{Bagheri-Pourmahd1} S.M. Bagheri, M. Pourmahdian, \textit{The logic of integration}, Arch. Math. Logic. 48 (5), 465-492 (2009).
\bibitem{Ibarlucia} I. Ben-Yaacov, T. Ibarluc\'{i}a, T. Tsankov,
\textit{Extremal models and direct integrals in affine logic}, Preprint arXiv:2407.13344 (2024).
\bibitem{Ibarlucia2} I. Ben-Yaacov, T. Ibarluc\'{i}a, \textit{Stability in affine logic}, Preprint arXiv:2503.10506v2 (2026).
\bibitem{Rao} K.P.S. Bhaskara Rao, M. Bhaskara Rao, \textit{Theory of charges}, Academic Press (1983).
\bibitem{Veloso} Carnielli, W. A. and P. A. S. Veloso, \textit{Ultrafilter Logic and Generic
Reasoning}, Gottlob, G., Leitsch, A. and Mundici, D. (eds.) Computational
Logic and Proof Theory (LNCS 1289): 34-53, Springer-Verlag, Berlin (1997).
\bibitem{Hoover} D.N. Hoover, Probability logic, Annals of Mathematical Logic 14 (1978) 287-313.
\bibitem{Keisler} H.J. Keisler, \emph{Probability quantifiers} in J. Barwise, S. Feferman, Model theoretic logic, Springer-Verlag (1985).
\end{thebibliography}
\end{document}